\numberwithin{equation}{section}
\theoremstyle{plain}
\newtheorem{theorem}{Theorem}[section]
\newtheorem{proposition}[theorem]{Proposition}
\newtheorem{lemma}[theorem]{Lemma}
\newtheorem{corollary}[theorem]{Corollary}
\newtheorem{claim}{Claim}
\theoremstyle{definition}
\theoremstyle{remark}
\renewcommand{\bar}{\overline}
\newcommand{\abs}[1]{\lvert#1\rvert}
\newcommand{\N}{\mathbb{N}}
\newcommand{\Z}{\mathbb{Z}}
\newcommand{\Q}{\mathbb{Q}}
\newcommand{\C}{\mathbb{C}}
\tikzstyle{vertex}=[circle, draw, fill=black, inner sep=0pt, minimum width=6pt]
\tikzstyle{vert}=[circle, draw=black, fill=white, inner sep=0pt, minimum width=6pt]
\tikzstyle{zc}=[circle, draw, fill=black, inner sep=0pt, minimum width=6pt]
\tikzstyle{pc}=[circle, draw=black, inner sep=1pt, minimum width=11pt, font=\tiny] 
\tikzstyle{nc}=[circle, draw=black, inner sep=1pt, minimum width=11pt, font=\tiny, style=dashed] 
\tikzstyle{pedge}=[draw,-]
\tikzstyle{wedge}=[draw,-,postaction={decorate}, decoration={markings,mark = at position 0.55 with {\arrow{stealth} } }]
\tikzstyle{dpedge}=[draw,-,postaction={decorate}]
\tikzstyle{wwedge}=[draw,-,postaction={decorate}, decoration={markings,mark = at position 0.5 with {\arrow{stealth}}, mark = at position 0.6 with {\arrow{stealth} } }]
\tikzstyle{wwedge2}=[draw,-,postaction={decorate}, decoration={markings,mark = at position 0.45 with {\arrow{stealth} }, mark = at position 0.65 with {\arrow{stealth} } }]
\tikzstyle{wwwedge}=[draw,-,postaction={decorate}, decoration={markings,mark = at position 0.65 with {\arrow{stealth} },mark = at position 0.45 with {\arrow{stealth} }, mark = at position 0.55 with {\arrow{stealth} } }]
\tikzstyle{wwwedge2}=[draw,-,postaction={decorate}, decoration={markings,mark = at position 0.7 with {\arrow{stealth} },mark = at position 0.4 with {\arrow{stealth} }, mark = at position 0.55 with {\arrow{stealth} } }]
\tikzstyle{wwwwedge}=[draw,-,postaction={decorate}, decoration={markings,mark = at position 0.7 with {\arrow{stealth} },mark = at position 0.4 with {\arrow{stealth} }, mark = at position 0.5 with {\arrow{stealth} }, mark = at position 0.6 with {\arrow{stealth} } }]
\tikzstyle{wwwnedge}=[draw,densely dashed,postaction={decorate}, decoration={markings,mark = at position 0.65 with {\arrow{stealth} },mark = at position 0.45 with {\arrow{stealth} }, mark = at position 0.55 with {\arrow{stealth} } }]
\tikzstyle{wwnedge}=[draw,densely dashed,postaction={decorate}, decoration={markings,mark = at position 0.5 with {\arrow{stealth} }, mark = at position 0.6 with {\arrow{stealth} } }]
\tikzstyle{wnedge}=[draw,densely dashed,postaction={decorate}, decoration={markings,mark = at position 0.55 with {\arrow{stealth} } }]
\tikzstyle{wnedge2}=[draw,densely dashed,postaction={decorate}, decoration={markings,mark = at position 0.6 with {\arrow{stealth} } }]
\tikzstyle{dnedge}=[draw,densely dashed,postaction={decorate}]
\tikzstyle{nedge}=[draw,densely dashed]
\tikzstyle{weight2}= [draw=white, fill=white, font=\scriptsize]
\tikzstyle{weight}= [font=\scriptsize]
\tikzstyle{empty}=[circle, draw=white, inner sep=2pt, fill=white, minimum width=4pt]
\tikzstyle{ghost}=[circle, draw=black, inner sep=1pt, style=densely dashed, minimum width=6pt, font=\tiny]
\tikzstyle{ghostc}=[circle, draw=black, inner sep=1pt, style=densely dashed, minimum width=10pt, font=\tiny]
\tikzstyle{dedge}=[draw,very thick,dotted]
\subjclass[2010]{Primary }
\title[Lehmer's conjecture for Hermitian matrices over $\Z[i{]}$ and $\Z[\omega{]}$]{ Lehmer's conjecture for Hermitian matrices \\ over the Eisenstein and Gaussian integers  }
\date{}
\author{Gary Greaves}
\address{Mathematics Department, Royal Holloway, Egham, Surrey, TW20 0EX, UK.}
 \email{g.greaves@rhul.ac.uk}
\author{Graeme Taylor}
\address{Heilbronn Institute for Mathematical Research, University of Bristol, School of Mathematics, Howard House, Queens Avenue, Bristol BS8 1SN, UK.}
\email{magdt@bristol.ac.uk}
\begin{document}
	
	\begin{abstract}
			We solve Lehmer's problem for a class of polynomials arising from Hermitian matrices over the Eisenstein and Gaussian integers, that is, we show that all such polynomials have Mahler measure at least Lehmer's number $\tau_0 = 1.17628\dots$.
	\end{abstract}
	
	\maketitle
		
	\section{Introduction}
	
	Let $f(z) = (z-\alpha_1)\dots(z-\alpha_n)$ be a monic integer polynomial of degree $n$.
	The \textbf{Mahler measure} \cite{Mahler:Measure1962} $M(f)$ of $f$ is defined as the product of the absolute value of the zeros of $f$ that lie outside the unit circle. 
	In symbols
	\[
		M(f) = \prod_{j=1}^n \max (1, \abs{\alpha_j}),
	\]
	and it is clear that $M(f) \geqslant 1$.
	Lehmer's question \cite{Lehmer:33Cyclo} asks, for any monic integer polynomial $f$, what is the smallest $M(f)$ such that $M(f) > 1$.
	Along with his question, Lehmer noted the polynomial
	\[
		L(z) = z^{10} + z^9 - z^7 - z^6 - z^5 - z^4 - z^3 + z + 1,
	\]
	whose larger zero is $\tau_0 = 1.17628\dots$, and moreover $M(L) = \tau_0$. 
	The conjecture is that $\tau_0$ is the smallest such Mahler measure, and, although it is not clear that Lehmer himself made the conjecture, it is nevertheless known as \textbf{Lehmer's conjecture}.
	The survey article by Smyth \cite{Smyth:MMsurvey08} gives an overview of the history of Lehmer's conjecture.
	
	Let $A$ be a Hermitian matrix over an imaginary quadratic integer ring $R$.
	We take $\chi_A(x) = \det(xI - A)$ to be the \textbf{characteristic polynomial} of $A$.
	The characteristic polynomial of $A$ is monic and it has integer coefficients.
	Following McKee and Smyth, given $A$ we define the \textbf{associated reciprocal polynomial} $R_A(z) = \chi_A(z+1/z)$.
	McKee and Smyth \cite{McKee:noncycISM09} proved Lehmer's conjecture to be true for the polynomials $R_A(z)$ with $A$ is an integer symmetric matrix.
	The second author \cite{GTay:Lehmer11} has shown the conjecture to be true for the polynomials $R_A(z)$ with $A$ is a Hermitian matrix over imaginary quadratic rings $\mathcal O_{\Q(\sqrt{-d})}$ for $d = 2$ and $d > 3$.
	In this paper, we extend this result by treating the remaining imaginary quadratic integer rings, namely, the Eisenstein and Gaussian integers. 
	\begin{theorem}\label{thm:mainresult}
		Let $A$ be a Hermitian matrix over the Eisenstein or Gaussian integers.
		Then either $M(R_A) = 1$ or $M(R_A) \geqslant \tau_0$.
	\end{theorem}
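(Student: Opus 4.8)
The plan starts from the observation that, since $A$ is Hermitian, its eigenvalues $\lambda_1,\dots,\lambda_n$ are real, and for each $j$ the two zeros $z$ of $R_A$ with $z+1/z=\lambda_j$ lie on the unit circle when $\abs{\lambda_j}\leqslant 2$ and otherwise consist of one zero inside and one outside. Hence $M(R_A)=1$ exactly when every eigenvalue of $A$ lies in $[-2,2]$ — call such a matrix \emph{cyclotomic} — and otherwise $M(R_A)=\prod_{\abs{\lambda_j}>2}g(\lambda_j)$, where $g(\lambda)=\tfrac12\bigl(\abs{\lambda}+\sqrt{\lambda^2-4}\bigr)$ is $>1$ and strictly increasing in $\abs{\lambda}$. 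So Theorem~\ref{thm:mainresult} is equivalent to: every non-cyclotomic Hermitian matrix $A$ over $R\in\{\Z[i],\Z[\omega]\}$ satisfies $M(R_A)\geqslant\tau_0$.

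Two reductions set up the argument. First, conjugating $A$ by a permutation matrix or by a diagonal matrix of units of $R$, and replacing $A$ by $-A$, leave the multiset $\{\abs{\lambda_j}\}$ unchanged and hence fix $M(R_A)$; I would work throughout up to this notion of \emph{equivalence}, which also lets one encode $A$ combinatorially (diagonal entries as ``charges'', off-diagonal entries as labelled edges, as in the charged-signed-graph formalism of McKee and Smyth \cite{McKee:noncycISM09}). Second, by Cauchy interlacing, if $B$ is a principal submatrix of $A$ then each eigenvalue of $B$ lying outside $[-2,2]$ is dominated in absolute value by a corresponding eigenvalue of $A$, so $M(R_B)\leqslant M(R_A)$. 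Consequently it suffices to prove $M(R_B)\geqslant\tau_0$ for every \emph{minimal non-cyclotomic} Hermitian matrix $B$ over $R$ — meaning $B$ is not cyclotomic but every proper principal submatrix of $B$ is — since every non-cyclotomic $A$ contains such a $B$, and a minimal non-cyclotomic matrix is necessarily connected.

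The argument then proceeds in four steps. (i) Invoke the classification of maximal connected cyclotomic Hermitian matrices over $\Z[i]$ and over $\Z[\omega]$: up to equivalence there are finitely many ``sporadic'' ones of bounded size, together with two infinite families of path- and cycle-type whose structure is completely explicit. (ii) Deduce that every minimal non-cyclotomic $B$ has bounded size, hence that there are finitely many up to equivalence: such a $B$ is obtained from a cyclotomic matrix on one fewer index by adjoining a single row and column, and since that cyclotomic part embeds in a maximal cyclotomic matrix while the adjunction destroys the cyclotomic property in every possible way, this forces the cyclotomic part — and hence $B$ — to be small; the two infinite families have to be handled directly, by showing that the first obstruction to extending such a path or cycle cyclotomically already appears after a bounded number of vertices. (iii) With a size bound and the (bounded) list of admissible entries available, enumerate by exhaustive computer search all minimal non-cyclotomic matrices over $\Z[i]$ and $\Z[\omega]$ up to equivalence. (iv) For each $B$ on the resulting list, form $R_B=\chi_B(z+1/z)$ and verify $M(R_B)\geqslant\tau_0$, with equality realised — as already over $\Z$ — by a matrix whose associated reciprocal polynomial is Lehmer's polynomial $L$.

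The main obstacle is step (ii). The Gaussian and Eisenstein integers are precisely the imaginary quadratic rings left untreated in \cite{GTay:Lehmer11}, the reason being that their unit groups are larger ($\abs{R^\times}=4$ and $6$, against $2$ for the other rings): this enlarges and multiplies the maximal cyclotomic matrices, makes the charged-graph bookkeeping in the growing argument of step (ii) considerably heavier, and makes the infinite-family analysis — together with the certification that the search in step (iii) is genuinely exhaustive — the substantive part of the proof. Steps (i) and (iv) should then be, respectively, a matter of citation and of a large but essentially routine finite computation.
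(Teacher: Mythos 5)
Your proposal is correct and follows essentially the same route as the paper: reduce via interlacing to minimal non-cyclotomic matrices, invoke the classification of maximal connected cyclotomic graphs over $\Z[i]$ and $\Z[\omega]$, bound the size of minimal non-cyclotomic graphs (the paper's Proposition~\ref{pro:unchargedRed} handles the infinite families by showing any large graph all of whose proper subgraphs embed in non-sporadics is itself cyclotomic, so large minimal examples must be ``supersporadic'' and hence bounded by the sporadic maximal cyclotomics), and finish with an exhaustive computation. You have also correctly identified step (ii) as the substantive part of the argument.
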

	
	Smyth~\cite{Smyth:nonrecip71} showed that the smallest Mahler measure of a non-reciprocal monic integer polynomial is $1.32471\dots$.
	Hence, with regard to Lehmer's problem, one is only concerned with reciprocal polynomials.
	In his thesis~\cite{Greaves:thesis12}, the first author has shown that Lehmer's conjecture holds for polynomials $R_A$ where $A$ is a Hermitian matrix over a real quadratic integer ring.
	Hence, it remains to investigate Lehmer's conjecture for monic reciprocal integer polynomials that cannot be expressed as $R_A$ for some Hermitian matrix $A$ over a quadratic integer ring.

	\section{Preliminaries}
	
	\subsection{Equivalence and indecomposability}

	Write $M_n(R)$ for the ring of $n \times n$ matrices over a ring $R \subseteq \C$. 
	Let $U_n(R)$ denote the unitary group of matrices $Q$ in $M_n(R)$ which satisfy $QQ^* = Q^*Q = I$, where $Q^*$ denotes the Hermitian transpose of $Q$.
	Conjugation of a matrix $M \in M_n(R)$ by a matrix in $U_n(R)$ preserves the eigenvalues of $M$ and the base ring $R$.
	
	We can view a Hermitian matrix $A \in M_n(R)$ as a graph in the following way.
	Define an \textbf{$R$-graph} to be a weighted directed graph with weight function $w : V \times V \to R$ where $V$ is the set of vertices of the graph.
	Let $G$ be the $R$-graph corresponding to $A$ whose vertices are the rows of $A$ and the weight function $w$ is given by $w(u,v) = A_{uv}$ for $u,v \in V$.
	The vertices $u, v$ are adjacent if $w(u,v)$ is nonzero, and $w(u,v)$ is called the \textbf{edge-weight} of the edge $uv$.
	A vertex $v$ is called \textbf{charged} with charge $w(v,v)$ if $w(v,v)$ is nonzero, otherwise $v$ is called \textbf{uncharged} and a graph without charged vertices is called \textbf{uncharged} otherwise it is called \textbf{charged}.
	By a subgraph $H$ of $G$, we mean an induced subgraph and we say that $G$ \textbf{contains} $H$ and that $G$ is a \textbf{supergraph} of $H$.
	
	Now, for $R$ an imaginary integer ring, $U_n(R)$ is generated by permutation matrices and diagonal matrices of the form
	\[
		\operatorname{diag}(1,\dots,1,\mu,1,\dots,1),
	\]
	where $\mu$ is a unit in $R$.
	Let $D$ be such a diagonal matrix having $\mu$ in the $j$-th position.
	Conjugation by $D$ is called a $\mu$-\textbf{switching} at vertex $j$.
	This has the effect of multiplying all the out-neighbour edge-weights of $j$ by $\mu$ and all the in-neighbour edge-weights of $j$ by $\bar \mu$.
	The effect of conjugation by permutation matrices is just a relabelling of the vertices of the corresponding graph.
	Let $L$ be the Galois closure of the field generated by the elements of $R$ over $\Q$.
	The case of interest for us is when $R = \Z[i]$ or $R = \Z[\omega]$.
	We have the nice property that a Hermitian matrix $A$ over an imaginary quadratic integer ring $\mathcal O_{\Q(\sqrt{d})}$ has an integral characteristic polynomial.
	This follows from the fact that the nontrivial Galois automorphism $\sigma$ of $\Q(\sqrt{d})$ over $\Q$ is complex conjugation.
	Applying $\sigma$ to the coefficients of $\chi_A$ gives $\sigma(\chi_A(x)) = \det(xI - \sigma(A)) = \det(xI - A^\top) = \chi_A(x)$.
	Hence the coefficients of $\chi_A$ lie in the intersection of $\Q$ and $\mathcal O_{\Q(\sqrt{d})}$ which is $\Z$.
	Let $A$ and $B$ be two matrices in $M_n(R)$.
	We say that $A$ is \textbf{strongly equivalent} to $B$ if $A = \sigma(QBQ^*)$ for some $Q \in U_n(R)$ and some $\sigma \in \operatorname{Gal}(L/\Q)$, where $\sigma$ is applied componentwise to $QBQ^*$.
	The matrices $A$ and $B$ are merely called \textbf{equivalent} if $A$ is strongly equivalent to $\pm B$.
	The notions of equivalence and strong equivalence carry through to graphs in the natural way and, since all possible labellings of a graph are strongly equivalent, we do not need to label the vertices.
	
	Fundamental to our approach is the following result of Cauchy \cite{Hwang:Interlace04}; an even shorter proof is given by Fisk \cite{Fisk:Interlace05}.

	\begin{theorem}[Interlacing] \label{thm:interlacing} Let $A$ be an $n~\times~n$ Hermitian matrix with eigenvalues $\lambda_1 \leqslant \dots \leqslant \lambda_n$. Let $B$ be an $(n - 1)~\times~(n - 1)$ principal submatrix of $A$ with eigenvalues $\mu_1 \leqslant \dots \leqslant \mu_{n-1}$. Then the eigenvalues of $A$ and $B$ interlace. Namely,
		\[
			\lambda_1 \leqslant \mu_1 \leqslant \lambda_2 \leqslant \mu_2 \leqslant \dots \leqslant \mu_{n-1} \leqslant \lambda_n.
		\]
	\end{theorem}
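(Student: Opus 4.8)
The plan is to derive the interlacing inequalities from the Courant--Fischer min-max characterisation of the eigenvalues of a Hermitian matrix. Recall that for an $n \times n$ Hermitian matrix $A$ with eigenvalues $\lambda_1 \leqslant \dots \leqslant \lambda_n$ and each $k \in \{1, \dots, n\}$ one has
\[
	\lambda_k = \min_{\dim S = k} \ \max_{0 \neq x \in S} \frac{x^* A x}{x^* x} = \max_{\dim S = n-k+1} \ \min_{0 \neq x \in S} \frac{x^* A x}{x^* x},
\]
where $S$ ranges over the subspaces of $\C^n$ of the indicated dimension. First I would reduce to the case where $B$ is obtained from $A$ by deleting the last row and column: this is harmless, since conjugating $A$ by a permutation matrix permutes its diagonal entries while fixing the eigenvalues, and every $(n-1) \times (n-1)$ principal submatrix arises this way. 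Identifying $\C^{n-1}$ with the subspace $W = \{x \in \C^n : x_n = 0\}$, one checks that for $0 \neq x \in W$ the Rayleigh quotient $x^* A x / x^* x$ equals $\hat x^* B \hat x / \hat x^* \hat x$, where $\hat x \in \C^{n-1}$ is $x$ with its final (zero) coordinate deleted.

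For the lower bounds $\lambda_k \leqslant \mu_k$, apply the min form of Courant--Fischer to $B$, giving $\mu_k = \min_{S \subseteq W,\, \dim S = k} \max_{0 \neq x \in S} x^* A x / x^* x$; since this minimum is over a subfamily of all $k$-dimensional subspaces of $\C^n$, it is at least the corresponding minimum over that larger family, which is $\lambda_k$. For the upper bounds $\mu_k \leqslant \lambda_{k+1}$, apply the max form to $B$, giving $\mu_k = \max_{S \subseteq W,\, \dim S = n-k} \min_{0 \neq x \in S} x^* A x / x^* x$; this is a maximum over a subfamily of all $(n-k)$-dimensional subspaces of $\C^n$, hence at most the corresponding maximum over the larger family, which equals $\lambda_{k+1}$. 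Taking these two chains of inequalities together for $k = 1, \dots, n-1$ gives $\lambda_1 \leqslant \mu_1 \leqslant \lambda_2 \leqslant \dots \leqslant \mu_{n-1} \leqslant \lambda_n$, as required.

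The only real content is the Courant--Fischer theorem, which I would either quote or establish in a couple of lines from the spectral decomposition $A = \sum_{j} \lambda_j u_j u_j^*$ with $\{u_j\}$ orthonormal: any $k$-dimensional subspace $S$ intersects $\operatorname{span}(u_k, \dots, u_n)$ nontrivially by a dimension count, which forces $\max_{0 \neq x \in S} x^* A x / x^* x \geqslant \lambda_k$, with equality attained by $S = \operatorname{span}(u_1, \dots, u_k)$; the max form follows by the symmetric argument. An alternative, closer to Cauchy's original reasoning, writes $\chi_A(x) = (x - a_{nn})\chi_B(x) - v^* \operatorname{adj}(xI - B) v$, where $v$ is the off-diagonal part of the last column of $A$, notes that $v^*(xI-B)^{-1}v = \sum_j \abs{c_j}^2 / (x - \mu_j)$ is strictly decreasing between consecutive poles, and deduces interlacing of the roots of $\chi_A$ first when $B$ has simple eigenvalues and then in general by a perturbation argument. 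I expect the bookkeeping for repeated eigenvalues to be the fiddliest part of that second approach, which is why I would favour the min-max proof above.
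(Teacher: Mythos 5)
Your proof is correct. Note, though, that the paper does not actually prove Theorem~\ref{thm:interlacing}: it quotes the result as classical, attributing it to Cauchy and pointing to \cite{Hwang:Interlace04} and to Fisk's short proof \cite{Fisk:Interlace05}, so any complete argument is already ``more'' than the paper supplies. Your main route --- Courant--Fischer applied to $B$ viewed as the compression of $A$ to the coordinate hyperplane $W$, with $\lambda_k \leqslant \mu_k$ from the min form (minimising over the smaller family of $k$-dimensional subspaces of $W$) and $\mu_k \leqslant \lambda_{k+1}$ from the max form (maximising over $(n-k)$-dimensional subspaces of $W$, versus dimension $n-(k+1)+1 = n-k$ for $A$) --- is the standard self-contained argument, and the indices check out; the reduction to deleting the last row and column via a permutation conjugation is also fine. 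Your sketched alternative, expanding $\chi_A(x) = (x-a_{nn})\chi_B(x) - v^{*}\operatorname{adj}(xI-B)v$ and using the monotonicity of $\sum_j \abs{c_j}^2/(x-\mu_j)$ between poles, is essentially the characteristic-polynomial route of the cited references; as you say, it needs a perturbation step to handle repeated eigenvalues of $B$ (or vanishing $c_j$), which is exactly the bookkeeping the min-max proof avoids. Either version would serve; the min-max one is the cleaner choice for a write-up.
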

	
	A matrix that is equivalent to a block diagonal matrix of more than one block is called \textbf{decomposable}, otherwise it is called \textbf{indecomposable}.
	The \textbf{underlying graph} of a Hermitian matrix $A$ is one whose vertices are the rows of $A$ and vertices $i$ and $j$ are adjacent if and only if $A_{ij}$ is nonzero. 
	A matrix is indecomposable if and only if its underlying graph is connected.
	The eigenvalues of a decomposable matrix are found by pooling together the eigenvalues of its blocks.
	
	A Hermitian matrix $A$ is called \textbf{cyclotomic} if the zeros of its characteristic polynomial $\chi_A$ are contained inside the interval $[-2,2]$.
	An indecomposable cyclotomic matrix that is not a principal submatrix of any other indecomposable cyclotomic matrix is called a \textbf{maximal} indecomposable cyclotomic matrix.
	The corresponding graph is called a maximal connected cyclotomic graph.
	Let $R$ be an imaginary quadratic integer ring.
	The authors \cite{GTay:cyclos10,Greaves:CycloEG11} have shown that for each connected cyclotomic $R$-graph there exists a maximal connected cyclotomic $R$-graph containing it.
	
	\subsection{Graphical interpretation}
	
	The language of graphs has already been alluded to in the previous section.
	We use graphs as a convenient way of representing an equivalence class of Hermitian matrices and we describe our drawing conventions below.
	In this paper we are only concerned with $R$-graphs where $R = \Z[i]$ and $R = \Z[\omega]$ where $\omega = 1/2 + \sqrt{-3}/2$.
	Vertices, if uncharged, are drawn as filled discs \tikz {\node[zc] {};}, if charged with positive charge $c$, vertices are drawn as $c$ with a solid circular outline \tikz {\node[pc] {$c$};}, and otherwise, if charged negatively with $-c$, vertices are drawn as $c$ with a dashed circular outline \tikz {\node[nc] {$c$};}.
	We use \tikz {\node[pc] {$\ast$};} to represent a vertex having any charge; all vertices are equivalent to \tikz {\node[pc] {$\ast$};}.
	Edges are drawn in accordance with Tables~\ref{tab:ziedge}~and~\ref{tab:zwedge} which describe edges for $\Z[i]$-graphs and $\Z[\omega]$-graphs respectively.
	For edges with a real edge-weight, the direction of the edge does not matter, and so to reduce clutter, we do not draw arrows for these edges. 
	For all other edges, the number of arrowheads reflects the norm of the edge-weight.
	\begin{table}[htbp]
		\begin{center}
		\begin{tabular}{c | c}
			Edge-weight & Visual representation \\
			\hline
			$1$ & \tikz { \path[pedge] (0,0) -- (1,0); } \\
			$-1$ & \tikz { \path[nedge] (0,0) -- (1,0); } \\
			$i$ & \tikz { \path[wedge] (0,0) -- (1,0); } \\
			$-i$ & \tikz { \path[wnedge] (0,0) -- (1,0); } \\
			$1 + i$ & \tikz { \path[wwedge] (0,0) -- (1,0); } \\
			$-1 - i$ & \tikz { \path[wwnedge] (0,0) -- (1,0); } \\
			$2$ & \tikz { \path[pedge] (0,0) -- node[weight2] {$2$} (1,0); }
		\end{tabular}
		\end{center}
		\caption{Edge drawing convention for $\Z[i]$-graphs.}
		\label{tab:ziedge}
	\end{table}
	\begin{table}[htbp]
		\begin{center}
		\begin{tabular}{c | c}
			Edge-weight & Visual representation \\
			\hline
			$1$ & \tikz { \path[pedge] (0,0) -- (1,0); } \\
			$-1$ & \tikz { \path[nedge] (0,0) -- (1,0); } \\
			$\omega$ & \tikz { \path[wedge] (0,0) -- (1,0); } \\
			$-\omega$ & \tikz { \path[wnedge] (0,0) -- (1,0); } \\
			$1 + \omega$ & \tikz { \path[wwwedge] (0,0) -- (1,0); } \\
			$-1 - \omega$ & \tikz { \path[wwwnedge] (0,0) -- (1,0); } \\
			$2$ & \tikz { \path[pedge] (0,0) -- node[weight2] {$2$} (1,0); }
		\end{tabular}
		\end{center}
		\caption{Edge drawing convention for $\Z[\omega]$-graphs.}
		\label{tab:zwedge}
	\end{table}
	The edge labelling \tikz { \path[pedge] (0,0) -- node[weight2] {$\ast$} (1,0); } represents an edge having any weight; all edges are equivalent to the edge \tikz { \path[pedge] (0,0) -- node[weight2] {$\ast$} (1,0); }.
	
	To ease language, by a graph we mean an $R$-graph for some undisclosed quadratic integer ring $R$.	
	
	\section{Cyclotomic matrices over the Eisenstein and Gaussian integers}
	
	Our main result in this paper uses the classification of cyclotomic matrices over the Eisenstein and Gaussian integers.
	We describe this classification below.
	
	\begin{lemma}\label{lem:cycclass}\cite{Greaves:CycloEG11}
		Up to equivalence, every indecomposable cyclotomic $\Z[i]$-matrix and every indecomposable cyclotomic $\Z[\omega]$-matrix is the adjacency matrix of a subgraph of one of the maximal graphs in Figures~\ref{fig:maxcycs1}, \ref{fig:maxcycs2}, \ref{fig:maxcycs3}, \ref{fig:maxcycs4}, \ref{fig:maxcycs5}, \ref{fig:maxcycs6}, \ref{fig:maxcycs8}, and \ref{fig:maxcycs10}.
	\end{lemma}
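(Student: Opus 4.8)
The plan is to adapt the combinatorial ``growing'' strategy used by McKee and Smyth \cite{McKee:noncycISM09} (and by the authors in the quadratic setting \cite{GTay:cyclos10}) to the rings $\Z[i]$ and $\Z[\omega]$. First I would extract the local constraints forced by cyclotomicity via the Interlacing Theorem~\ref{thm:interlacing}: since every principal submatrix of a cyclotomic matrix is again cyclotomic, it suffices to examine small principal submatrices. A charged vertex $v$ contributes the $1\times1$ submatrix $(w(v,v))$, so $\abs{w(v,v)}\leqslant 2$; combined with the fact that $w(v,v)$ is a real element of the ring, hence an integer, this leaves finitely many possible charges. A $2\times2$ principal submatrix $\left(\begin{smallmatrix} a & \beta \\ \bar\beta & b \end{smallmatrix}\right)$ is cyclotomic only when its trace and determinant lie in prescribed ranges, which bounds $\abs{\beta}^2$; over $\Z[i]$ this restricts each edge-weight (up to unit-switching) to $\{\pm1,\pm i,\pm(1+i),\pm2\}$, and analogously over $\Z[\omega]$, exactly the lists in Tables~\ref{tab:ziedge} and~\ref{tab:zwedge}. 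Passing to $3\times3$ submatrices then eliminates most triangles and multi-edge configurations and shows that the presence of a norm-$2$ edge or a nonzero charge forces the graph to be very small.

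Next I would reduce the classification to a finite search. The crucial quantitative input is an explicit bound on the order of an indecomposable cyclotomic $R$-graph for $R=\Z[i]$ and $R=\Z[\omega]$: one produces a finite family of ``obstruction'' subgraphs (suitable paths, cycles, and charged stars) whose characteristic polynomials already have a zero outside $[-2,2]$, and then argues by interlacing that any sufficiently large indecomposable graph must contain one of them and hence cannot be cyclotomic. Together with the finitely many admissible edge-weights and charges from the first step, this confines us to a finite, if large, combinatorial problem.

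I would then run the standard supergraph/growing enumeration: beginning from the small indecomposable cyclotomic graphs, repeatedly attempt to adjoin a new vertex using every admissible choice of edges and charge, retain exactly those extensions whose adjacency matrix still has all eigenvalues in $[-2,2]$, and quotient out by the symmetries generated by permutations, unit-switchings, negation, and $\operatorname{Gal}(L/\Q)$. The order bound guarantees termination; the graphs admitting no cyclotomic extension are precisely the maximal connected cyclotomic graphs, and one verifies that these are exactly the graphs of Figures~\ref{fig:maxcycs1}, \ref{fig:maxcycs2}, \ref{fig:maxcycs3}, \ref{fig:maxcycs4}, \ref{fig:maxcycs5}, \ref{fig:maxcycs6}, \ref{fig:maxcycs8}, and \ref{fig:maxcycs10}, with every indecomposable cyclotomic graph appearing as an induced subgraph of one of them. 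The companion assertion that every connected cyclotomic $R$-graph embeds in a maximal one is then immediate, since the order bound forbids an infinite ascending chain, so any cyclotomic graph can be grown to a maximal one.

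The main obstacle I expect is twofold. First, one needs an order bound sharp enough that the enumeration is actually feasible rather than merely finite; a crude bound renders the search intractable, so the obstruction subgraphs must be chosen with care. Second, the equivalence reduction has to be organised so that the (largely computer-assisted) case analysis is simultaneously exhaustive and manageable --- in particular handling the additional Galois symmetry not present over $\Z$, and accounting for the several sporadic maximal graphs that do not belong to an infinite family.
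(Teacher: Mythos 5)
The paper does not prove this lemma at all: it is imported verbatim from \cite{Greaves:CycloEG11}, so there is no internal proof to compare against. Judged on its own merits, your outline captures the right general machinery for such classifications (interlacing applied to $1\times1$, $2\times2$ and $3\times3$ principal submatrices to bound charges and edge-weights, followed by a growing enumeration reduced modulo switching, permutation, negation and Galois conjugation), and that is broadly how the cited reference and its predecessors proceed.

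However, there is a genuine gap in your second step. You propose to ``produce a finite family of obstruction subgraphs \dots and then argue by interlacing that any sufficiently large indecomposable graph must contain one of them and hence cannot be cyclotomic,'' and you later rely on this order bound both to guarantee termination of the growing algorithm and to rule out an infinite ascending chain. No such bound exists: Figures~\ref{fig:maxcycs1}--\ref{fig:maxcycs4} exhibit indecomposable cyclotomic graphs ($T_{2k}$, $T_{2k}^{(x)}$, $C_{2k}$, $C_{2k}^{\pm\pm}$, $C_{2k+1}$) of every sufficiently large order, so the classification cannot be reduced to a single finite search terminated by an order bound. What is actually bounded is only the order of the \emph{sporadic} maximal graphs (at most $16$) and of the maximal graphs containing special features such as a norm-$2$ edge or a charge pattern incompatible with the infinite families. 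The correct architecture is therefore two-tiered: a finite, feature-restricted search for the sporadics, plus a separate structural argument (of the kind this paper carries out for minimal non-cyclotomic graphs in Proposition~\ref{pro:unchargedRed}, using profiles and excluded subgraphs) showing that every sufficiently large connected cyclotomic graph embeds into one of the infinite toral or cylindrical families. Without that structural step your enumeration neither terminates nor proves maximality of the $T_{2k}$-type graphs, and the ``companion assertion'' that every connected cyclotomic graph lies in a maximal one does not follow from a nonexistent chain condition.
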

	
	\begin{figure}[htbp]
		\centering
			\begin{tikzpicture}[auto, scale=1.5]
				\begin{scope}
					\foreach \type/\pos/\name in {{vertex/(0,0)/a2}, {vertex/(0,1)/a1}, {vertex/(1,1)/b1}, {vertex/(1,0)/b2}, {vertex/(2,0)/e2}, {vertex/(2,1)/e1}, {empty/(2.6,1)/b11}, {empty/(2.6,0)/b21}, {empty/(2.4,0.6)/b12}, {empty/(2.4,0.4)/b22}, {empty/(3.4,1)/c11}, {empty/(3.4,0)/c21}, {empty/(3.6,0.6)/c12}, {empty/(3.6,0.4)/c22}, {vertex/(4,1)/c1}, {vertex/(4,0)/c2}, {vertex/(5,1)/d1}, {vertex/(5,0)/d2}, {vertex/(6,1)/f1}, {vertex/(6,0)/f2}}
						\node[\type] (\name) at \pos {};
					\foreach \pos/\name in {{(3,0.5)/\dots}, {(-.3,1)/A}, {(-0.3,0)/B}, {(6.3,1)/A}, {(6.3,0)/B}}
						\node at \pos {$\name$};
					\foreach \edgetype/\source/ \dest/\weight in {nedge/b1/a2/{}, pedge/a1/b1/{}, pedge/a1/b2/{}, nedge/a2/b2/{}, nedge/e1/b2/{}, pedge/b1/e1/{}, pedge/b1/e2/{}, nedge/b2/e2/{}, nedge/b21/e2/{}, pedge/e1/b11/{}, pedge/e1/b12/{}, nedge/e2/b22/{}, pedge/c11/c1/{}, nedge/c12/c1/{}, pedge/c22/c2/{}, nedge/c21/c2/{}, nedge/d1/c2/{}, pedge/c1/d1/{}, pedge/c1/d2/{}, nedge/c2/d2/{}, pedge/d1/f1/{}, nedge/d2/f2/{}}
						\path[\edgetype] (\source) -- node[weight] {$\weight$} (\dest);
				\end{scope}
				\begin{scope}
					\foreach \edgetype/\source/ \dest/\weight in {nedge/d2/f1/{}, pedge/d1/f2/{}}
						\path[\edgetype] (\source) -- node[weight] {$\weight$} (\dest);
				\end{scope}
			\end{tikzpicture}
			\begin{tikzpicture}[auto, scale=1.5]
				\begin{scope}
					\foreach \type/\pos/\name in {{vertex/(0,0)/a2}, {vertex/(0,1)/a1}, {vertex/(1,1)/b1}, {vertex/(1,0)/b2}, {vertex/(2,0)/e2}, {vertex/(2,1)/e1}, {empty/(2.6,1)/b11}, {empty/(2.6,0)/b21}, {empty/(2.4,0.6)/b12}, {empty/(2.4,0.4)/b22}, {empty/(3.4,1)/c11}, {empty/(3.4,0)/c21}, {empty/(3.6,0.6)/c12}, {empty/(3.6,0.4)/c22}, {vertex/(4,1)/c1}, {vertex/(4,0)/c2}, {vertex/(5,1)/d1}, {vertex/(5,0)/d2}, {vertex/(6,1)/f1}, {vertex/(6,0)/f2}}
						\node[\type] (\name) at \pos {};
					\foreach \pos/\name in {{(3,0.5)/\dots}, {(-.3,1)/A}, {(-0.3,0)/B}, {(6.3,1)/A}, {(6.3,0)/B}}
						\node at \pos {$\name$};
					\foreach \edgetype/\source/ \dest/\weight in {nedge/b1/a2/{}, pedge/a1/b1/{}, pedge/a1/b2/{}, nedge/a2/b2/{}, nedge/e1/b2/{}, pedge/b1/e1/{}, pedge/b1/e2/{}, nedge/b2/e2/{}, nedge/b21/e2/{}, pedge/e1/b11/{}, pedge/e1/b12/{}, nedge/e2/b22/{}, pedge/c11/c1/{}, nedge/c12/c1/{}, pedge/c22/c2/{}, nedge/c21/c2/{}, nedge/d1/c2/{}, pedge/c1/d1/{}, pedge/c1/d2/{}, nedge/c2/d2/{}, wedge/d1/f1/{}, wnedge/d2/f2/{}}
						\path[\edgetype] (\source) -- node[weight] {$\weight$} (\dest);
				\end{scope}
				\begin{scope}[decoration={markings,mark = at position 0.4 with {\arrow{stealth} } }]
					\foreach \edgetype/\source/ \dest/\weight in {dnedge/d2/f1/{}, dpedge/d1/f2/{}}
						\path[\edgetype] (\source) -- node[weight] {$\weight$} (\dest);
				\end{scope}
			\end{tikzpicture}
		\caption{The infinite families $T_{2k}$ and $T^{(x)}_{2k}$ (respectively) of $2k$-vertex maximal connected cyclotomic $\Z[x]$-graphs, for $k \geqslant 3$ and $x \in \{i, \omega \}$. (The two copies of vertices $A$ and $B$ should be identified to give a toral tessellation.) }
		\label{fig:maxcycs1}
	\end{figure}
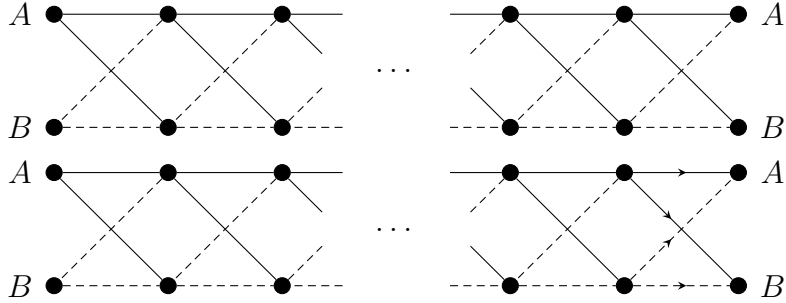

	\begin{figure}[htbp]
		\centering
			\begin{tikzpicture}[scale=1.5, auto]
				\begin{scope}
					\foreach \type/\pos/\name in {{vertex/(1,1)/b1}, {vertex/(1,0)/b2}, {vertex/(2,0)/e2}, {vertex/(2,1)/e1}, {empty/(2.6,1)/b11}, {empty/(2.6,0)/b21}, {empty/(2.4,0.6)/b12}, {empty/(2.4,0.4)/b22}, {empty/(3.4,1)/c11}, {empty/(3.4,0)/c21}, {empty/(3.6,0.6)/c12}, {empty/(3.6,0.4)/c22}, {vertex/(4,1)/c1}, {vertex/(4,0)/c2}, {vertex/(5,1)/d1}, {vertex/(5,0)/d2}}
						\node[\type] (\name) at \pos {};
					\foreach \type/\pos/\name in {{vertex/(0,0.5)/bgn}, {vertex/(6,0.5)/end}}
						\node[\type] (\name) at \pos {};
					\foreach \pos/\name in {{(3,0.5)/\dots}}
						\node at \pos {\name};
					\foreach \edgetype/\source/ \dest in {nedge/e1/b2, pedge/b1/e1, pedge/b1/e2, nedge/b2/e2, nedge/b21/e2, pedge/e1/b11, pedge/e1/b12, nedge/e2/b22, pedge/c11/c1, nedge/c12/c1, pedge/c22/c2, nedge/c21/c2, nedge/d1/c2, pedge/c1/d1, pedge/c1/d2, nedge/c2/d2}
						\path[\edgetype] (\source) -- (\dest);
					\foreach \edgetype/\source/\dest in {wwedge/b1/bgn, wwedge/b2/bgn, wwnedge/d2/end, wwedge/d1/end}
						\path[\edgetype] (\source) -- (\dest);
				\end{scope}
			\end{tikzpicture}
		\caption{The infinite family of $2k$-vertex maximal connected cyclotomic $\Z[i]$-graphs $C_{2k}$ for $k \geqslant 2$.}
		\label{fig:maxcycs2}
	\end{figure}

	\begin{figure}[htbp]
		\centering
			\begin{tikzpicture}[scale=1.5, auto]
					\foreach \type/\pos/\name in {{vertex/(1,1)/b1}, {vertex/(1,0)/b2}, {vertex/(2,0)/e2}, {vertex/(2,1)/e1}, {empty/(2.6,1)/b11}, {empty/(2.6,0)/b21}, {empty/(2.4,0.6)/b12}, {empty/(2.4,0.4)/b22}, {empty/(3.4,1)/c11}, {empty/(3.4,0)/c21}, {empty/(3.6,0.6)/c12}, {empty/(3.6,0.4)/c22}, {vertex/(4,1)/c1}, {vertex/(4,0)/c2}, {vertex/(5,1)/d1}, {vertex/(5,0)/d2}}
						\node[\type] (\name) at \pos {};
					\foreach \type/\pos/\name in {{pc/(0,0)/a2}, {pc/(0,1)/a1}, {pc/(6,1)/f1}, {pc/(6,0)/f2}}
						\node[\type] (\name) at \pos {$1$};
					\foreach \pos/\name in {{(3,0.5)/\dots}}
						\node at \pos {$\name$};
					\foreach \edgetype/\source/ \dest in {pedge/a1/a2, nedge/b1/a2, pedge/a1/b1, pedge/a1/b2, nedge/a2/b2, nedge/e1/b2, pedge/b1/e1, pedge/b1/e2, nedge/b2/e2, nedge/b21/e2, pedge/e1/b11, pedge/e1/b12, nedge/e2/b22, pedge/c11/c1, nedge/c12/c1, pedge/c22/c2, nedge/c21/c2, nedge/d1/c2, pedge/c1/d1, pedge/c1/d2, nedge/c2/d2, nedge/f1/d2, pedge/d1/f1, pedge/d1/f2, nedge/d2/f2, nedge/f1/f2}
						\path[\edgetype] (\source) -- (\dest);
					\node at (3,-0.2) {};
				\end{tikzpicture}
				\begin{tikzpicture}[scale=1.5, auto]
					\foreach \type/\pos/\name in {{vertex/(1,1)/b1}, {vertex/(1,0)/b2}, {vertex/(2,0)/e2}, {vertex/(2,1)/e1}, {empty/(2.6,1)/b11}, {empty/(2.6,0)/b21}, {empty/(2.4,0.6)/b12}, {empty/(2.4,0.4)/b22}, {empty/(3.4,1)/c11}, {empty/(3.4,0)/c21}, {empty/(3.6,0.6)/c12}, {empty/(3.6,0.4)/c22}, {vertex/(4,1)/c1}, {vertex/(4,0)/c2}, {vertex/(5,1)/d1}, {vertex/(5,0)/d2}}
						\node[\type] (\name) at \pos {};
					\foreach \type/\pos/\name in {{pc/(0,0)/a2}, {pc/(0,1)/a1}, {nc/(6,1)/f1}, {nc/(6,0)/f2}}
						\node[\type] (\name) at \pos {$1$};
					\foreach \pos/\name in {{(3,0.5)/\dots}}
						\node at \pos {$\name$};
					\foreach \edgetype/\source/ \dest in {pedge/a1/a2, nedge/b1/a2, pedge/a1/b1, pedge/a1/b2, nedge/a2/b2, nedge/e1/b2, pedge/b1/e1, pedge/b1/e2, nedge/b2/e2, nedge/b21/e2, pedge/e1/b11, pedge/e1/b12, nedge/e2/b22, pedge/c11/c1, nedge/c12/c1, pedge/c22/c2, nedge/c21/c2, nedge/d1/c2, pedge/c1/d1, pedge/c1/d2, nedge/c2/d2, nedge/f1/d2, pedge/d1/f1, pedge/d1/f2, nedge/d2/f2, pedge/f1/f2}
						\path[\edgetype] (\source) -- (\dest);
			\end{tikzpicture}
		\caption{The infinite families of $2k$-vertex maximal connected cyclotomic $\Z$-graphs $C_{2k}^{++}$ and $C_{2k}^{+-}$ for $k \geqslant 2$.}
		\label{fig:maxcycs3}
	\end{figure}
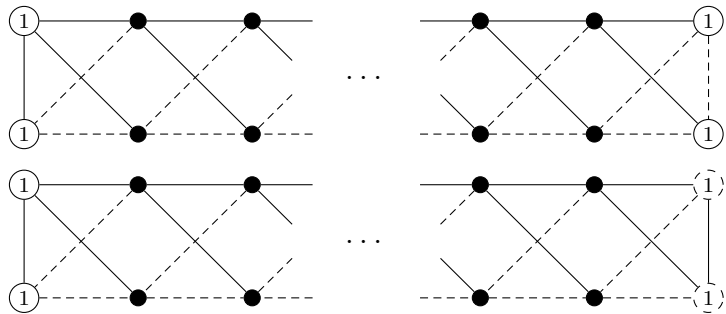

	\begin{figure}[htbp]
		\centering
			\begin{tikzpicture}[scale=1.5, auto]
				\begin{scope}
					\foreach \type/\pos/\name in {{vertex/(0,0.5)/bgn}, {vertex/(1,1)/b1}, {vertex/(1,0)/b2}, {vertex/(2,0)/e2}, {vertex/(2,1)/e1}, {empty/(2.6,1)/b11}, {empty/(2.6,0)/b21}, {empty/(2.4,0.6)/b12}, {empty/(2.4,0.4)/b22}, {empty/(3.4,1)/c11}, {empty/(3.4,0)/c21}, {empty/(3.6,0.6)/c12}, {empty/(3.6,0.4)/c22}, {vertex/(4,1)/c1}, {vertex/(4,0)/c2}, {vertex/(5,1)/d1}, {vertex/(5,0)/d2}}
						\node[\type] (\name) at \pos {};
					\foreach \type/\pos/\name in {{pc/(6,1)/f1}, {pc/(6,0)/f2}}
						\node[\type] (\name) at \pos {$1$};
					\foreach \pos/\name in {{(3,0.5)/\dots}}
						\node at \pos {$\name$};
					\foreach \edgetype/\source/ \dest in {nedge/e1/b2, pedge/b1/e1, pedge/b1/e2, nedge/b2/e2, nedge/b21/e2, pedge/e1/b11, pedge/e1/b12, nedge/e2/b22, pedge/c11/c1, nedge/c12/c1, pedge/c22/c2, nedge/c21/c2, nedge/d1/c2, pedge/c1/d1, pedge/c1/d2, nedge/c2/d2, nedge/f1/d2, pedge/d1/f1, pedge/d1/f2, nedge/d2/f2, nedge/f1/f2}
						\path[\edgetype] (\source) -- (\dest);
					\foreach \edgetype/\source/\dest in {wwedge/b1/bgn, wwedge/b2/bgn}
						\path[\edgetype] (\source) -- (\dest);
				\end{scope}
			\end{tikzpicture}
		\caption{The infinite family of $(2k+1)$-vertex maximal connected cyclotomic $\Z[i]$-graphs $C_{2k+1}$ for $k \geqslant 1$.}
		\label{fig:maxcycs4}
	\end{figure}
	
	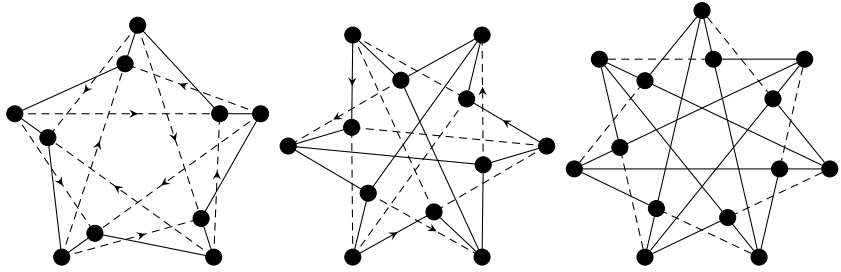
\begin{figure}[htbp]
		\centering
		\begin{tikzpicture}
			\newdimen\rad
			\rad=1.7cm
			\newdimen\radi
			\radi=1.2cm

			\foreach \x in {90,162,234,306,378}
			{
		    	\draw (\x:\rad) node[vertex] {};
				\draw (\x+8:\radi) node[vertex] {};
				\draw[pedge] (\x:\rad) -- (\x+8:\radi);
				\draw[pedge] (\x:\rad) -- (\x-72+8:\radi);
				\draw[wnedge2] (\x:\rad) -- (\x+72+8:\radi);
				\draw[wnedge2] (\x:\rad) -- (\x+216+8:\radi);
		    }
		\end{tikzpicture}
		\begin{tikzpicture}
			\newdimen\rad
			\rad=1.7cm
			\newdimen\radi
			\radi=0.9cm
			\def\shift{344}
			\foreach \x in {0,120,240}
			{
		    	\draw (\x:\rad) node[vertex] {};
				\draw (\x+\shift:\radi) node[vertex] {};
				\draw[pedge] (\x:\rad) -- (\x+\shift:\radi);
				\draw[nedge] (\x:\rad) -- (\x-60+\shift:\radi);
				\draw[nedge] (\x:\rad) -- (\x+180+\shift:\radi);
				\draw[wedge] (\x:\rad) -- (\x+60+\shift:\radi);
		    }
			\foreach \x in {60,180,300}
			{
		    	\draw (\x:\rad) node[vertex] {};
				\draw (\x+\shift:\radi) node[vertex] {};
				\draw[pedge] (\x:\rad) -- (\x+\shift:\radi);
				\draw[pedge] (\x:\rad) -- (\x+60+\shift:\radi);
				\draw[pedge] (\x:\rad) -- (\x+180+\shift:\radi);
				\draw[wnedge2] (\x-60+\shift:\radi) -- (\x:\rad);
		    }
		\end{tikzpicture}
			\begin{tikzpicture}
				\begin{scope}[auto, scale=1.5]
					\foreach \type/\pos/\name in {{vertex/(0,0)/a}, {vertex/(1,0)/b}, {vertex/(1.62,0.78)/c}, {vertex/(1.4,1.75)/d}, {vertex/(0.5,2.18)/e}, {vertex/(-0.4,1.75)/f}, {vertex/(-0.62,0.78)/g}, {vertex/(0.1,0.43)/t}, {vertex/(0.725,0.35)/u}, {vertex/(1.18,0.78)/v}, {vertex/(1.122,1.4)/w}, {vertex/(0.6,1.75)/x}, {vertex/(0,1.56)/y}, {vertex/(-0.22,0.97)/z}}
						\node[\type] (\name) at \pos {};
					\foreach \edgetype/\source/ \dest in {pedge/a/e, nedge/a/z, pedge/a/d, pedge/a/u, nedge/b/t, pedge/b/f, pedge/b/e, pedge/b/v, nedge/c/u, pedge/c/g, pedge/c/f, pedge/c/w, nedge/d/v, pedge/d/g, pedge/d/x, nedge/e/w, pedge/e/y, nedge/f/x, pedge/f/z, nedge/g/y, pedge/g/t}
						\path[\edgetype] (\source) -- (\dest);
				\end{scope}
			\end{tikzpicture}
	 	\caption{The sporadic maximal connected cyclotomic $\Z[\omega]$-graphs $S_{10}$, $S_{12}$, and $S_{14}$ of orders $10$, $12$, and $14$ respectively. The $\Z$-graph $S_{14}$ is also a $\Z[i]$-graph.}
		\label{fig:maxcycs5}
	\end{figure}

	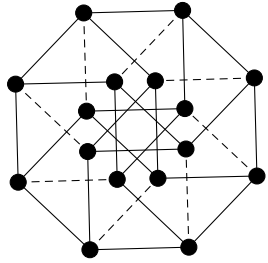
\begin{figure}[htbp]
		\centering
		\begin{tikzpicture}
			\newdimen\rad
			\rad=1.7cm
			\newdimen\radi
			\radi=0.7cm
			\foreach \x in {69,114,159,204,249,294,339,384}
			{
				\draw (\x:\radi) node[vertex] {};
				\draw[pedge] (\x:\radi) -- (\x+135:\radi);
		    }
			\foreach \x in {69,159,249,339}
			{
		    	\draw (\x:\rad) node[vertex] {};
				\draw[pedge] (\x:\rad) -- (\x+45:\rad);
				\draw[nedge] (\x:\rad) -- (\x+45:\radi);
				\draw[pedge] (\x:\rad) -- (\x-45:\radi);
		    }
			\foreach \x in {114,204,294,384}
			{
		    	\draw (\x:\rad) node[vertex] {};
				\draw[pedge] (\x:\rad) -- (\x+45:\rad);
				\draw[nedge] (\x:\rad) -- (\x+45:\radi);
				\draw[pedge] (\x:\rad) -- (\x-45:\radi);
		    }
		\end{tikzpicture}
		\caption{The sporadic maximal connected cyclotomic $\Z$-hypercube $S_{16}$.}
		\label{fig:maxcycs6}
	\end{figure}

	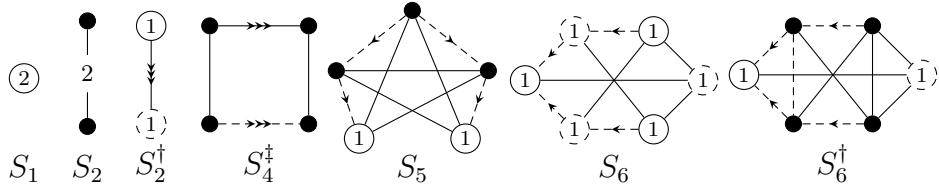
\begin{figure}[htbp]
		\centering
		\begin{tikzpicture}[scale=1.5, auto]
		\begin{scope}	
			\foreach \pos/\name/\type/\charge in {{(0,0.4)/a/pc/{2}}}
				\node[\type] (\name) at \pos {$\charge$}; 
			\node at (0,-0.4) {$S_1$};
		\end{scope}
		\end{tikzpicture}
		\begin{tikzpicture}[scale=1.4]
		\begin{scope}	
			\foreach \pos/\name/\type/\charge in {{(0,0)/a/zc/{}}, {(0,1)/b/zc/{}}}
				\node[\type] (\name) at \pos {$\charge$}; 
			\foreach \edgetype/\source/ \dest in {pedge/a/b}
			\path[\edgetype] (\source) -- node[weight2] {$2$} (\dest);
			\node at (0,-0.4) {$S_2$};
		\end{scope}
		\end{tikzpicture}
		\begin{tikzpicture}[scale=1.3, auto]
		\begin{scope}	
			\foreach \pos/\name/\type/\charge in {{(0,0)/a/nc/1}, {(0,1)/b/pc/1}}
				\node[\type] (\name) at \pos {$\charge$}; 
			\foreach \edgetype/\source/ \dest / \weight in {wwwedge/b/a/{}}
			\path[\edgetype] (\source) -- node[weight] {$\weight$} (\dest);
			\node at (0,-0.4) {$S_2^\dag$};
		\end{scope}
		\end{tikzpicture}
		\begin{tikzpicture}[scale=1.3, auto]
		\begin{scope}	
			\foreach \pos/\name/\type/\charge in {{(0,0)/a/zc/{}}, {(0,1)/b/zc/{}}, {(1,0)/c/zc/{}}, {(1,1)/d/zc/{}}}
				\node[\type] (\name) at \pos {$\charge$}; 
			\foreach \edgetype/\source/ \dest / \weight in {wwwnedge/a/c/{}, wwwedge/b/d/{}, pedge/a/b/{}, pedge/c/d/{}}
			\path[\edgetype] (\source) -- node[weight] {$\weight$} (\dest);
		\end{scope}
		\node at (0.5,-0.4) {$S_4^\ddag$};
		\end{tikzpicture}
		\begin{tikzpicture}[scale=1, auto]
		\begin{scope}	
			\foreach \pos/\name/\type/\charge in {{(-0.3,0.9)/a/zc/{}}, {(1.7,0.9)/b/zc/{}}, {(0,0)/c/pc/1}, {(1.4,0)/d/pc/1}, {(0.7,1.7)/e/zc/{}}}
				\node[\type] (\name) at \pos {$\charge$}; 
			\foreach \edgetype/\source/ \dest / \weight in {wnedge/a/c/{}, wnedge/b/d/{}, pedge/c/e/{}, pedge/e/d/{}, pedge/a/d/{}, pedge/b/c/{}, wnedge/e/b/{}, pedge/a/b/{}, wnedge/e/a/{}}
			\path[\edgetype] (\source) -- node[weight] {$\weight$} (\dest);
			\node at (0.7,-0.4) {$S_5$};
		\end{scope}
		\end{tikzpicture}
		\begin{tikzpicture}[scale=1.3, auto]
		\begin{scope}
			\foreach \pos/\name/\sign/\charge in {{(-0.1,0)/a/nc/1}, {(0.7,0)/b/pc/1}, {(1.2,0.5)/c/nc/1}, {(-0.6,0.5)/d/pc/1}, {(-0.1,1)/e/nc/1}, {(0.7,1)/f/pc/1}}
				\node[\sign] (\name) at \pos {$\charge$};
			\foreach \edgetype/\source/\dest/\weight in {{wnedge/b/a/{}}, {wnedge/a/d/{}}, {wnedge/f/e/{}}, {pedge/b/c/{}}, {wnedge/e/d/{}}, {pedge/f/c/{}}, {pedge/d/c/{}}, {pedge/a/f/{}}, {pedge/b/e/{}}}
				\path[\edgetype] (\source) -- node[weight] {$\weight$} (\dest);
				\node at (0.3,-0.4) {$S_6$};
		\end{scope}
		\end{tikzpicture}
		\begin{tikzpicture}[scale=1.3, auto]
		\begin{scope}
			\foreach \pos/\name/\sign/\charge in {{(-0.1,0)/a/zc/{}}, {(0.7,0)/b/zc/{}}, {(1.2,0.5)/c/nc/1}, {(-0.6,0.5)/d/pc/1}, {(-0.1,1)/e/zc/{}}, {(0.7,1)/f/zc/{}}}
				\node[\sign] (\name) at \pos {$\charge$};
			\foreach \edgetype/\source/\dest/\weight in {{wnedge/b/a/{}}, {wnedge/a/d/{}}, {wnedge/f/e/{}}, {pedge/b/c/{}}, {wnedge/e/d/{}}, {pedge/f/c/{}}, {pedge/d/c/{}}, {nedge/e/a/{}}, {pedge/b/f/{}}, {pedge/a/f/{}}, {pedge/b/e/{}}}
				\path[\edgetype] (\source) -- node[weight] {$\weight$} (\dest);
				\node at (0.3,-0.4) {$S_6^\dag$};
		\end{scope}
		\end{tikzpicture}
		\caption{The sporadic maximal connected cyclotomic $\Z[\omega]$-graphs of orders $1$, $2$, $4$, $5$, and $6$. The $\Z$-graphs $S_1$ and $S_2$ are also $\Z[i]$-graphs.}
		\label{fig:maxcycs8}
	\end{figure}

	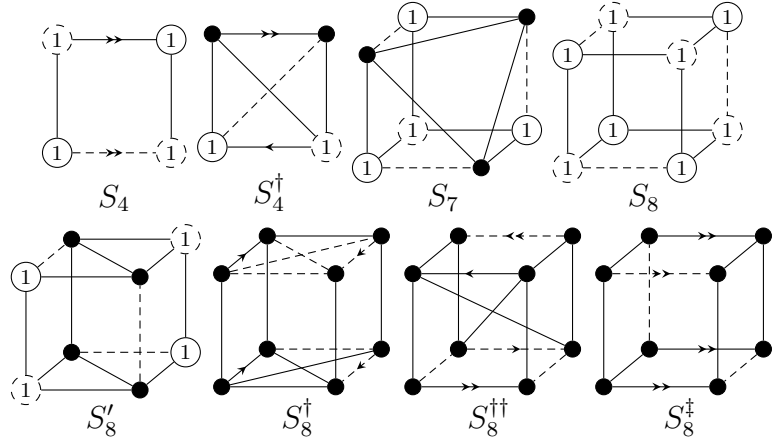
\begin{figure}[htbp]
		\centering
		\begin{tikzpicture}[scale=1.5, auto]
		\begin{scope}	
			\foreach \pos/\name/\type/\charge in {{(0,0)/a/pc/1}, {(0,1)/b/nc/1}, {(1,0)/c/nc/1}, {(1,1)/d/pc/1}}
				\node[\type] (\name) at \pos {$\charge$}; 
			\foreach \edgetype/\source/ \dest / \weight in {wwnedge/a/c/{}, wwedge/b/d/{}, pedge/b/a/{}, pedge/c/d/{}}
			\path[\edgetype] (\source) -- node[weight] {$\weight$} (\dest);
			\node at (0.5,-0.4) {$S_4$};
		\end{scope}
		\end{tikzpicture}
		\begin{tikzpicture}[scale=1.5, auto]
		\begin{scope}	
			\foreach \pos/\name/\type/\charge in {{(0,0)/a/pc/1}, {(0,1)/b/zc/{}}, {(1,0)/c/nc/1}, {(1,1)/d/zc/{}}}
				\node[\type] (\name) at \pos {$\charge$}; 
			\foreach \edgetype/\source/ \dest / \weight in {wedge/c/a/{}, wwedge/b/d/{}, pedge/b/a/{}, pedge/c/d/{}, nedge/a/d/{}, pedge/c/b/{}}
			\path[\edgetype] (\source) -- node[weight] {$\weight$} (\dest);
			\node at (0.5,-0.4) {$S_4^\dag$};
		\end{scope}
		\end{tikzpicture}
			\begin{tikzpicture}
				\def\XthreeDadj{0.6}
				\def\YthreeDadj{0.5}
				\begin{scope}
					\foreach \pos/\name/\sign/\charge in {{(0,0)/a/pc/1}, {(0,1.5)/b/zc/{}}, {(1.5,0)/c/zc/{}}, {(0 + \XthreeDadj,0 + \YthreeDadj)/d/nc/1}, {(0 + \XthreeDadj,1.5 + \YthreeDadj)/e/pc/1}, {(1.5 + \XthreeDadj,0 + \YthreeDadj)/f/pc/1}, {(1.5 + \XthreeDadj,1.5 + \YthreeDadj)/g/zc/{}}}
						\node[\sign] (\name) at \pos {$\charge$}; 
					\foreach \edgetype/\source/ \dest in {pedge/b/a, nedge/a/c, pedge/a/d, pedge/c/f, nedge/g/f, pedge/b/g, pedge/d/e, pedge/d/f, nedge/b/e, pedge/c/g, pedge/e/g, pedge/b/c}
					\path[\edgetype] (\source) -- (\dest);
				\node at (1,-0.4) {$S_{7}$};
				\end{scope}
				\end{tikzpicture}
				\begin{tikzpicture}
					\def\XthreeDadj{0.6}
					\def\YthreeDadj{0.5}
				\begin{scope}
					\foreach \pos/\name/\sign/\charge in {{(0,0)/a/nc/1}, {(0,1.5)/b/pc/1}, {(1.5,0)/c/pc/1}, {(1.5,1.5)/d/nc/1}, {(0 + \XthreeDadj,0 + \YthreeDadj)/e/pc/1}, {(0 + \XthreeDadj,1.5 + \YthreeDadj)/f/nc/1}, {(1.5 + \XthreeDadj,0 + \YthreeDadj)/g/nc/1}, {(1.5 + \XthreeDadj,1.5 + \YthreeDadj)/h/pc/1}}
						\node[\sign] (\name) at \pos {$\charge$}; 
					\foreach \edgetype/\source/ \dest in {pedge/b/a, nedge/a/c, pedge/a/e, pedge/c/g, pedge/c/d, nedge/b/f, pedge/b/d, pedge/e/f, pedge/e/g, nedge/h/g, pedge/f/h, pedge/d/h}
					\path[\edgetype] (\source) -- (\dest);
				\node at (1,-0.4) {$S_{8}$};
				\end{scope}
				\end{tikzpicture}

				\begin{tikzpicture}
					\def\XthreeDadj{0.6}
					\def\YthreeDadj{0.5}
				\begin{scope}	
					\foreach \pos/\name/\sign/\charge in {{(0,0)/a/nc/1}, {(0,1.5)/b/pc/1}, {(1.5,0)/c/zc/{}}, {(1.5,1.5)/d/zc/{}}, {(0 + \XthreeDadj,0 + \YthreeDadj)/e/zc/{}}, {(0 + \XthreeDadj,1.5 + \YthreeDadj)/f/zc/{}}, {(1.5 + \XthreeDadj,0 + \YthreeDadj)/g/pc/1}, {(1.5 + \XthreeDadj,1.5 + \YthreeDadj)/h/nc/1}}
						\node[\sign] (\name) at \pos {$\charge$}; 
					\foreach \edgetype/\source/ \dest in {pedge/b/a, pedge/a/c, pedge/a/e, pedge/c/g, nedge/c/d, nedge/b/f, pedge/b/d, pedge/e/f, nedge/e/g, pedge/h/g, pedge/f/h, pedge/d/h, pedge/d/f, pedge/c/e}
					\path[\edgetype] (\source) -- (\dest);
				\node at (1,-0.4) {$S^\prime_{8}$};
				\end{scope}
				\end{tikzpicture}
			\begin{tikzpicture}
				\def\XthreeDadj{0.6}
				\def\YthreeDadj{0.5}
			\begin{scope}	
				\foreach \pos/\name/\sign/\charge in {{(0,0)/a/zc/{}}, {(0,1.5)/b/zc/{}}, {(1.5,0)/c/zc/{}}, {(1.5,1.5)/d/zc/{}}, {(0 + \XthreeDadj,0 + \YthreeDadj)/e/zc/{}}, {(0 + \XthreeDadj,1.5 + \YthreeDadj)/f/zc/{}}, {(1.5 + \XthreeDadj,0 + \YthreeDadj)/g/zc/{}}, {(1.5 + \XthreeDadj,1.5 + \YthreeDadj)/h/zc/{}}}
					\node[\sign] (\name) at \pos {$\charge$}; 
				\foreach \edgetype/\source/ \dest/\weight in {pedge/b/a/{}, pedge/a/c/{}, pedge/a/g/{}, nedge/b/h/{}, wedge/a/e/{}, wnedge/g/c/{}, pedge/c/d/{}, wedge/b/f/{}, nedge/b/d/{}, pedge/e/f/{}, nedge/e/g/{}, pedge/h/g/{}, pedge/f/h/{}, wnedge/h/d/{}, nedge/d/f/{}, pedge/c/e/{}}
				\path[\edgetype] (\source) -- node[weight] {$\weight$} (\dest);
				\node at (1,-0.4) {$S^{\dagger}_{8}$};
			\end{scope}
			\end{tikzpicture}
			\begin{tikzpicture}
				\def\XthreeDadj{0.6}
				\def\YthreeDadj{0.5}
			\begin{scope}	
				\foreach \pos/\name/\sign/\charge in {{(0,0)/a/zc/{}}, {(0,1.5)/b/zc/{}}, {(1.5,0)/c/zc/{}}, {(1.5,1.5)/d/zc/{}}, {(0 + \XthreeDadj,0 + \YthreeDadj)/e/zc/{}}, {(0 + \XthreeDadj,1.5 + \YthreeDadj)/f/zc/{}}, {(1.5 + \XthreeDadj,0 + \YthreeDadj)/g/zc/{}}, {(1.5 + \XthreeDadj,1.5 + \YthreeDadj)/h/zc/{}}}
					\node[\sign] (\name) at \pos {$\charge$}; 
				\foreach \edgetype/\source/ \dest/\weight in {pedge/b/a/{}, wwedge/a/c/{}, nedge/a/e/{}, nedge/g/c/{}, pedge/d/c/{}, pedge/b/f/{}, wedge/d/b/{}, pedge/e/f/{}, wnedge/e/g/{}, pedge/h/g/{}, wwnedge/h/f/{}, pedge/h/d/{}, pedge/d/e/{}, pedge/b/g/{}}
				\path[\edgetype] (\source) -- node[weight] {$\weight$} (\dest);
				\node at (1,-0.4) {$S^{\dagger \dagger}_{8}$};
			\end{scope}
		\end{tikzpicture}
		\begin{tikzpicture}
			\def\XthreeDadj{0.6}
			\def\YthreeDadj{0.5}
		\begin{scope}	
			\foreach \pos/\name/\sign/\charge in {{(0,0)/a/zc/{}}, {(0,1.5)/b/zc/{}}, {(1.5,0)/c/zc/{}}, {(1.5,1.5)/d/zc/{}}, {(0 + \XthreeDadj,0 + \YthreeDadj)/e/zc/{}}, {(0 + \XthreeDadj,1.5 + \YthreeDadj)/f/zc/{}}, {(1.5 + \XthreeDadj,0 + \YthreeDadj)/g/zc/{}}, {(1.5 + \XthreeDadj,1.5 + \YthreeDadj)/h/zc/{}}}
				\node[\sign] (\name) at \pos {$\charge$}; 
			\foreach \edgetype/\source/ \dest/\weight in {pedge/b/a/{}, wwedge/a/c/{}, pedge/a/e/{}, nedge/g/c/{}, pedge/d/c/{}, pedge/b/f/{}, wwnedge/b/d/{}, nedge/e/f/{}, wwedge/e/g/{}, pedge/h/g/{}, wwedge/f/h/{}, pedge/h/d/{}}
			\path[\edgetype] (\source) -- node[weight] {$\weight$} (\dest);
			\node at (1,-0.4) {$S^{\ddag}_{8}$};
		\end{scope}
	\end{tikzpicture}

		\caption{The sporadic maximal connected cyclotomic $\Z[i]$-graphs of orders $4$, $7$, and $8$. The $\Z$-graphs $S_7$, $S_8$, and $S_8^\prime$ are also $\Z[\omega]$-graphs.}
		\label{fig:maxcycs10}
	\end{figure}
	
	We record for later use the following consequences which are immediate from the  classification.

	\begin{corollary}\label{cor:classi}
		Any connected cyclotomic $\Z[i]$-matrix that is equivalent to the adjacency matrix of a subgraph of $T_{2k}$, $T^{(i)}_{2k}$, $C_{2k}$, $C_{2k+1}$, $C_{2k}^{++}$, or $C_{2k}^{+-}$ is strongly equivalent to the adjacency matrix of a subgraph of $T_{2k}$, $T^{(i)}_{2k}$, $C_{2k}$, $\pm C_{2k+1}$, $\pm C_{2k}^{++}$, or $C_{2k}^{+-}$.
	\end{corollary}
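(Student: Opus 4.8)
The argument is a short case analysis; Lemma~\ref{lem:cycclass} is used only to pin down the shapes of the maximal graphs involved. Suppose the connected cyclotomic $\Z[i]$-matrix $A$ is equivalent to (the adjacency matrix of) a subgraph $H$ of $G$, where $G$ is one of $T_{2k}$, $T^{(i)}_{2k}$, $C_{2k}$, $C_{2k+1}$, $C_{2k}^{++}$, $C_{2k}^{+-}$. By the definition of equivalence, $A$ is strongly equivalent either to $H$ or to $-H$, where $-H$ is the graph obtained from $H$ by negating every edge-weight and every charge; in the first case there is nothing to prove, since $G$ itself lies in the target list. So assume $A$ is strongly equivalent to $-H$, and note that $-H$ is an induced subgraph of $-G$. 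If $G$ is $C_{2k+1}$ or $C_{2k}^{++}$ then $-G$ is $-C_{2k+1}$ or $-C_{2k}^{++}$, each of which appears in the target list, and we are done.

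It remains to handle $G \in \{T_{2k}, T^{(i)}_{2k}, C_{2k}, C_{2k}^{+-}\}$, and for these I would prove the sharper statement that $-G$ is strongly equivalent to $G$. Granting this, write $-G = Q G Q^*$ with $Q \in U_n(\Z[i])$; since $Q$ is built from vertex permutations and diagonal switchings, conjugation by $Q$ restricts to principal submatrices, so $-H$ is strongly equivalent to an induced subgraph of $G$ (namely the one on the image of $V(H)$ under the underlying permutation), and hence so is $A$, as required.

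To produce these strong equivalences I would exhibit, in each of the four cases, an explicit $Q\in U_n(\Z[i])$ conjugating $-G$ to $G$; in fact permutation matrices and $(-1)$-switchings suffice. In $T_{2k}$, $T^{(i)}_{2k}$ and $C_{2k}$ (Figures~\ref{fig:maxcycs1} and~\ref{fig:maxcycs2}) the vertices come in consecutive pairs $\{x^{(j)}_1, x^{(j)}_2\}$, each joined to the next pair by four edges whose weights depend only on the subscript of the left-hand endpoint; I would check that the permutation swapping $x^{(j)}_1 \leftrightarrow x^{(j)}_2$ in every such pair carries $-T_{2k}$ to $T_{2k}$ and $-T^{(i)}_{2k}$ to $T^{(i)}_{2k}$, and carries $-C_{2k}$ to the graph obtained from $C_{2k}$ by negating the two (equal) weights at the degree-two end vertex, which a single $(-1)$-switching there then repairs. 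For $C_{2k}^{+-}$, whose middle follows the same edge pattern but whose two ends carry a $+1$-charged and a $-1$-charged rung, I would compose three moves: first reverse the order of the pairs, which interchanges the two oppositely charged ends (necessary, since the charges of $-C_{2k}^{+-}$ are the negatives of those of $C_{2k}^{+-}$); then $(-1)$-switch at every vertex $x^{(j)}_2$; and finally apply the pairwise swap $x^{(j)}_1 \leftrightarrow x^{(j)}_2$ — the last two moves together restoring the inter-pair edges and returning both rungs to weight $+1$.

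The only real work is the case-by-case verification that the prescribed permutations and switchings do precisely what is claimed; the main thing to watch is that permutations and $(-1)$-switchings fix every charge and introduce no edge-weight outside Table~\ref{tab:ziedge}. The same observation explains the signs in the statement: the charge patterns $(+1,\dots)$ of $C_{2k+1}$ and $(+1,+1)$ of $C_{2k}^{++}$ are invariant under all permutations and switchings, so these two graphs cannot be identified with their negatives, which is why $\pm C_{2k+1}$ and $\pm C_{2k}^{++}$, rather than $C_{2k+1}$ and $C_{2k}^{++}$, must be allowed in the target list.
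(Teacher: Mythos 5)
Your proposal is correct: the reduction of ``equivalent'' to ``strongly equivalent to $H$ or to $-H$'', the explicit permutation/switching identifications of $-G$ with $G$ for $T_{2k}$, $T^{(i)}_{2k}$, $C_{2k}$, $C_{2k}^{+-}$, and the observation that the invariant charge multiset forces the $\pm$ on $C_{2k+1}$ and $C_{2k}^{++}$ are exactly the verification required. The paper itself offers no proof beyond declaring the corollary ``immediate from the classification'', so your argument simply spells out what the authors leave to the reader.
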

	
	\begin{corollary}\label{cor:classw}
		Any connected cyclotomic $\Z[\omega]$-matrix that is equivalent to the adjacency matrix of a subgraph of $T_{2k}$, $T^{(\omega)}_{2k}$, $C_{2k}^{++}$, or $C_{2k}^{+-}$ is strongly equivalent to the adjacency matrix of a subgraph of $T_{2k}$, $T^{(\omega)}_{2k}$, $\pm C_{2k}^{++}$, or $C_{2k}^{+-}$.
	\end{corollary}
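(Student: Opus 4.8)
My plan is to unwind the definition of ``equivalent'' and split the problem into a sign part and a Galois part. By definition, the hypothesis that $A$ is equivalent to the adjacency matrix of a subgraph $H$ of $G\in\{T_{2k},\,T^{(\omega)}_{2k},\,C^{++}_{2k},\,C^{+-}_{2k}\}$ says precisely that $A=\varepsilon\,\sigma^{j}(QHQ^{*})$ for some $\varepsilon\in\{\pm1\}$, some $j\in\{0,1\}$ and some $Q\in U_n(\Z[\omega])$, where $\sigma$ is the non-trivial element of $\operatorname{Gal}(\Q(\omega)/\Q)$, i.e.\ complex conjugation. Since strong equivalence is an equivalence relation, it is enough to check, separately, that $\sigma(H)$ and $-H$ are each strongly equivalent to the adjacency matrix of a subgraph of $G$ (and, in the one exceptional case $G=C^{++}_{2k}$, that $-H$ is strongly equivalent to the adjacency matrix of a subgraph of $-C^{++}_{2k}$); composing these two reductions with the conjugation by $Q$ then places $A$ in the required class.

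For the Galois part I would use that $H$ is Hermitian, so $\sigma(H)=\overline{H}=H^{\top}$; thus $\sigma(H)$ is the adjacency matrix of the graph obtained from $H$ by reversing every edge, equivalently by conjugating every edge-weight. For $G\in\{T_{2k},C^{++}_{2k},C^{+-}_{2k}\}$ this is a $\Z$-graph, so $\sigma$ acts as the identity and there is nothing to prove. For $G=T^{(\omega)}_{2k}$ I would exhibit a symmetry of the toral tessellation in Figure~\ref{fig:maxcycs1} that reverses the direction of travel around the torus and then correct the interchange $\omega\leftrightarrow\overline{\omega}$ on the twisted rung by a $\mu$-switching with $\mu$ a suitable sixth root of unity in $\Z[\omega]$; the resulting isomorphism carries $T^{(\omega)}_{2k}$ onto its transpose, and restricting it to $V(H)$ shows $\sigma(H)$ is strongly equivalent to a subgraph of $T^{(\omega)}_{2k}$.

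For the sign part: $T_{2k}$ and $T^{(\omega)}_{2k}$ are uncharged, and one checks from Figure~\ref{fig:maxcycs1} that a $\pm1$-switching (combined, if necessary, with a column-reflection of the toral tessellation) sends $G$ to $-G$, so restricting to $V(H)$ gives that $-H$ is strongly equivalent to a subgraph of $G$. For $C^{+-}_{2k}$ I would combine the reflection interchanging its two charged ends (which sends the end-charges $(+1,-1)$ to $(-1,+1)$) with a $(-1)$-switching on one side of the underlying bipartition to realise $-C^{+-}_{2k}$, again placing $-H$ as a subgraph of $C^{+-}_{2k}$. For $C^{++}_{2k}$, however, negation turns both end-charges from $+1$ to $-1$, a $\pm1$-switching preserves charges, and no relabelling of $C^{++}_{2k}$ swaps oppositely-charged vertices, so $-C^{++}_{2k}$ is genuinely a different graph: the best available conclusion is that $-H$ is strongly equivalent to a subgraph of $-C^{++}_{2k}$, which is exactly why the statement carries ``$\pm C^{++}_{2k}$''. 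Assembling the three reductions proves the corollary; Corollary~\ref{cor:classi} is entirely parallel, with $C_{2k+1}$ playing the role of $C^{++}_{2k}$ (its lone end-charge cannot be negated by switching or relabelling, hence ``$\pm C_{2k+1}$'') and $C_{2k}$ behaving like $T^{(\omega)}_{2k}$ under the Gaussian action $i\mapsto-i$.

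The step I expect to be the main obstacle is the Galois part for $T^{(\omega)}_{2k}$ — and, in Corollary~\ref{cor:classi}, for $C_{2k}$ and $T^{(i)}_{2k}$ — namely writing down explicitly the switching that cancels the weight-conjugation introduced by transposition. This is a short but error-prone computation against the exact weightings drawn in Figures~\ref{fig:maxcycs1} and~\ref{fig:maxcycs2}, using that $\omega/\overline{\omega}$ (resp.\ $i/\overline{i}$) is a unit; everything else is bookkeeping with bipartiteness, relabellings, the fact that switching preserves charges, and the transitivity of strong equivalence.
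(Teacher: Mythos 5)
The paper gives no proof of this corollary at all --- it is recorded, together with Corollary~\ref{cor:classi}, as ``immediate from the classification'' of \cite{Greaves:CycloEG11} --- so there is no argument of the authors' to compare yours against step by step. Judged on its own, your reduction has the right overall shape: since equivalence is by definition strong equivalence to $\pm H$, the entire content of the statement is that the sign $-1$ can be absorbed for $T_{2k}$, $T^{(\omega)}_{2k}$ and $C^{+-}_{2k}$ but not for $C^{++}_{2k}$ (whose charges switching and conjugation both preserve). However, the half of your argument that you single out as the main obstacle --- the ``Galois part'' --- is vacuous. The paper defines $A$ to be strongly equivalent to $B$ when $A=\sigma(QBQ^*)$ for some unitary $Q$ \emph{and some} $\sigma\in\operatorname{Gal}(L/\Q)$: the conjugation is already built into strong equivalence. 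So if $A=\sigma(QHQ^*)$ with $H$ a subgraph of $G$, then $A$ is strongly equivalent to $H$ itself and there is nothing to do; no symmetry of $T^{(\omega)}_{2k}$ carrying it to its transpose is needed.

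The part that does carry content --- exhibiting switchings and relabellings witnessing that $-T_{2k}$, $-T^{(\omega)}_{2k}$ and $-C^{+-}_{2k}$ are strongly equivalent to the corresponding graphs --- is only announced, and the two recipes you do sketch are wrong as stated. For the tori the symmetry you want is the swap of the rows $A$ and $B$ in Figure~\ref{fig:maxcycs1}, which sends each $2\times 2$ inter-column weight block $\left(\begin{smallmatrix}1&1\\-1&-1\end{smallmatrix}\right)$ to its negative and hence realises $-T_{2k}$ exactly; your ``column-reflection'' cannot work for odd $k$, because the all-$(+1)$ top $k$-cycle and the all-$(-1)$ bottom $k$-cycle have switching-invariant weight products $+1$ and $-1$ that negation exchanges, and a column-reflection fixes each of these cycles. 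For $C^{+-}_{2k}$ there is no ``underlying bipartition'' to switch on: the end columns carry internal edges between their charged vertices (Figure~\ref{fig:maxcycs3}), creating triangles, and a switching applied uniformly to a column leaves those internal edges untouched, so your reflection-plus-bipartition recipe does not close. Both facts are true and fixable (the relevant triangle and cycle products are consistent after the end-swapping reflection), but as written the decisive computations are exactly the ones left undone, while the computation you flag as hardest need not be done at all.
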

	
	There are two types of maximal cyclotomic graphs.
	\begin{itemize}
		\item[] The \textbf{sporadics}: $S_1$, $S_2$, $S_2^\dag$, $S_4$, $S_4^\dag$, $S_4^\ddag$, $S_5$, $S_6$, $S_6^\dag$, $S_7$, $S_8$, $S_8^\prime$, $S_8^\dag$, $S_8^{\dag \dag}$, $S_8^\ddag$, $S_{10}$, $S_{12}$, $S_{14}$, and $S_{16}$; see Figures~\ref{fig:maxcycs5}, \ref{fig:maxcycs6}, \ref{fig:maxcycs8}, and \ref{fig:maxcycs10}.
		\item[]
		\item[] The \textbf{non-sporadics}: $T_{2k} (k \geqslant 3)$, $T_{2k}^{(i)} (k \geqslant 3)$, $T_{2k}^{(\omega)} (k \geqslant 3)$, $C_{2k} (k \geqslant 2)$, $C_{2k}^{++} (k \geqslant 2)$, $C_{2k}^{+-} (k \geqslant 2)$, and $C_{2k+1} (k \geqslant 1)$; see Figures~\ref{fig:maxcycs1}, \ref{fig:maxcycs2}, \ref{fig:maxcycs3}, and \ref{fig:maxcycs4}.
	\end{itemize}
	
	A graph is called \textbf{minimal non-cyclotomic} if it has at least one eigenvalue lying outside of the interval $[-2,2]$ and is minimal in that respect, i.e., all of its subgraphs are cyclotomic.
	The two types of maximal cyclotomic graph lead to two types of minimal non-cyclotomic graph.
	A minimal non-cyclotomic graph is called \textbf{non-supersporadic} if all of its proper connected subgraphs are equivalent to subgraphs of non-sporadics and \textbf{supersporadic} otherwise.
	
	\begin{figure}[h]
		\centering
			\begin{tikzpicture}
			\begin{scope}[scale=1]	
				\foreach \pos/\name/\sign/\charge in {{(0,0)/a/pc/{\ast}}, {(0,1)/b/pc/1}, {(1,1)/c/pc/{\ast}}, {(1,0)/d/pc/{\ast}}, {(0,-0.5)/e/empty/{}}}
					\node[\sign] (\name) at \pos {$\charge$}; 
				\node at (0.5,-0.8) {$X_1$};
				\foreach \edgetype/\source/ \dest /\weight in {pedge/a/b/{\ast}, pedge/c/b/{\ast}, pedge/d/c/{\ast}}
				\path[\edgetype] (\source) -- node[weight2] {$\weight$} (\dest);
			\end{scope}
			\end{tikzpicture}
			\begin{tikzpicture}
			\begin{scope}[scale=1]	
				\foreach \pos/\name/\sign/\charge in {{(0,0)/a/pc/{\ast}}, {(0,1)/b/pc/{\ast}}, {(1,1)/c/pc/{\ast}}, {(1,0)/d/pc/{\ast}}, {(0,-0.5)/e/empty/{}}}
					\node[\sign] (\name) at \pos {$\charge$}; 
				\node at (0.5,-0.8) {$X_2$};
				\foreach \edgetype/\source/ \dest /\weight in {wwedge/c/b/{}}
				\path[\edgetype] (\source) -- node[weight] {$\weight$} (\dest);
				\foreach \edgetype/\source/ \dest /\weight in {pedge/a/b/{\ast}, pedge/d/c/{\ast}}
				\path[\edgetype] (\source) -- node[weight2] {$\weight$} (\dest);
			\end{scope}
			\end{tikzpicture}
			\begin{tikzpicture}
			\begin{scope}[scale=1]
				\foreach \pos/\name/\sign/\charge in {{(0,1)/a/pc/1}, {(0,0)/b/nc/1}}
					\node[\sign] (\name) at \pos {$\charge$}; 
				\node at (0,-0.8) {$X_3$};
				\foreach \edgetype/\source/ \dest /\weight in {pedge/a/b/{\ast}}
				\path[\edgetype] (\source) -- node[weight2] {$\weight$} (\dest);
			\end{scope}
			\end{tikzpicture}
			\begin{tikzpicture}
				\begin{scope}[scale=1]
					\foreach \pos/\name/\sign/\charge in {{(0,1)/a/pc/1}, {(0,0)/b/pc/1}}
						\node[\sign] (\name) at \pos {$\charge$}; 
					\node at (0,-0.8) {$X_4$};
					\foreach \edgetype/\source/ \dest /\weight in {wwedge/a/b/{}}
					\path[\edgetype] (\source) -- node[weight] {$\weight$} (\dest);
				\end{scope}
				\end{tikzpicture}
				\begin{tikzpicture}
				\begin{scope}[scale=1]	
					\foreach \pos/\name/\sign/\charge in {{(1,1)/a/pc/1}, {(0,1)/b/pc/1}, {(0,0)/c/pc/{\ast}}, {(0,-0.5)/d/empty/{}}}
						\node[\sign] (\name) at \pos {$\charge$}; 
					\node at (0.5,-0.8) {$X_5$};
					\foreach \edgetype/\source/ \dest /\weight in {pedge/a/b/{\ast}, pedge/c/b/{\ast}}
					\path[\edgetype] (\source) -- node[weight2] {$\weight$} (\dest);
				\end{scope}
				\end{tikzpicture}
				\begin{tikzpicture}
			\begin{scope}[scale=1, auto]	
				\foreach \pos/\name/\sign/\charge in {{(1,1)/a/pc/1}, {(0,1)/b/zc/{}}, {(0,0)/c/pc/1}}
					\node[\sign] (\name) at \pos {$\charge$}; 
				\node at (0.5,-0.8) {$X_6$};
				\foreach \edgetype/\source/ \dest /\weight in {pedge/a/b/{}, pedge/c/b/{}}
				\path[\edgetype] (\source) -- node[weight] {$\weight$} (\dest);
			\end{scope}
			\end{tikzpicture}
			\begin{tikzpicture}
			\begin{scope}[scale=1, auto]	
				\foreach \pos/\name/\sign/\charge in {{(1,1)/a/pc/1}, {(0,1)/b/zc/{}}, {(0,0)/c/nc/1}}
					\node[\sign] (\name) at \pos {$\charge$}; 
				\node at (0.5,-0.8) {$X_7$};
				\foreach \edgetype/\source/ \dest /\weight in {pedge/a/b/{}, pedge/c/b/{}}
				\path[\edgetype] (\source) -- node[weight] {$\weight$} (\dest);
			\end{scope}
			\end{tikzpicture}
			\begin{tikzpicture}
			\begin{scope}[scale=1]	
				\foreach \pos/\name/\sign/\charge in {{(1,1)/a/pc/1}, {(0,1)/b/zc/{}}, {(0,0)/c/pc/\ast}}
					\node[\sign] (\name) at \pos {$\charge$}; 
				\node at (0.5,-0.8) {$X_8$};
				\foreach \edgetype/\source/ \dest /\weight in {wwedge/a/b/{}}
				\path[\edgetype] (\source) -- node[weight] {$\weight$} (\dest);
				\foreach \edgetype/\source/ \dest /\weight in {pedge/c/b/{\ast}}
				\path[\edgetype] (\source) -- node[weight2] {$\weight$} (\dest);
			\end{scope}
			\end{tikzpicture}
		\caption{Some $\Z[i]$-graphs that are not subgraphs of any non-supersporadic graph having at least $5$ vertices.}
		\label{fig:nonnonsupersporads}
	\end{figure}
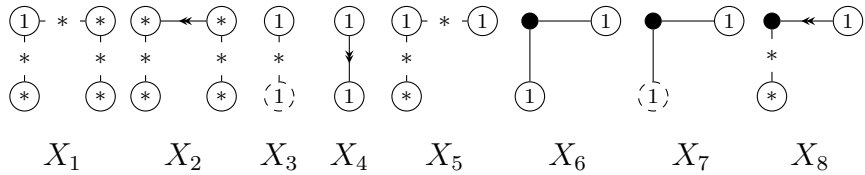
	
	Now we give some bounds on the entries of a Hermitian matrix over $\Z[i]$ or $\Z[\omega]$ having small Mahler measure.
	Let $R$ denote either $\Z[i]$ or $\Z[\omega]$.
	
	\begin{lemma}\label{lem:entryGreaterThan2}
		Let $A$ be a Hermitian $R$-matrix having an entry $a$ such that $\abs{a} > 2$.
		Then $M(R_A) \geqslant (\sqrt{5} + 1)/2 > \tau_0$.
	\end{lemma}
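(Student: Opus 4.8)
The plan is to extract from $A$ a single eigenvalue of absolute value at least $\sqrt5$, and then to read off a zero of the associated reciprocal polynomial that lies outside the unit circle. Throughout, $R$ denotes $\Z[i]$ or $\Z[\omega]$. Since $a$ is an entry of a matrix over $R$, the norm $\abs{a}^2$ is a nonnegative rational integer, so the hypothesis $\abs{a}>2$ forces $\abs{a}^2\geqslant 5$. I would then split into two cases according to whether $a$ is a diagonal entry of $A$ or not.

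In the diagonal case, $a=A_{vv}$ for some vertex $v$, and Hermiticity makes $a$ real, hence $a\in R\cap\R=\Z$ and $\abs{a}\geqslant 3$. The $1\times 1$ principal submatrix $(a)$ of $A$ has the single eigenvalue $a$, so repeated application of the Interlacing Theorem~\ref{thm:interlacing} shows that the largest eigenvalue of $A$ is $\geqslant a$ and the smallest is $\leqslant a$; in particular $A$ has an eigenvalue $\lambda$ with $\abs{\lambda}\geqslant\abs{a}\geqslant 3>\sqrt5$. In the off-diagonal case, $a=A_{uv}$ with $u\neq v$, and the $2\times 2$ principal submatrix of $A$ on the vertices $\{u,v\}$ is $B=\begin{pmatrix} c & a\\ \bar a & d\end{pmatrix}$ with $c=A_{uu}$ and $d=A_{vv}$ real. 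The discriminant of the characteristic polynomial of $B$ equals $(c-d)^2+4\abs{a}^2\geqslant 4\abs{a}^2\geqslant 20$, so the two eigenvalues of $B$ differ by at least $2\sqrt5$; consequently at least one of them, say $\mu$, satisfies $\abs{\mu}\geqslant\sqrt5$. By the Interlacing Theorem~\ref{thm:interlacing} again, $A$ has an eigenvalue $\lambda$ with $\abs{\lambda}\geqslant\abs{\mu}\geqslant\sqrt5$.

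In either case $\lambda$ is a real zero of $\chi_A$ with $\abs{\lambda}\geqslant\sqrt5>2$, so the equation $z+1/z=\lambda$ has two real solutions, reciprocal to one another, the one of larger modulus being $z_0$ with $\abs{z_0}=\bigl(\abs{\lambda}+\sqrt{\lambda^2-4}\bigr)/2$. The function $t\mapsto\bigl(t+\sqrt{t^2-4}\bigr)/2$ is increasing on $[2,\infty)$ and equals $(\sqrt5+1)/2$ at $t=\sqrt5$, whence $\abs{z_0}\geqslant(\sqrt5+1)/2$. Since $z_0$ is a zero of $R_A(z)=\chi_A(z+1/z)$ lying outside the unit circle, $M(R_A)\geqslant\abs{z_0}\geqslant(\sqrt5+1)/2$, and $(\sqrt5+1)/2=1.61803\dots>1.17628\dots=\tau_0$ finishes the argument.

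The only points that require genuine care are the elementary norm bound $\abs{a}^2\geqslant 5$ (valid in both $\Z[i]$ and $\Z[\omega]$, since in each ring every norm exceeding $4$ is at least $5$) and the monotonicity of $t\mapsto\bigl(t+\sqrt{t^2-4}\bigr)/2$ on $[2,\infty)$; the remainder is a direct appeal to interlacing, so I do not anticipate a serious obstacle.
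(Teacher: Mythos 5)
Your proof is correct and follows essentially the same route as the paper's: both arguments establish that the spectral radius $\rho$ of $A$ is at least $\sqrt{5}$ (using that the norm of an element of $R$ exceeding $4$ is at least $5$) and then pass to the zero $(\rho+\sqrt{\rho^2-4})/2 \geqslant (\sqrt{5}+1)/2$ of $R_A$. The only cosmetic difference is in the off-diagonal case, where the paper bounds the $j$th diagonal entry of $A^2$ by $\abs{a}^2$ and interlaces on $A^2$, while you bound the discriminant of the $2\times 2$ principal submatrix and interlace on $A$ itself; both yield the same conclusion.
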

	\begin{proof}
		If $a$ is on the diagonal of $A$ then, by interlacing, the spectral radius of $A$ is at least $\abs{a}$.
		Otherwise, suppose that $a$ is on the off-diagonal, i.e., $a_{jk} = a$ for some $j \ne k$.
		Then the $j$th diagonal entry of $A^2$ is at least $\abs{a}^2$ and hence, by interlacing, the largest eigenvalue of $A^2$ is at least $\abs{a}^2$.
		Therefore, in either case, the spectral radius of $A$ is at least $\abs{a}$.

		Let $\rho$ be the spectral radius of $A$.
		At least one zero of $R_A$ is given by $\alpha(\rho) = (\rho + \sqrt{\rho^2 - 4})/2$.
		Up to conjugation, the smallest modulus of an element of $R$ that is greater than $2$ is at least $\sqrt{5}$.
		Hence it is clear that $\abs{\alpha(\rho)} \geqslant (\sqrt{5} + 1)/2 > \tau_0$.
	\end{proof}

	By the above lemma, in order to settle Lehmer's conjecture for polynomials $R_A$, we need only consider matrices such that each entry $a$ satisfies $\abs{a} \leqslant 2$.

	\begin{lemma}\label{lem:irratMustBeRoot2}
		Let $A$ be a non-cyclotomic $R$-matrix having at least one entry of modulus $2$.
		Then $M(R_A) \geqslant (\sqrt{5} + 1)/2 > \tau_0$.
	\end{lemma}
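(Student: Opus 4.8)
The plan is to reduce the assertion to the single inequality $\rho\geqslant\sqrt5$, where $\rho$ is the spectral radius of $A$, and then to read this off from a $1\times1$ or $2\times2$ principal submatrix carrying the entry of modulus $2$. First I would use Lemma~\ref{lem:entryGreaterThan2} to assume that every entry of $A$ has modulus at most $2$, and fix an entry $a$ with $\abs a=2$. As $A$ is Hermitian, its eigenvalues are real, and as $A$ is non-cyclotomic we have $\rho>2$; then, just as in the proof of Lemma~\ref{lem:entryGreaterThan2}, the polynomial $R_A$ has a real zero $\alpha(\rho)=(\rho+\sqrt{\rho^2-4})/2>1$, and $\alpha$ is strictly increasing on $[2,\infty)$ with $\alpha(\sqrt5)=(\sqrt5+1)/2$. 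Hence it is enough to show $\rho\geqslant\sqrt5$. I would also assume $A$ is indecomposable, so its graph is connected on $n\geqslant2$ vertices, and, after a switching and possibly replacing $A$ by $-A$ (neither of which changes $\rho$), take $a=2$.

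The argument then splits according to whether $a$ lies on the diagonal. If $a=A_{vv}=2$ is a charge, I choose a neighbour $u$ of $v$ and restrict to the principal submatrix $\bigl(\begin{smallmatrix}2&b\\ \bar b&c\end{smallmatrix}\bigr)$ on $\{v,u\}$, where $\abs{b}\geqslant1$ and $c\in\Z$ with $\abs c\leqslant2$ (the case $\abs c>2$ being covered by Lemma~\ref{lem:entryGreaterThan2}). Its largest eigenvalue is $\lambda_+=\tfrac12\bigl(2+c+\sqrt{(2-c)^2+4\abs{b}^2}\bigr)$. Since $2+c\geqslant0$, the inequality $\lambda_+\geqslant\sqrt5$ is equivalent, after squaring, to $\abs{b}^2\geqslant 5-2\sqrt5+(2-\sqrt5)c$; the right-hand side equals $1$ at $c=-2$ and decreases as $c$ increases, so this holds because $\abs{b}^2\geqslant1$. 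Interlacing (Theorem~\ref{thm:interlacing}) then yields $\rho\geqslant\lambda_+\geqslant\sqrt5$.

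If instead $a=A_{uv}=2$ is an edge-weight with $u\neq v$, I would work with $A^2$. Its $u$-th diagonal entry equals $\sum_k\abs{A_{uk}}^2\geqslant\abs{A_{uv}}^2=4$, and being a diagonal entry of the positive semidefinite matrix $A^2$ it is at most $\rho(A^2)=\rho^2$; so at least $\rho\geqslant2$. To sharpen $4$ to $5$, observe that if $u$ carries a nonzero charge, or $u$ has a neighbour other than $v$, then this diagonal entry is at least $4+1=5$ and we are done; the same applies with $u$ and $v$ interchanged. The only remaining possibility is that $u$ and $v$ are uncharged and have no neighbour besides each other, so that the weight-$2$ edge $uv$ constitutes a connected component of the graph of $A$; by indecomposability $A$ is then the adjacency matrix of $S_2$, which is cyclotomic (its eigenvalues are $\pm2$), contradicting the hypothesis.

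The main difficulty, modest as it is, lies in the fact that the two extremal configurations only just clear the bound: for $\bigl(\begin{smallmatrix}2&1\\1&-2\end{smallmatrix}\bigr)$ one has $\lambda_+=\sqrt5$ exactly, so the charged case really does require the sharp inequality above, while the bare weight-$2$ edge is ruled out only by non-cyclotomicity (equivalently, by the fact that $S_2$ is a cyclotomic graph). The care therefore goes into making the case analysis exhaustive, so that no such near-equality instance slips through.
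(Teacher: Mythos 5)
Your proof is correct and follows essentially the same route as the paper: reduce to showing the spectral radius is at least $\sqrt5$, split on whether the modulus-$2$ entry is a charge or an off-diagonal edge-weight, extract the bound from a small principal submatrix (the paper reads $(A^2)_{jj}\geqslant 4+1=5$ off the diagonal of $A^2$ in both cases, where you instead compute the $2\times2$ eigenvalue explicitly in the charged case), and use indecomposability together with the cyclotomicity of $(2)$ and $\bigl(\begin{smallmatrix}0&2\\2&0\end{smallmatrix}\bigr)$ to dispose of the degenerate configurations. The differences are cosmetic.
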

	\begin{proof}
		Since the Mahler measure of $R_A$ is preserved under equivalence, we may assume that $A$ has an entry $a_{jk} = 2$.

	 	\paragraph{Case 1.}
		Suppose $j = k$.
		Since the $1 \times 1$ matrix $(2)$ is cyclotomic, $A$ has at least two rows and hence contains as a principal submatrix the matrix
		\[
			\begin{pmatrix}
				2 & a \\
				a & b
			\end{pmatrix}.
		\]
		Since $A$ is indecomposable, it is possible to choose $a$ to be nonzero.
		Therefore, the $j$th diagonal entry of $A^2$ is at least $4+\abs{a}^2$, and hence the spectral radius $\rho$ of $A$ is at least $\sqrt{4+\abs{a}^2}$.
		Now, for all nonzero $a \in R$, we have $\abs{a}^2 \geqslant 1$ and so we have the following inequality
		\[
			\rho \geqslant \sqrt{4+\abs{a}^2} \geqslant \sqrt{5}.
		\]
		Hence, the associated reciprocal polynomial $R_A$ has a zero with absolute value at least $(\sqrt{5} + 1)/2$.
		Therefore $M(R_A) \geqslant (\sqrt{5} + 1)/2 > \tau_0$.

		\paragraph{Case 2.}
		Suppose $j \ne k$.
		Then $A$ contains as a principal submatrix the matrix
		\[
			\begin{pmatrix}
				a & 2 \\
				2 & b
			\end{pmatrix}.
		\]
		If either $a$ or $b$ are nonzero then by the same argument as before $M(R_A) \geqslant (\sqrt{5} + 1)/2 > \tau_0$.
		Otherwise, if both $a$ and $b$ are zero, since 
		\[
			\begin{pmatrix}
				0 & 2 \\
				2 & 0
			\end{pmatrix}
		\]
		is cyclotomic, $A$ contains as a principal submatrix the matrix
		\[
			\begin{pmatrix}
				0 & 2 & c \\
				2 & 0 & d \\
				c & d & e
			\end{pmatrix}.
		\]
		Since $A$ is indecomposable, we can choose this submatrix so that at least one of $c$ and $d$ is nonzero.
		By applying the same argument as before we obtain the inequality
		\[
			M(R_A) \geqslant (\sqrt{5} + 1)/2 > \tau_0.
		\]
	\end{proof}
	
	Given a graph $G$, a path (respectively cycle) $P$ is called \textbf{chordless} if the subgraph of $G$ induced on the vertices of $P$ is a path (respectively cycle).
	Define the \textbf{path rank} of $G$ to be the maximal number of vertices in a chordless path or cycle of $G$.
	We say that $G$ has a \textbf{profile} if its vertices can be partitioned into a sequence of $k \geqslant 3$ subsets $\mathcal C_0,\dots, \mathcal C_{k-1}$ such that either 
	\begin{itemize}
		\item two vertices $v$ and $w$ are adjacent if and only if $v \in \mathcal C_{j-1}$ and $w \in \mathcal C_{j}$ for some $j \in \{1,\dots,k-1\}$ or $v$ and $w$ are both charged vertices in the same subset
		\item[or]
		\item two vertices $v$ and $w$ are adjacent if and only if $v \in \mathcal C_{j-1}$ and $w \in \mathcal C_{j}$ for some $j \in \Z/k\Z$ or $v$ and $w$ are both charged vertices in the same subset.
	\end{itemize} 
	In the latter case, we say that the profile is \textbf{cycling}.
	Given a graph $G$ with a profile $\mathcal C = (\mathcal C_0,\dots, \mathcal C_{k-1})$ we define the \textbf{profile rank} of $G$ to be $k$, the number of subsets in the profile $\mathcal C$.

	Later we will also need the following corollaries.

	\begin{corollary}\label{cor:notcycling1}
		Let $G$ be a connected charged non-supersporadic graph.
		Then the longest chordless cycle has length $4$.
	\end{corollary}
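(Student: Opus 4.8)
The plan is to prove that $G$ has no chordless cycle of length $\geqslant 5$; that such a $G$ in fact contains a chordless $4$-cycle --- so that, together with the upper bound, the longest has length exactly $4$ --- follows from the classification. So suppose for contradiction that $G$ contains a chordless cycle $C$ with $\ell := \abs{V(C)} \geqslant 5$. Then $G$ has at least $\ell\geqslant 5$ vertices, so by Lemmas~\ref{lem:entryGreaterThan2} and~\ref{lem:irratMustBeRoot2} every charged vertex of $G$ has charge $\pm1$, and $G$ contains no graph equivalent to one of $X_1,\dots,X_8$ in Figure~\ref{fig:nonnonsupersporads}. Because $\ell\geqslant 5$, any four consecutive vertices of $C$ induce a path on four vertices, and each vertex of $C$ occurs as an interior vertex of such a window; a charge there would produce (after an evident switching, and negation if needed) a copy of $X_1$. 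Hence $C$ is uncharged.

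Next I would pin down the shape of $G$. Fix a charged vertex $v$; as $C$ is uncharged, $v\notin V(C)$. Take a shortest path $P$ from $v$ to $C$ and let $H$ be the subgraph induced on $V(P)\cup V(C)$: it is connected, charged, and contains the chordless cycle $C$ of length $\geqslant 5$. If $H$ were a proper subgraph of $G$ it would be a proper connected subgraph of a non-supersporadic graph, hence equivalent to a subgraph of some non-sporadic maximal cyclotomic graph $M$; but $M$ would then contain a chordless cycle of length $\geqslant 5$, and inspection of Figures~\ref{fig:maxcycs1}--\ref{fig:maxcycs4} shows that the only non-sporadics with this property are the toral graphs $T_{2k}, T_{2k}^{(i)}, T_{2k}^{(\omega)}$, all uncharged --- contradicting that $H$ is charged. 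So $H=G$: the graph $G$ is the cycle $C$ with a path attached at a single vertex $u\in V(C)$, whose far endpoint is $v$. (Applying the same reasoning to the connected cyclotomic graph $G-v$, which still contains $C$, shows $G-v$ embeds in a toral non-sporadic and is therefore uncharged; so $v$ is the only charged vertex, and the path vertex adjacent to $C$ meets $C$ in exactly two vertices, two apart along $C$ --- unless $G = C\cup\{v\}$.)

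Finally I would exhibit a proper connected subgraph $H_0 \subsetneq G$ equivalent to no subgraph of any non-sporadic, contradicting that $G$ is non-supersporadic. Deleting from $G$ a vertex of $C$ that is adjacent to no vertex of the attached path and is not adjacent to $u$ (one exists since $\ell\geqslant 5$) leaves a connected graph still containing the charge $v$ together with the vertex $u$, now of degree $\geqslant 3$; this embeds in no uncharged non-sporadic, and, since in each charged non-sporadic $C_{2k}^{++}$, $C_{2k}^{+-}$, $C_{2k+1}$ every charged vertex has a charged neighbour whereas $v$ has a unique neighbour and it is uncharged, a short check against Figures~\ref{fig:maxcycs3} and~\ref{fig:maxcycs4} (the remaining adjacencies being forced to create chords) rules out an embedding there as well. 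The case $G = C\cup\{v\}$ is treated the same way, deleting a vertex of $C$ non-adjacent to $v$. The main obstacle is this last step: the deleted vertex must be chosen so that $H_0$ is genuinely an induced subgraph --- accounting for the possible extra edges between the attached path and $C$ --- and one must then carry out the finite verification that $H_0$ fails to embed in any charged non-sporadic. A secondary point requiring care is the structural claim used above, that among the non-sporadics a chordless cycle of length $\geqslant 5$ occurs only in the toral families, which is read off Figures~\ref{fig:maxcycs1}--\ref{fig:maxcycs4}.
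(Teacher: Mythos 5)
Your two-case skeleton is exactly the paper's: a charged vertex on a chordless cycle of length at least $5$ is the interior vertex of an induced $4$-vertex path and so forces a copy of $X_1$ from Figure~\ref{fig:nonnonsupersporads}, which is the paper's entire treatment of the charged-cycle case; and in the uncharged-cycle case both you and the paper delete a cycle vertex away from the attachment point of the charge and derive a contradiction from the resulting proper connected subgraph. Where you diverge is in how that contradiction is reached. The paper simply asserts that the deleted graph is non-cyclotomic, contradicting minimality of $G$; you first pin down the global shape of $G$ (cycle plus pendant path ending in the charge) by observing that a proper connected \emph{charged} subgraph containing a chordless cycle of length $\geqslant 5$ would have to embed in a non-sporadic, that only the toral families of Figure~\ref{fig:maxcycs1} contain such cycles, and that these are uncharged; you then argue the deleted graph embeds in no non-sporadic at all, contradicting non-supersporadicity rather than minimality. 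Both endpoints are legitimate, and your combinatorial one (non-embeddability, checked against Figures~\ref{fig:maxcycs1}--\ref{fig:maxcycs4}) is arguably easier to certify than the paper's spectral claim, at the cost of the extra structural analysis. Two caveats. First, your opening assertion that such a $G$ must actually \emph{contain} a chordless $4$-cycle is unjustified and false in general (a charged non-supersporadic graph can be a tree); the statement should be read, as the paper uses it, as an upper bound only. Second, your closing reason for non-embeddability --- that every charged vertex of $C_{2k}^{++}$, $C_{2k}^{+-}$, $C_{2k+1}$ has a charged neighbour while $v$ does not --- is not sufficient on its own, since an induced subgraph may simply omit that neighbour; the genuine obstruction is the degree-$3$ branch vertex on a long induced path together with the pendant charge, and the finite check you defer does still need to be carried out (though the paper is equally terse at the corresponding step).
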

	\begin{proof}
		Suppose $G$ contains a chordless cycle $C$ on at least $5$ vertices.
		\paragraph{Case 1.} 
		$C$ is uncharged.
		Since $G$ is charged, a charged vertex must be joined to the cycle via some path.
		Let $v$ be the intersection of the vertices of this path and the cycle.
		By deleting a vertex of the cycle that is not a neighbour of $v$ (and not $v$), we obtain a non-cyclotomic subgraph of $G$, which is impossible.
		\paragraph{Case 2.} 
		$C$ is charged.
		Since $G$ is non-supersporadic, each of its edge-weights and charges has norm at most $2$.
		It is not possible to construct a charged chordless sub-cycle on more than $4$ vertices without $X_1$ of Figure~\ref{fig:nonnonsupersporads} being a subgraph.
		Hence we are done.
	\end{proof}

	The next corollary follows with a proof similar to that of Corollary~\ref{cor:notcycling1}.

	\begin{corollary}\label{cor:notcycling2}
		Let $G$ be a connected non-supersporadic graph that has at least one edge-weight of norm $2$.
		Then the longest chordless cycle has length $4$.
	\end{corollary}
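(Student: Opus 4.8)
The plan is to adapt the proof of Corollary~\ref{cor:notcycling1}, with an edge-weight of norm $2$ playing the role that a charge plays there. I would begin with two reductions. First, if $G$ is charged then it is a connected charged non-supersporadic graph and Corollary~\ref{cor:notcycling1} applies directly, so we may assume $G$ is uncharged. Second, since $2$ is not the norm of any element of $\Z[\omega]$, the hypothesis forces $G$ to be a $\Z[i]$-graph, and, exactly as used in the proof of Corollary~\ref{cor:notcycling1}, every edge-weight of the non-supersporadic graph $G$ has norm at most $2$. Now suppose, for contradiction, that $G$ contains a chordless cycle $C$ on $n\geqslant 5$ vertices, and fix an edge $e=uv$ of $G$ of norm $2$; the argument splits according to whether $e$ lies on $C$.

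Suppose first that $e$ is an edge of $C$. Let $z$ and $w$ be the neighbours on $C$ of $u$ and $v$ respectively, other than $v$ and $u$. Since $C$ is chordless and $n\geqslant 5$, the vertices $z,u,v,w$ are distinct and induce a chordless path $z-u-v-w$ whose middle edge is $e$; up to equivalence this is the graph $X_2$ of Figure~\ref{fig:nonnonsupersporads}. As $G$ is a non-supersporadic graph on at least $5$ vertices, this contradicts the fact recorded in Figure~\ref{fig:nonnonsupersporads} that $X_2$ is a subgraph of no such graph. This is the analogue of Case~2 of the proof of Corollary~\ref{cor:notcycling1}, with $X_2$ in place of $X_1$.

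Suppose now that $e$ is not an edge of $C$. If some edge of $C$ had norm $2$ we would be back in the first case with that edge, so every edge of $C$ is a unit. Since $e$ is not a chord of the chordless cycle $C$, the endpoints of $e$ do not both lie on $C$; say $v\notin V(C)$. Using connectivity of $G$, I would choose a chordless path $Q$ that contains $e$, runs from $\{u,v\}$ to $C$, and meets $V(C)$ in the single vertex $y$. Because $n\geqslant 5$, there is a vertex $p$ of $C$ distinct from $y$ and not adjacent to $y$, and I would pass to the proper subgraph $H:=G\bigl[(V(C)\setminus\{p\})\cup V(Q)\bigr]$, which omits $p$. This $H$ contains the chordless path $C-p$, on $n-1\geqslant 4$ vertices and with $y$ an interior vertex, joined at $y$ to the path $Q$, which carries the norm-$2$ edge $e$. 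The claim to establish is that $H$ is non-cyclotomic; this contradicts the minimality of $G$.

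The hard part will be this last claim, the analogue of the assertion in the proof of Corollary~\ref{cor:notcycling1} that deleting a suitable cycle vertex ``obtains a non-cyclotomic subgraph''. One must verify that $H$ has spectral radius greater than $2$, and because the shape of $H$ depends both on where along $Q$ the norm-$2$ edge sits and on what further adjacencies $Q$ has to $C-p$, this calls for a short case analysis in which $H$ is pruned to a subgraph on a few vertices. The governing point is that a charge-free path or tree bearing a single norm-$2$ edge becomes non-cyclotomic once it is large enough: for instance, the tree obtained from a path $a-b-c$ by attaching two pendant edges at $c$, one of which has norm $2$, has characteristic polynomial $x(x^{4}-5x^{2}+3)$, whose largest zero is $\sqrt{(5+\sqrt{13})/2}>2$, and by interlacing it suffices to find one such configuration inside $H$. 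Carrying out this finite verification --- by the usual deletion recursion for characteristic polynomials together with interlacing --- completes Case~2 and hence the proof.
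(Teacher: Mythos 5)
The paper gives no independent proof of this corollary---it simply asserts that the argument of Corollary~\ref{cor:notcycling1} carries over---and your proposal is precisely that adaptation: a norm-$2$ edge lying on the chordless cycle yields the excluded subgraph $X_2$ (playing the role $X_1$ plays in the charged case), while a norm-$2$ edge off the cycle is handled by deleting a cycle vertex away from the attachment point to produce a non-cyclotomic proper subgraph, contradicting minimality. This matches the paper's intended approach; your preliminary reductions (to uncharged $\Z[i]$-graphs) and the explicit interlacing verification sketched for the second case are harmless elaborations of what the paper leaves implicit.
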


	\begin{corollary}\label{cor:chargedends}
		Let $G$ be a connected charged non-supersporadic graph having path rank $r \geqslant 5$.
		If $G$ has a profile $\mathcal C$ then $\mathcal C$ is not cycling and the charged vertices must be contained in columns at either end of $\mathcal C$. 
	\end{corollary}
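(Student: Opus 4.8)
I would exploit the rigidity of a profile. By definition, if $G$ has a profile $\mathcal C=(\mathcal C_0,\dots,\mathcal C_{k-1})$ then two vertices of $G$ are adjacent exactly when they lie in consecutive columns (cyclically, if $\mathcal C$ is cycling) or they are both charged and lie in one common column; so the underlying graph of $G$ is a blow-up of a path $P_k$ (respectively a cycle $C_k$) together with a clique on the charged vertices of each column. First I would observe that a chordless path of $G$ changes column by exactly $\pm 1$ at each step and visits each column at most once: a within-column (charge) edge, or a backtrack in the column sequence, produces a chord as soon as the path has three, respectively four, vertices. Hence every chordless path of $G$ has at most $k$ vertices. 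Since $G$ is connected, charged and non-supersporadic, Corollary~\ref{cor:notcycling1} bounds the length of a chordless cycle by $4$, so the path rank $r\geqslant 5$ is attained by a chordless path and therefore $k\geqslant r\geqslant 5$.

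Suppose $\mathcal C$ were cycling. Discarding any empty column turns $\mathcal C$ into a linear profile (the column in question no longer links its neighbours), so we may assume every column is nonempty; then choosing one vertex $v_i\in\mathcal C_i$ for each $i$ gives, by the blow-up description, an induced $k$-cycle $v_0-v_1-\cdots-v_{k-1}-v_0$ of $G$, forcing $k\leqslant 4$ by Corollary~\ref{cor:notcycling1}, against $k\geqslant 5$. So $\mathcal C$ is not cycling, and from now on $\mathcal C=(\mathcal C_0,\dots,\mathcal C_{k-1})$ is linear with $k\geqslant 5$ and genuine end columns $\mathcal C_0$, $\mathcal C_{k-1}$.

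Now assume, for a contradiction, that a charged vertex $v$ lies in an interior column $\mathcal C_j$ with $1\leqslant j\leqslant k-2$. A charge of modulus $2$ is immediately impossible, since then $v$ together with any neighbour spans a non-cyclotomic proper subgraph of $G$; so, up to equivalence, the charge of $v$ is $+1$. Pick $u\in\mathcal C_{j-1}$ and $w\in\mathcal C_{j+1}$; then $u\sim v\sim w$ and $u\not\sim w$. As $k\geqslant 5$, at least one of $\mathcal C_{j-2}$, $\mathcal C_{j+2}$ exists; a vertex $u'\in\mathcal C_{j-2}$ there satisfies $u'\sim u$, $u'\not\sim v,w$ (respectively $w'\in\mathcal C_{j+2}$ satisfies $w'\sim w$, $w'\not\sim u,v$). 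I would now argue on the charges of $u,w$, of $u'$ or $w'$, and of the remaining vertices of $\mathcal C_{j-1},\mathcal C_j,\mathcal C_{j+1}$. Whenever two charged vertices sit near $v$ in one of the patterns of Figure~\ref{fig:nonnonsupersporads} — an adjacent oppositely-charged pair ($X_3$), two like-charged vertices adjacent to a third charge ($X_5$), a pair of charges joined through an uncharged vertex ($X_6$, $X_7$), the norm-$2$ analogues ($X_4$, $X_8$), or an all-charged $P_4$ ($X_1$, $X_2$) — that subgraph occurs in $G$, contradicting that $G$ is non-supersporadic with at least $5$ vertices. Otherwise $u,w$ and the chosen $u'$ or $w'$ are uncharged and $v$ is the only charge among the chosen vertices, so $u'-u-v-w$, or $u-v-w-w'$ according to which side is available, is an induced path on four vertices bearing a single charge at an interior vertex; interlacing against the charged $P_3$ inside it already gives largest eigenvalue $\geqslant 2$, and a short computation of its characteristic polynomial makes this strict. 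Thus $G$ has a non-cyclotomic proper induced subgraph, contradicting its minimality. Hence every charged vertex of $G$ lies in $\mathcal C_0$ or $\mathcal C_{k-1}$, which is the assertion.

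The routine work is checking that the four-vertex graphs produced are indeed non-cyclotomic and the bookkeeping over the charge patterns. The genuine obstacle is the boundary of the profile, $j=1$ or $j=k-2$, where $v$ has only one interior neighbour and one must instead follow that neighbour into the charge-clique of the end column; there one has to list every resulting small graph and recognise it as one of the $X_i$ or as a non-cyclotomic $P_4$-type graph, the hypotheses that $G$ is charged, non-supersporadic and has at least $5$ vertices being precisely what rules out each surviving case.
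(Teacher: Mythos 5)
Your overall strategy is the paper's: rule out a cycling profile via Corollary~\ref{cor:notcycling1}, then show that a charge in an interior column produces a forbidden small induced subgraph. The non-cycling half is fine and fills in details the paper omits. The second half, however, is both over-complicated and, as written, has a hole. The paper's entire argument for that half is one application of $X_1$ from Figure~\ref{fig:nonnonsupersporads}: since the $\ast$ symbol denotes \emph{any} charge (including zero --- ``all vertices are equivalent to'' the $\ast$-vertex), $X_1$ is precisely an induced path on four vertices whose second vertex is charged, with no constraint on the other three. If $v$ is charged and lies in an interior column $\mathcal C_j$ of a non-cycling profile with $k\geqslant 5$ columns, then (since $j\leqslant k-3$ or $j\geqslant 2$ always holds) picking one vertex from each of $\mathcal C_{j\mp1},\mathcal C_{j\pm1},\mathcal C_{j\pm2}$ gives such an induced $P_4$ with $v$ in second position, and one is done. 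Your ``genuine obstacle'' at $j=1$ or $j=k-2$ is not an obstacle for the same reason: one simply extends two columns on whichever side has room, and never needs to enter the charge-clique of an end column.

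The hole: your dichotomy ``either two charges near $v$ realise one of the listed patterns, or $u,w$ and the chosen $u'$ (or $w'$) are all uncharged'' is not exhaustive under your own reading of the $X_i$. For instance, $u\in\mathcal C_{j-1}$ charged $+1$ adjacent to $v$ charged $+1$ with a weight-$1$ edge and all other chosen vertices uncharged matches none of the configurations you list (it is not an oppositely-charged pair, not ``two like charges adjacent to a third charge'', not an all-charged $P_4$), yet it is also not your ``otherwise'' case. A like-charged adjacent pair does occur inside $C_{2k}^{++}$, so it cannot be excluded outright; what is excluded is that pair sitting at an interior position, i.e.\ the induced $P_3$ on $u,v,w$ --- which is exactly the paper's $X_5$ once $\ast$ is read as ``any charge, possibly zero'', or alternatively one checks directly that the $P_4$ with charges $(1,1,0,0)$ has an eigenvalue exceeding $2$. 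So the gap is repairable by the same kind of computation you already do for the single-charge $P_4$ (that computation, and the interlacing remark, are correct), but the case analysis must actually be completed; the clean way to do so is to read the $\ast$'s in Figure~\ref{fig:nonnonsupersporads} correctly, after which your entire second half collapses to the paper's one-line appeal to $X_1$.
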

	\begin{proof}
		By Corollary~\ref{cor:notcycling1}, the profile $\mathcal C$ of $G$ cannot be cycling.
		The subgraph $X_1$ of Figure~\ref{fig:nonnonsupersporads} which cannot be equivalent to a subgraph of $G$ forces the charges of $G$ to be in the first or last column of $\mathcal C$.
	\end{proof}

	Again, the next corollary has essentially the same proof.

	\begin{corollary}\label{cor:weight2ends}
		Let $G$ be a connected non-supersporadic graph that has at least one edge-weight of norm $2$ and has path rank $r \geqslant 5$.
		If $G$ has a profile $\mathcal C$ then $\mathcal C$ is not cycling and the edges of norm $2$ must be between vertices of the first two or the last two columns of $\mathcal C$.
	\end{corollary}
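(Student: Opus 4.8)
The plan is to copy the proof of Corollary~\ref{cor:chargedends} almost word for word, using the forbidden graph $X_2$ of Figure~\ref{fig:nonnonsupersporads} in place of $X_1$ and Corollary~\ref{cor:notcycling2} in place of Corollary~\ref{cor:notcycling1}. First I would dispose of a cycling profile exactly as there. Since $G$ has an edge-weight of norm $2$, Corollary~\ref{cor:notcycling2} tells us that every chordless cycle of $G$ has length at most $4$. A cycling profile on $k$ subsets $\mathcal C_0,\dots,\mathcal C_{k-1}$ contains a chordless cycle of length $k$ (take one vertex from each subset), so $k\leqslant 4$; but in a cycling profile on at most four subsets any chordless path also has at most four vertices, since running once around the profile produces a chord. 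Hence the path rank would be at most $4$, contradicting $r\geqslant 5$. So $\mathcal C=(\mathcal C_0,\dots,\mathcal C_{k-1})$ is not cycling.

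Next I would exploit the rigidity of a non-cycling profile. As $G$ is connected, each $\mathcal C_i$ is nonempty and, by the ``if and only if'' in the definition, every vertex of $\mathcal C_i$ is adjacent to every vertex of $\mathcal C_{i+1}$, while two vertices whose subsets have indices differing by at least $2$ are never adjacent (the only further adjacencies being between charged vertices of a common subset). Suppose for contradiction that an edge of norm $2$ joins a vertex $v\in\mathcal C_{j-1}$ to a vertex $w\in\mathcal C_j$ with $2\leqslant j\leqslant k-2$. Pick any $u\in\mathcal C_{j-2}$ and any $x\in\mathcal C_{j+1}$. Then $u$ is adjacent to $v$, $v$ to $w$ and $w$ to $x$, whereas the pairs $\{u,w\}$, $\{v,x\}$ and $\{u,x\}$ are non-adjacent; thus $\{u,v,w,x\}$ induces a four-vertex path whose middle edge has norm $2$, that is, a copy of $X_2$. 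Since $r\geqslant 5$ forces $G$ to have at least five vertices, Figure~\ref{fig:nonnonsupersporads} makes this impossible. Therefore $j\in\{1,k-1\}$, which says precisely that the norm-$2$ edge lies between the first two or the last two columns of $\mathcal C$. The one remaining possibility, a norm-$2$ edge joining two charged vertices of a single subset, is covered by Corollary~\ref{cor:chargedends}: in that case $G$ is charged, so its charged vertices, hence that edge, lie in $\mathcal C_0$ or $\mathcal C_{k-1}$, again among the first two or the last two columns.

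I expect the genuinely delicate point to be the first paragraph, namely checking that $r\geqslant 5$ really is incompatible with a cycling profile; this, however, uses nothing beyond the standard facts about chordless paths and cycles in a profile that are already used in the proof of Corollary~\ref{cor:chargedends}. Once the profile is known to be non-cycling, the $X_2$ step above is forced purely by the complete bipartite structure between consecutive columns and needs no case analysis, so the whole argument should reduce to essentially two lines, in accordance with the remark preceding the statement.
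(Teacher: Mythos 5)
Your proposal is correct and follows exactly the route the paper intends: the paper's entire proof is the remark that the argument of Corollary~\ref{cor:chargedends} carries over, with Corollary~\ref{cor:notcycling2} ruling out a cycling profile and the forbidden subgraph $X_2$ of Figure~\ref{fig:nonnonsupersporads} forcing the norm-$2$ edges to the ends. Your additional details (the chordless $k$-cycle inside a cycling profile, and the explicit $4$-vertex path $u,v,w,x$ realising $X_2$) are just the implicit steps spelled out.
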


	In order to classify all minimal non-cyclotomic graphs we must consider all possible ways of attaching a single vertex to every cyclotomic graph.
	As it stands, we need to test an infinite number of graphs, but we will reduce the amount of work required, so that it suffices to test all the supersporadic graphs (of which there are only finitely many) and the non-supersporadic graphs on up to $10$ vertices.

	\subsection{Reduction to a finite search}

	In this section we reduce the search for minimal non-cyclotomic matrices to a finite one.
	Proposition~\ref{pro:unchargedRed} below, enables us to restrict our search for minimal non-cyclotomic graphs to a search of all non-supersporadic graphs on up to $10$ vertices and all minimal non-cyclotomic supersporadic graphs.

	\begin{lemma}
	\label{lem:profr5}
		Let $G$ be equivalent to a connected subgraph of a non-sporadic graph.
		If $G$ has path rank at least $5$ then this equals its profile rank, and its columns are uniquely determined.
		Moreover, their order is determined up to reversal or cycling.
	\end{lemma}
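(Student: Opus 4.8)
The plan is to exploit the explicit ``ladder'' shape of the non-sporadic graphs. Recall from Figures~\ref{fig:maxcycs1}--\ref{fig:maxcycs4} that every non-sporadic graph $\Gamma$ carries a distinguished partition of its vertex set into columns $\mathcal D_0,\dots,\mathcal D_{K-1}$, each of size at most $2$, such that two vertices in consecutive columns (consecutive cyclically, when $\Gamma$ is of type $T$) are always adjacent, two vertices in non-consecutive columns are never adjacent, and the only within-column edges occur in the charged end columns of $C_{2k}^{++}$, $C_{2k}^{+-}$ and $C_{2k+1}$. First I would observe that a connected induced subgraph $G$ of such a $\Gamma$ inherits a profile: deleting an interior column of a $C$-type graph, or any two columns of a $T$-type graph, disconnects it, so the columns met by $G$ form a contiguous block (a contiguous arc, or all of $\Gamma$, when $\Gamma$ is of type $T$); listing the non-empty traces $\mathcal D_j\cap V(G)$ in that order yields a profile $\mathcal C=(\mathcal C_0,\dots,\mathcal C_{k-1})$ of $G$, cycling exactly when $\Gamma$ is of type $T$ and $G$ meets every column. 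As the charged columns of $\Gamma$ are end columns, any charges of $G$ lie in $\mathcal C_0$ or $\mathcal C_{k-1}$, and in particular $\mathcal C$ is not cycling if $G$ is charged.

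Next I would prove that the path rank of $G$ equals the profile rank of $\mathcal C$. The crux is that every chordless path and every chordless cycle of $G$ is \emph{transversal}: it meets each column $\mathcal C_j$ in at most one vertex. Suppose a chordless path $P$ contained two vertices $a,b$ of a column $\mathcal C_j$. If $a\not\sim b$, then, since $G$ has no edges across non-consecutive columns whereas every vertex of $\mathcal C_{j-1}\cup\mathcal C_{j+1}$ is adjacent to both $a$ and $b$, tracing $P$ outward from $a$ and from $b$ and using $\abs{\mathcal C_{j\pm1}}\leqslant 2$ forces a vertex of $P$ adjacent to $a$ or to $b$ but not consecutive to it on $P$ --- a chord. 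If $a\sim b$, then $\mathcal C_j$ is a charged end column, and the edge $ab$ together with the continuations of $P$ forces the two vertices of the neighbouring column onto $P$, each adjacent to both $a$ and $b$, again a chord; chordless cycles are handled the same way. Since consecutive vertices of a path lie in consecutive columns, a transversal path runs through a block of consecutive columns and a transversal cycle through all $k$ of them, so path rank $\leqslant k$. Conversely, one vertex from each $\mathcal C_j$, taken in order, forms a path (a cycle, if $\mathcal C$ is cycling) whose only conceivable chords would join non-consecutive columns; it is therefore chordless, so path rank $\geqslant k$. Hence path rank $=k=$ profile rank.

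For the uniqueness clause, write $r$ for the path rank and fix a chordless path or cycle $P=v_0v_1\cdots v_{r-1}$ with $r\geqslant 5$, numbered so $v_j\in\mathcal C_j$. In \emph{any} profile of $G$ a chordless path of length at least $4$ is transversal: by the complete-bipartite structure between consecutive columns of a profile, two vertices of a common column can only occur as the two endpoints of a chordless path of length $3$. Hence $P$ meets each column of any profile exactly once, and comparing with the transversal path or cycle of that profile shows every profile of $G$ has exactly $r$ columns, all met by $P$. The resulting bijection between the columns of two profiles carries consecutive columns to consecutive columns, so it is a reversal of the linear order or a dihedral symmetry of the cyclic order; after reversing or cycling we may assume $v_j$ lies in the $j$-th column of both profiles. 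Finally, for $w\in V(G)$ the set $N_G(w)\cap\{v_0,\dots,v_{r-1}\}$ equals $\{v_{j-1},v_{j+1}\}$ when $w$ lies in an interior $j$-th column and is contained in $\{v_0,v_1\}$, respectively $\{v_{r-2},v_{r-1}\}$, for the two end columns; because $r\geqslant 5$ these descriptions are mutually exclusive, so each $w$ is assigned to a unique column and the two profiles coincide. Thus the columns are uniquely determined, and their order up to reversal or cycling.

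The step I expect to be the main obstacle is the transversality claim of the second paragraph: one has to check with some care that a chordless path or cycle cannot re-enter a column, carrying out the short case analysis at the charged end columns (where within-column edges are present) without omitting a configuration. Everything else then follows formally, and the hypothesis $r\geqslant 5$ is used exactly where it is needed --- to rule out the small ladders ($T_6$, $T_8$) whose extra symmetry would otherwise permit alternative profiles.
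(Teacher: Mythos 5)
Your argument is correct in substance, but it is worth knowing that the paper does not actually write out a proof here: it simply says the statement ``follows from the proof of [Lemma 6]'' of McKee--Smyth's integer symmetric matrix paper, i.e.\ it asserts that the argument for $\Z$-graphs carries over to the $\Z[i]$- and $\Z[\omega]$-ladders. What you have written is, in effect, that transferred argument made explicit: the column structure of the non-sporadic families, transversality of long chordless paths, and reconstruction of the columns from neighbourhoods along a fixed maximal chordless path or cycle. Two small imprecisions to tidy up. First, the blanket claim that \emph{every} chordless path and cycle is transversal is false: a $3$-vertex chordless path can have both endpoints in one column (your own third paragraph concedes exactly this), and a chordless $4$-cycle meets two columns twice; the claim should be stated, as you only ever use it, for chordless paths on at least $4$ vertices and cycles on at least $5$. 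Second, in the final neighbourhood computation the set $N_G(w)\cap\{v_0,\dots,v_{r-1}\}$ can also equal $\{v_{j-1},v_j,v_{j+1}\}$ if $w$ and $v_j$ were both charged and shared an interior column of some putative profile; this does not damage the conclusion (the possible neighbourhood patterns still determine the column of $w$ uniquely once $r\geqslant 5$), but the case should be acknowledged or ruled out. With those repairs your write-up is a complete, self-contained proof of the lemma, which is arguably more than the paper itself provides.
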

	\begin{proof}
	Follows from the proof of \cite[Lemma 6]{McKee:noncycISM09}.
	\end{proof}

	\begin{lemma}
	  \label{lem:profn8}
	  Let $G$ be an $n$-vertex proper connected subgraph of a non-sporadic graph where $n \geqslant 8$.
	  Then its path rank equals its profile rank and its columns are uniquely determined.
	Moreover, their order is determined up to reversal or cycling.
	\end{lemma}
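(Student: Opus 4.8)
The plan is to reduce the statement to Lemma~\ref{lem:profr5} by showing that an $n$-vertex proper connected subgraph of a non-sporadic graph with $n \geqslant 8$ necessarily has path rank at least $5$. Once that is established, Lemma~\ref{lem:profr5} delivers the conclusion verbatim: the path rank equals the profile rank, the columns are uniquely determined, and their order is determined up to reversal or cycling. So the entire content of the proof is the path-rank bound.

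First I would recall that every non-sporadic maximal graph is one of $T_{2k}$, $T_{2k}^{(x)}$, $C_{2k}$, $C_{2k}^{++}$, $C_{2k}^{+-}$, or $C_{2k+1}$, each of which (reading off Figures~\ref{fig:maxcycs1}--\ref{fig:maxcycs4}) carries an evident profile structure with columns of size at most $2$: for the toral families $T$ the columns are the vertical pairs and the profile is cycling, while for the $C$ families the columns are the vertical pairs together with the single end-vertices. Hence a non-sporadic graph on $m$ vertices has profile rank roughly $m/2$, and a chordless path or cycle that visits each column runs the full length of the profile. The key point is that in each of these maximal graphs one can exhibit, for every column, a choice of one vertex from each consecutive column giving a chordless path (or chordless cycle in the cycling case) of length equal to the profile rank — this is exactly the ``diagonal'' path through the tessellation. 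Restricting to a connected subgraph $G$ on $n \geqslant 8$ vertices, $G$ occupies a contiguous block of at least $\lceil n/2 \rceil \geqslant 4$ columns; a short case check on how the subgraph can sit inside the maximal graph then shows $G$ contains a chordless path on at least $5$ vertices (since even with columns of size $2$, eight vertices force at least four columns, and within four consecutive columns of any of these maximal graphs there is always a chordless path of length $5$, using an endpoint column or a ``corner'' turn when $G$ does not span enough columns).

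The second step, once path rank $\geqslant 5$ is in hand, is simply to invoke Lemma~\ref{lem:profr5}. There is nothing further to prove: the statement of Lemma~\ref{lem:profn8} is literally the conclusion of Lemma~\ref{lem:profr5} under the hypothesis ``path rank at least $5$'', and we have just verified that hypothesis holds for all $n \geqslant 8$. I would also note the edge case $n = 8$ against columns of size $2$: even if $G$ were a disjoint-looking block of exactly four columns each of size $2$, connectedness forces the cross edges between consecutive columns, and one extracts a chordless path alternating sides; if some column has size $1$ the bound only gets easier.

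The main obstacle is the careful verification that four consecutive columns of each maximal non-sporadic graph always contain a chordless path (or cycle) on five vertices, including the awkward configurations near the ends of the $C$-families and the identification of the two copies of $A$ and $B$ in the toral families $T$. This is a finite, diagram-chasing check over the handful of families in Figures~\ref{fig:maxcycs1}--\ref{fig:maxcycs4}, and it is the only place where genuine work beyond citing Lemma~\ref{lem:profr5} is needed; I expect the authors in fact dispatch it exactly as they did Lemma~\ref{lem:profr5}, by pointing to the corresponding argument in \cite{McKee:noncycISM09}, since the combinatorics of the columns is identical to the integer-symmetric case and the edge-weights play no role in the path-rank count.
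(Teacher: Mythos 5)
Your reduction hinges on the claim that every proper connected subgraph of a non-sporadic graph on $n \geqslant 8$ vertices has path rank at least $5$, and that claim is false at $n = 8$. Take four consecutive interior columns of $T_{2k}$ for large $k$: this is an $8$-vertex connected proper subgraph in which each pair of consecutive columns induces a complete bipartite $K_{2,2}$. Any chordless path must use strictly monotone column indices, because a ``corner turn'' puts two non-consecutive path vertices into adjacent columns and hence creates a chord; so the longest chordless path has exactly $4$ vertices (and the only chordless cycles are $4$-cycles). This is precisely the ``awkward configuration'' you wave away with ``a short case check'' and the suggestion of using ``a corner turn'' --- the corner turn is exactly what the $K_{2,2}$ structure forbids. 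The paper itself only asserts that $n \geqslant 9$ forces path rank at least $5$, and explicitly isolates the residual case $n = 8$ with path rank $4$.

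For that residual case the paper does not invoke Lemma~\ref{lem:profr5} at all; it argues directly. It shows the maximal chordless subgraph $P$ may be taken to be a path rather than a cycle (every proper connected $8$-vertex subgraph of a non-sporadic graph contains a chordless $4$-path, so a maximal chordless cycle cannot strictly beat all paths), observes that the columns of the profile restricted to $P$ are singletons, and then uses the fact that the profile is not cycling to conclude that the column of each remaining vertex is forced by its adjacencies to $P$, which pins down the profile uniquely. Your proposal as written is therefore incomplete: the first step (path rank $\geqslant 5$) only covers $n \geqslant 9$, and you are missing the entire argument for $n = 8$ with path rank $4$.
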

	\begin{proof}
	 If $G$ has path rank at least $5$ then we can apply Lemma~\ref{lem:profr5}.
	 Since having at least $9$ vertices forces $G$ to have path rank at least $5$, we can assume that $n=8$ and that $G$ has path rank $4$.
	Let $P$ be a chordless path or cycle with maximal number of vertices $r$.
	If the maximal cycle length is equal to the maximal path length, then take $P$ to be a path.
	  Now, every \emph{proper} connected $8$-vertex subgraph of a non-sporadic graph contains a chordless path on $4$ vertices.
	  Therefore, $P$ must be a path.
	  The columns of the profile of $P$ inherited from that of $G$ are singletons.
	  Because the profile is not cycling, the column to which a new vertex can be added is completely determined by the vertices it is adjacent to in $G$.
	\end{proof}

		\begin{figure}[htbp]
			\centering
				\begin{tikzpicture}
				\begin{scope}[scale=0.8]	
					\foreach \pos/\name/\sign/\charge in {{(0,0)/a/zc/{}}, {(1,1.4)/b/pc/{\ast}}, {(2,0)/c/vertex/{}}, {(0,-0.5)/d/empty/{}}}
						\node[\sign] (\name) at \pos {$\charge$}; 
					\node at (1,-0.8) {$Y_1$};
					\foreach \edgetype/\source/ \dest /\weight in {pedge/a/b/{\ast}, pedge/c/b/{\ast}, pedge/c/a/{\ast}}
					\path[\edgetype] (\source) -- node[weight2] {$\weight$} (\dest);
				\end{scope}
				\end{tikzpicture}
				\begin{tikzpicture}
				\begin{scope}[scale=0.8]	
					\foreach \pos/\name/\sign/\charge in {{(0,0)/a/pc/{+}}, {(1,1.4)/b/zc/{}}, {(2,0)/c/pc/{+}}, {(0,-0.5)/d/empty/{}}}
						\node[\sign] (\name) at \pos {$\charge$}; 
					\node at (1,-0.8) {$Y_2$};
					\foreach \edgetype/\source/ \dest /\weight in {pedge/a/b/{}, pedge/c/b/{}, pedge/c/a/{}}
					\path[\edgetype] (\source) -- node[weight] {$\weight$} (\dest);
				\end{scope}
				\end{tikzpicture}
				\begin{tikzpicture}[scale=1]
					\begin{scope}[xshift=5.5cm, yshift=-0.5cm]	
						\foreach \pos/\name in {{(0,0)/a}, {(0,1)/b}, {(1,1)/c}, {(1,0)/d}}
							\node[vertex] (\name) at \pos {}; 
						\node at (0.5,-0.8) {$Y_3$};
						\foreach \edgetype/\source/ \dest /\weight in {pedge/a/b/{}, pedge/b/c/{}, wwedge/c/d/{}, wwedge/a/d/{}}
						\path[\edgetype] (\source) -- (\dest);
					\end{scope}
					\end{tikzpicture}
				\begin{tikzpicture}[scale=1]
					\begin{scope}[xshift=5.5cm, yshift=-0.5cm]	
						\foreach \pos/\name in {{(0,0)/a}, {(0,1)/b}, {(1,1)/c}, {(1,0)/d}}
							\node[vertex] (\name) at \pos {}; 
						\node at (0.5,-0.8) {$Y_4$};
						\foreach \edgetype/\source/ \dest /\weight in {pedge/a/b/{}, pedge/b/c/{}, pedge/c/d/{}, wedge/a/d/{}}
						\path[\edgetype] (\source) -- node[weight] {$\weight$} (\dest);
					\end{scope}
					\end{tikzpicture}
					\begin{tikzpicture}
					\begin{scope}[yshift=-0.5cm,scale=1]	
						\foreach \pos/\name in {{(0,0)/a}, {(0,1)/b}, {(1,1)/c}, {(1,0)/d},{(2,0)/f}}
							\node[vertex] (\name) at \pos {}; 
						\node at (1,-0.8) {$Y_5$};
						\foreach \edgetype/\source/ \dest /\weight in {pedge/a/b/{}, pedge/b/c/{}, pedge/c/d/{}, pedge/a/d/{}, pedge/d/f/{}}
						\path[\edgetype] (\source) -- node[weight] {$\weight$} (\dest);
					\end{scope}
					\end{tikzpicture}
					\begin{tikzpicture}[scale=1]
				\begin{scope}[xshift=2.75cm, yshift=-0.5cm]	
					\foreach \pos/\name in {{(0,0)/a}, {(0,1)/b}, {(1,1)/c}, {(1,0)/d}, {(2,1)/e}, {(2,0)/f}}
						\node[vertex] (\name) at \pos {}; 
					\node at (1,-0.8) {$Y_6$};
					\foreach \edgetype/\source/ \dest /\weight in {pedge/a/b/{}, pedge/c/d/{}, pedge/a/d/{}, pedge/d/f/{}, pedge/e/f/{}}
					\path[\edgetype] (\source) -- node[weight] {$\weight$} (\dest);
				\end{scope}
				\end{tikzpicture}

				\begin{tikzpicture}
				\begin{scope}[scale=1]
					\newdimen\rad
					\rad=0.6cm

					\foreach \x in {90,162,234,306,378}
					{
				    	\draw (\x:\rad) node[vertex] {};
						\draw[pedge] (\x:\rad) -- (\x+72:\rad);
				    }
					\draw (338:1.3cm) node[vertex] {};
				\draw[pedge] (306:\rad) -- (338:1.3cm);
				\end{scope}
				\begin{scope}[xshift=-0.2cm, yshift=-0.5cm]
					\node at (0.5,-0.8) {$Y_7$};
				\end{scope}
			\end{tikzpicture}
			\begin{tikzpicture}
				\begin{scope}[scale=1]
					\newdimen\rad
					\rad=0.6cm

					\foreach \x in {90,162,234,306,378}
					{
				    	\draw (\x:\rad) node[vertex] {};
						\draw[pedge] (\x:\rad) -- (\x+72:\rad);
				    }
				    \draw[wedge] (234:\rad) -- (306:\rad);
					\draw (338:1.3cm) node[vertex] {};
				\draw[pedge] (306:\rad) -- (338:1.3cm);
				\end{scope}
				\begin{scope}[xshift=-0.2cm, yshift=-0.5cm]
					\node at (0.5,-0.8) {$Y_8$};
				\end{scope}
			\end{tikzpicture}
			\begin{tikzpicture}
				\begin{scope}[scale=1, auto]
					\foreach \pos/\name/\sign/\charge in {{(0,0)/a1/pc/+}, {(1,1)/b1/pc/\ast}, {(2,1)/c1/pc/\ast}, {(2,0)/d1/pc/\ast}, {(0,1)/c2/pc/\ast}}
						\node[\sign] (\name) at \pos {$\charge$};
					\foreach \edgetype/\source/ \dest in {pedge/a1/b1, pedge/b1/c1, pedge/d1/c1, pedge/b1/c2}
						\path[\edgetype] (\source) -- (\dest);
					\node at (1,-0.8) {$Y_9$};
				\end{scope}
			\end{tikzpicture}
			\begin{tikzpicture}
				\begin{scope}[scale=1, auto]
					\foreach \pos/\name/\sign/\charge in {{(0,0)/a1/pc/\ast}, {(1,1)/b1/pc/\ast}, {(2,1)/c1/pc/\ast}, {(2,0)/d1/pc/\ast}, {(0,1)/c2/pc/\ast}}
						\node[\sign] (\name) at \pos {$\charge$};
					\foreach \edgetype/\source/ \dest in {wwedge/a1/b1, pedge/b1/c1, pedge/d1/c1, pedge/b1/c2}
						\path[\edgetype] (\source) -- (\dest);
					\node at (1,-0.8) {$Y_{10}$};
				\end{scope}
			\end{tikzpicture}
			\caption{Some $\Z[i]$-graphs that are \emph{not} subgraphs of any non-supersporadic graph on at least $10$ vertices.}
			\label{fig:Ygraphs}
		\end{figure}
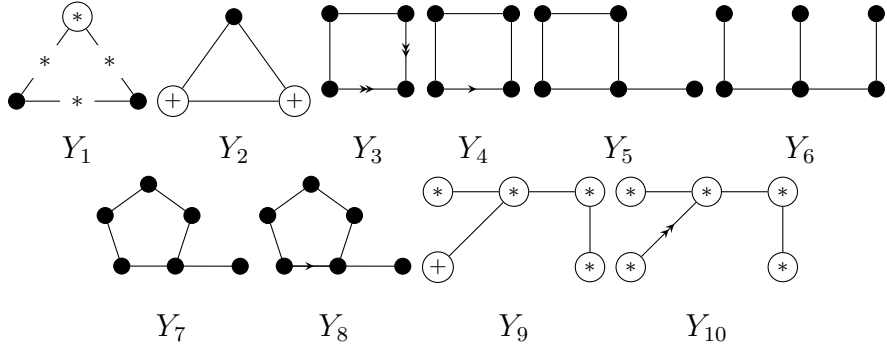

	The graphs in Figure~\ref{fig:Ygraphs} are \emph{not} subgraphs of any non-supersporadic graph on at least $10$ vertices.
	In the following proposition we shall treat only the ring $\Z[i]$ since the arguments are the same, if not slightly simpler, for $\Z[\omega]$.


	\begin{proposition}
		\label{pro:unchargedRed}
	 Let $G$ be a connected non-supersporadic $\Z[i]$-graph with $n \geqslant 10$ vertices.
	Then $G$ is equivalent to a subgraph of a non-sporadic $\Z[i]$-graph.
	\end{proposition}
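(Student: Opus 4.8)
The strategy mirrors that of McKee and Smyth for integer symmetric matrices: delete one vertex, put the profile machinery of Lemmas~\ref{lem:profr5} and~\ref{lem:profn8} to work on what is left, and then show that the deleted vertex can be reattached only so as to extend the profile, forcing $G$ into one of the six non-sporadic $\Z[i]$-families.

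First I would take a non-cut vertex $v$ of $G$ (a leaf of a spanning tree will do) and set $H := G - v$, a connected graph on $n - 1 \geqslant 9$ vertices. Being a proper connected subgraph of the non-supersporadic graph $G$, $H$ is equivalent to a subgraph of a non-sporadic $\Z[i]$-graph; and since every chordless path of $H$ remains chordless in $G$, both $H$ and $G$ have path rank at least $5$ (that $H$, having at least $9$ vertices, has path rank at least $5$ is read off from the shapes of the non-sporadic families, as in the proof of Lemma~\ref{lem:profn8}). Hence Lemma~\ref{lem:profr5} equips $H$ with a profile $\mathcal{C} = (\mathcal{C}_0,\dots,\mathcal{C}_{r-1})$ whose rank $r \geqslant 5$ equals its path rank, whose columns are uniquely determined, and whose order is fixed up to reversal and, if the profile cycles, up to cycling. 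Inspecting Figures~\ref{fig:maxcycs1}--\ref{fig:maxcycs4} shows every column of $\mathcal{C}$ has at most two vertices; any charges of $H$ lie in $\mathcal{C}_0$ or $\mathcal{C}_{r-1}$ (Corollary~\ref{cor:chargedends}) and any norm-$2$ edge of $H$ lies within the first two or the last two columns (Corollary~\ref{cor:weight2ends}). Finally, since $G$ is a non-supersporadic graph on at least $10$ vertices, it contains none of $X_1,\dots,X_8$ (Figure~\ref{fig:nonnonsupersporads}) and none of $Y_1,\dots,Y_{10}$ (Figure~\ref{fig:Ygraphs}).

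The crux is to pin down the neighbourhood $N := N_G(v) \subseteq V(H)$. The forbidden configurations $X_1,\dots,X_8$ and $Y_1,\dots,Y_{10}$, none of which may occur in $G$, sharply restrict how $v$ can meet the columns of $\mathcal{C}$: a chordless triangle or $4$- or $5$-cycle through $v$, or $v$ hanging as a pendant from a degree-three vertex of $H$, is ruled out except for the tightly constrained shapes that occur at the ends of $C_{2k}^{++}$, $C_{2k}^{+-}$ and $C_{2k+1}$ (where charges appear) or of $C_{2k}$ (where a norm-$2$ edge appears), and the $X_i$ pin down the admissible edge-weights incident with $v$. Combining this with Corollaries~\ref{cor:notcycling1}--\ref{cor:weight2ends} I would conclude that either $v$ is adjacent exactly to all of an end column $\mathcal{C}_0$ or $\mathcal{C}_{r-1}$, so that it becomes a new extreme column of a profile of $G$; or $v$ is adjacent to all of $\mathcal{C}_{j-1}\cup\mathcal{C}_{j+1}$ for some interior index $j$ with $\mathcal{C}_j$ a singleton (and possibly to the single vertex of $\mathcal{C}_j$, when it and $v$ are both charged), so that it enlarges $\mathcal{C}_j$ to a column of size $2$; or $\mathcal{C}$ cycles and $v$ completes a singleton column of an almost-full $T$-graph to $T_{2k}$ or $T_{2k}^{(i)}$. (In particular, if $H$ were already a maximal $T$- or $C$-graph, one of these obstructions would be forced, so that possibility does not arise.) This enumeration---running over how $N$ can meet one or two columns, over cycling versus non-cycling, and over the permissible weights and charge at $v$---is where essentially all the work lies, and is the step I expect to be the real obstacle.

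With the position of $v$ thus determined, $G$ inherits a profile extending $\mathcal{C}$: its columns have size at most $2$, its rank is at least $5$, its charges lie in the two extreme columns (Corollary~\ref{cor:chargedends}), its norm-$2$ edges lie within the first two or the last two columns (Corollary~\ref{cor:weight2ends}), and no four or five of its vertices induce an $X_i$. Reading this profile column by column, the admissible bipartite links, together with the admissible edge-weights, between consecutive columns---pinned down by the absence of the $X_i$---are exactly those occurring along $T_{2k}$, $T_{2k}^{(i)}$, $C_{2k}$, $C_{2k+1}$, $C_{2k}^{++}$ and $C_{2k}^{+-}$. Therefore $G$ is equivalent to a subgraph of one of these non-sporadic $\Z[i]$-graphs, as required.
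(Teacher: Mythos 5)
There is a genuine gap, and you have named it yourself: the entire content of the proposition lives in the step you defer as ``the real obstacle,'' namely the enumeration of how the deleted vertex $v$ can reattach to the profile of $H = G - v$. Asserting the trichotomy (new end column / enlarging an interior singleton / completing a cycling profile) is not a proof of it, and with your set-up it is genuinely hard to establish: after deleting a single arbitrary non-cut vertex you have no cyclotomic supergraph of $H$ that contains $v$ from which to read off $v$'s position, so every constraint on $N_G(v)$ must be extracted by hand from the forbidden configurations, simultaneously across all columns of the profile, including the interior ones.

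The paper's proof is organised precisely to avoid this. It takes a chordless path (or cycle) $P$ with the maximal number of vertices and deletes its two endvertices $x$ and $y$ (or two adjacent vertices of the cycle). After checking connectivity, this yields three connected cyclotomic graphs $G\setminus\{x\}$, $G\setminus\{y\}$ and $G\setminus\{x,y\}$ on at least $9$, $9$ and $8$ vertices, to which Lemmas~\ref{lem:profr5} and~\ref{lem:profn8} apply; the uniqueness of their profiles, together with Corollary~\ref{cor:classi} to switch the two $(n-1)$-vertex graphs compatibly, forces $x$ and $y$ to the ends of the profile of $G\setminus\{x,y\}$ with essentially no case analysis, because $x$ already occupies a legitimate position inside the cyclotomic graph $G\setminus\{y\}$ and vice versa. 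The only thing left to check by hand is the interaction between the two extreme columns (whether $\mathcal C_0$ and $\mathcal C_{q-1}$ are adjacent, which is the cycling case), and that is exactly where the graphs of Figures~\ref{fig:nonnonsupersporads} and~\ref{fig:Ygraphs} and Corollaries~\ref{cor:notcycling1} and~\ref{cor:notcycling2} are deployed. To salvage your single-deletion plan you would at minimum need to (i) rule out, or fully classify, attachments of $v$ to interior columns, (ii) verify the edge-weight pattern between $v$ and its neighbouring columns, and (iii) treat the cycling case separately --- that is, reconstruct most of the paper's argument. As written, the proposal is an outline of a strategy rather than a proof.
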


	\begin{proof}
		Let $G$ satisfy the assumption of the proposition.
		Take a chordless path or cycle $P$ of $G$ with maximal number of vertices.
		Let $x$ and $y$ be the endvertices of $P$ if $P$ is a path, otherwise if $P$ is a cycle, let $x$ and $y$ be any two adjacent vertices of $P$.
		If there simultaneously exist chordless paths and chordless cycles in $G$ both containing the maximal number of vertices then we take $P$ to be one of the paths. 
		\begin{claim}
			\label{clm:connected}
			The subgraphs $G \backslash \left \{x\right \}$, $G \backslash \left \{y\right \}$, and $G \backslash \left \{x,y\right \}$ are connected.
		\end{claim}
		If a vertex $x^\prime$ of $G$ is adjacent to $x$, then it must also be adjacent to another vertex of $P$ otherwise, if $P$ is a path, there exists a longer chordless path or, if $P$ is a cycle, there exists a chordless path with the same number of vertices of $P$.
		It follows that $G \backslash \left \{x\right \}$ is connected.
		This is similar for $G \backslash \left \{y\right \}$.

		Now suppose $P$ is a chordless cycle and there exists some vertex $z$ not on $P$ that is adjacent to both $x$ and $y$ and to no other vertex of $P$.
		Since the graph $Y_1$ cannot be equivalent to any subgraph of $G$, the triangle $xyz$ must have at least two charges, and hence at least one of $x$ or $y$ is charged.
		Therefore $P$ is a charged chordless cycle of length at least $5$.
		But, by Corollary~\ref{cor:notcycling1}, the longest charged chordless cycle of $G$ has length $4$, which gives a contradiction.
		Therefore, $G \backslash \left \{x,y\right \}$ is connected and we have proved Claim~\ref{clm:connected}.

		Next we show that $G$ has a profile.
		\begin{claim}
			\label{clm:profile}
			$G$ has a profile.
		\end{claim}
		Since $G$ is non-sporadic, the connected subgraphs $G \backslash \left \{x\right \}$ and $G \backslash \left \{y\right \}$ are cyclotomic.
		Moreover, since they have at least $9$ vertices, they must have path rank at least $5$, and hence, by Lemma~\ref{lem:profr5}, they have uniquely determined profiles.
		The connected subgraph $G\backslash \left \{x,y\right \}$ has at least $8$ vertices and is a proper subgraph of the cyclotomic graph $G \backslash \left \{x\right \}$.
		Thus, by Lemma~\ref{lem:profn8}, $G\backslash \left \{x,y\right \}$ has a uniquely determined profile.

		By Corollary~\ref{cor:classi}, we can switch $G \backslash \left \{x\right \}$ to obtain a subgraph $G_x$ of one of $T_{2k}$, $T^{(i)}_{2k}$, $C_{2k}$, $\pm C_{2k+1}$, $\pm C_{2k}^{++}$, and $C_{2k}^{+-}$ for some $k$.
		We can simultaneously switch $G \backslash \left \{y\right \}$ to obtain a subgraph $G_y$ so that $G_x \backslash \left \{y\right \}$ and $G_y \backslash \left \{x\right \}$ are the same subgraph which we will call $G_{xy}$.

		Let $\mathcal C^\prime$ be the profile of $G_{xy}$.
		Since it is uniquely determined, the profile of $G_x$ (respectively $G_y$) can be obtained by the addition of $x$ (respectively $y$) to the profile $\mathcal C^\prime$ of $G_{xy}$.
		In particular, to obtain the profile of $G_x$ (respectively $G_y$), either the vertex $x$ (respectively $y$) is given its own column at one end of $\mathcal C^\prime$ or it is added to the first or last column of $\mathcal C^\prime$.
		Now merge the profiles of $G_x$ and $G_y$ to give a sequence of columns $\mathcal C$.
		We will show that $\mathcal C$ is a profile for $G$.

		Let $q \geqslant 5$ be the number of columns of $\mathcal C$ and denote by $\mathcal C_j$ the $j$th column of $\mathcal C$, where $\mathcal C_{q-1}$ is the column containing $x$ and $\mathcal C_0$ is the column containing $y$.
		Suppose that no vertex of column $\mathcal C_0$ is adjacent to any vertex of column $\mathcal C_{q-1}$.
		Since no subgraph of $G$ can be equivalent to $X_1$, $X_2$, $X_3$, $X_4$, $X_5$, $Y_2$, $Y_3$, $Y_9$, or $Y_{10}$, we have that $G$ is equivalent to a subgraph of a non-sporadic graph, and moreover $\mathcal C$ is a profile for $G$. 

		Otherwise, suppose that some vertex $u$ of $\mathcal C_{q-1}$ is adjacent to some vertex $v$ of $\mathcal C_0$ with $w(u,v) = s$ for some $s \in \Z[i]$.
		By Corollary~\ref{cor:notcycling1}, since $G$ has a chordless cycle of length at least $5$ it must be uncharged and moreover, by Corollary~\ref{cor:notcycling2} each edge-weight of $G$ must be in the group of units of $\Z[i]$.
		Furthermore, any other vertex in $\mathcal C_{q-1}$ must be adjacent to some vertex in $\mathcal C_0$ otherwise it would contain a subgraph equivalent to $Y_6$, $Y_7$, or $Y_8$, which is impossible.
		Hence $\mathcal C$ is a profile for $G$ as required for Claim~\ref{clm:profile}.

		It remains to demonstrate that, when $\mathcal C$ is cycling, $G$ is equivalent to a non-sporadic graph.
		In particular, we will show that $G$ is equivalent to a subgraph of $T_{2k}$, $T^{(i)}_{2k}$, or $C_{2k}$.
		By above, we have that $G$ is uncharged and, since no subgraph of $G$ can be equivalent to $Y_1$, we have that $G$ is triangle-free.
		And for $j \in \left \{ 1,\dots,q-1 \right \}$ we have, for any vertex $a$ in $\mathcal C_j$ and vertices $b$ and $b^\prime$ in $\mathcal C_{j+1}$, the equality $w(a,b) = w(a,b^\prime)$, and for any vertex $b$ in $\mathcal C_{j+1}$ and vertices $a$ and $a^\prime$ in $\mathcal C_j$, the equality $w(a,b) = -w(a^\prime,b)$.
		Each of these edge-weights is $\pm 1$.

		Suppose there exists another vertex $u^\prime$ in $\mathcal C_{q-1}$ that is adjacent to $v$.
		Let $z$ be a vertex in the column $\mathcal C_{q-2}$, which has $w(z,u) = w(z,u^\prime)$.
		Thus, in order for $G$ to avoid containing a subgraph equivalent to either $Y_4$ or $Y_5$, we must have that $w(u^\prime,v) = -s$.
	    Similarly, if there exists another vertex $v^\prime$ in $\mathcal C_0$ which is adjacent to $u$, then $w(u,v^\prime) = s$.
		And hence, if there exist vertices $u^\prime$ and $v^\prime$ different from $u$ and $v$ with $u^\prime$ in $\mathcal C_{q-1}$ and $v^\prime$ in $\mathcal C_0$, then  $w(u^\prime,v) = -s$ and $w(u,v^\prime) = s$.
		Since $z$ in the column $\mathcal C_{q-2}$ has $w(z,u) = w(z,u^\prime)$, in order for $G$ to avoid containing a subgraph equivalent to either $Y_4$ or $Y_5$, we must have that $w(u^\prime,v^\prime) = -s$.
		Therefore, $G$ is equivalent to a subgraph of $T_{2k}$, $T^{(i)}_{2k}$, or $C_{2k}$ for some $k$.	
	\end{proof}

	\section{Computations and finite search}
	
	In this section we give details of the finite search.
	Let $R$ denote either the Gaussian integers or the Eisenstein integers.
	For $k \in \N$, define the sets
	\[
		\mathcal L_k = \{ x \in R : x\bar x = k \}.
	\]
	Let $G$ be an $R$-graph.
	Define the \textbf{degree} $d_v$ of a vertex $v$ as
	\[
		d_v = \sum_{u\in V(G)}\abs{w(u,v)}^2. 
	\]
	
	\subsection{Hermitian $R$-matrices with large norm entries}
	\begin{proposition} 
		Let $A$ be a Hermitian $R$-matrix and define $w_{max}=\displaystyle\max_{u\neq v}{A_{uv}A_{vu}}$ and $x_{max}=\max{|A_{uu}|}$. If $w_{max}\geqslant3$ or $x_{max}\geqslant2$, then $M(A)\geqslant 1.556\ldots$.
	\end{proposition}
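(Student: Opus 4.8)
The plan is to deduce the proposition from Lemma~\ref{lem:entryGreaterThan2} and Lemma~\ref{lem:irratMustBeRoot2} in all but one case, and to settle that remaining case by combining the classification of cyclotomic $\Z[\omega]$-matrices (Lemma~\ref{lem:cycclass}) with a finite computation. Here $M(A)$ abbreviates $M(R_A)$, and I assume throughout that $A$ is non-cyclotomic; since the statement is used in the search for minimal non-cyclotomic matrices, it suffices in fact to take $A$ minimal non-cyclotomic, and hence connected.

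First I would dispose of every case in which $A$ has an entry of modulus at least $2$: this covers $x_{max}\geqslant 2$ (a diagonal entry has modulus at least $2$) and also $w_{max}\geqslant 4$ (an off-diagonal entry has squared modulus at least $4$). If the largest modulus of an entry of $A$ exceeds $2$, then Lemma~\ref{lem:entryGreaterThan2} gives $M(R_A)\geqslant(\sqrt{5}+1)/2$; if it equals $2$, then, $A$ being non-cyclotomic, Lemma~\ref{lem:irratMustBeRoot2} gives the same bound. As $(\sqrt{5}+1)/2=1.618\ldots>1.556\ldots$, this case is complete. In $\Z[i]$ the nonzero elements have modulus in $\{1,\sqrt{2},2,\sqrt{5},\dots\}$, so $w_{max}\geqslant 3$ already produces an entry of modulus at least $2$; thus for $R=\Z[i]$ the proposition is proved. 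For $R=\Z[\omega]$ the only remaining case is $x_{max}\leqslant 1$ together with $w_{max}=3$, that is, every entry of $A$ has modulus at most $\sqrt{3}$ and at least one off-diagonal entry has norm $3$, so is an associate of $1+\omega$.

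To handle this Eisenstein norm-$3$ case I would invoke Lemma~\ref{lem:cycclass}. Inspecting the maximal connected cyclotomic $\Z[\omega]$-graphs, and recalling from Table~\ref{tab:zwedge} that an edge of norm $3$ is the one drawn with three arrowheads, the only such graphs carrying an edge of norm $3$ are $S_2^{\dag}$ and $S_4^{\ddag}$ of Figure~\ref{fig:maxcycs8}, on $2$ and $4$ vertices respectively. Since equivalence preserves the norms of edge-weights, every connected cyclotomic $\Z[\omega]$-matrix with an edge of norm $3$ therefore has at most $4$ vertices. Now let $uv$ be an edge of $A$ of norm $3$. If $A$ has at least three vertices, then, $uv$ being an edge, some spanning tree of $A$ containing $uv$ has a leaf $w\notin\{u,v\}$, and $A\setminus\{w\}$ is connected; by minimality it is cyclotomic, and it still contains the norm-$3$ edge $uv$, so it has at most $4$ vertices and $A$ has at most $5$. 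Only finitely many matrices $A$ remain, and I would finish by computing $M(R_A)$ for each of them. The constant is best possible: it is attained by the $3$-vertex graph built from an edge of norm $3$, an edge of norm $1$ and one charge $\pm 1$, whose characteristic polynomial is $x^{3}-x^{2}-4x+3$ and which has exactly one eigenvalue $\rho_0$ outside $[-2,2]$, namely the largest root of that polynomial; then $M(R_A)=(\rho_0+\sqrt{\rho_0^{2}-4})/2=1.556\ldots$.

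The first two steps are routine, quoting results already proved. The substance is the Eisenstein norm-$3$ case: the classification is what makes the family of graphs to be examined finite, and the remaining obstacle is the finite verification that none of these has Mahler measure in the open interval $(1,1.556\ldots)$. The point to be careful about is that a minimal non-cyclotomic matrix can have two eigenvalues outside $[-2,2]$; indeed, by interlacing (Theorem~\ref{thm:interlacing}) against a cyclotomic submatrix obtained by deleting one vertex, all of its eigenvalues other than the least and the greatest lie in $[-2,2]$. Consequently a graph whose spectral radius is only slightly less than $\rho_0$ is not excluded by interlacing alone, and one must check that in every such case the product of the two factors $(\lambda+\sqrt{\lambda^{2}-4})/2$ arising from the outer eigenvalues is still at least $1.556\ldots$.
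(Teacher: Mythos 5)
Your argument is correct in outline but follows a genuinely different route from the paper's. The paper disposes of the proposition in one line by citing the case-by-case bounds of Section 6.2 of the second author's thesis ($x_{max}\geqslant 3$, $x_{max}=2$, $w_{max}\geqslant 5$, $w_{max}=4$, $w_{max}=3$, with constants $2.618\ldots$, $1.722\ldots$, $2.369\ldots$, $2.081\ldots$, $1.556\ldots$). You instead re-derive the result from material internal to the paper: Lemmas~\ref{lem:entryGreaterThan2} and~\ref{lem:irratMustBeRoot2} kill every case in which some entry has modulus at least $2$ (which covers $x_{max}\geqslant 2$, $w_{max}\geqslant 4$, and all of $\Z[i]$ when $w_{max}\geqslant 3$, since $\Z[i]$ has no element of norm $3$), at the cost of the weaker but still sufficient constant $(\sqrt{5}+1)/2=1.618\ldots$ in place of $1.722\ldots$ when $x_{max}=2$. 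The genuinely new case, an Eisenstein edge of norm $3$, you make finite via Lemma~\ref{lem:cycclass}: among the maximal connected cyclotomic $\Z[\omega]$-graphs only $S_2^\dag$ and $S_4^\ddag$ of Figure~\ref{fig:maxcycs8} carry a norm-$3$ edge, equivalence preserves edge norms, and your spanning-tree/leaf-deletion argument correctly caps a minimal non-cyclotomic example at $5$ vertices. Your extremal $3$-vertex graph is right: its characteristic polynomial is indeed $x^3-x^2-4x+3$, with a single eigenvalue $\rho_0=2.198\ldots$ outside $[-2,2]$ and Mahler measure $1.5557\ldots$, matching the paper's constant. What the citation buys the paper is a fully verified computation; what your route buys is self-containedness and an explanation of where $1.556\ldots$ actually comes from.

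Two caveats. First, as you observe, the statement is literally false without a non-cyclotomicity hypothesis (the cyclotomic graph $S_2$ has $w_{max}=4$), so it must be read, as in its application, for minimal non-cyclotomic $A$; your reduction to that case is the right reading, but note that your leaf-deletion step genuinely uses minimality---a non-minimal non-cyclotomic matrix could have its norm-$3$ edge far from any minimal non-cyclotomic core, so your proof covers exactly the minimal case and no more. Second, the finite verification over all $\Z[\omega]$-graphs on at most $5$ vertices containing a norm-$3$ edge is asserted rather than performed; it is a well-defined finite computation, and your closing remark about graphs with two eigenvalues outside $[-2,2]$ is precisely the right pitfall to guard against, but until it is executed the Eisenstein case rests on the same kind of unexhibited computation that the paper delegates to the thesis.
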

	\begin{proof} By Section 6.2 of \cite{GTay:thesis10} we have $x_{max}\geqslant3\Rightarrow M(A)>2.618\ldots$; else if $w_{max}\geqslant5$ then $M(A)>2.369\ldots$, or if $x_{max}=2$ then $M(A)>1.722\ldots$. Otherwise either $w_{max}=4$ and $M(A)>2.081\ldots$ or $w_{max}=3$ and $M(A)>1.556\ldots$.
	\end{proof}

	Thus if $A$ is a minimal non-cyclotomic $R$-matrix with Mahler measure less than $1.3$, then $A$ has diagonal entries of absolute value at most $1$ and off-diagonal entries of norm at most $2$; thus $A$ can be represented by a (charged) $R$-graph with edges of weight at most $2$ and vertices of charge $0,1$ or $-1$ only.
	
	\subsection{Growing Algorithms}

	Given a maximal cyclotomic graph $G$ and an induced subgraph $G^\prime$, we may recover $G$ from $G^\prime$ by reintroducing the `missing' vertices one at a time, giving a sequence of cyclotomic supergraphs of $G^\prime$ contained in $G$. We may thus recover \emph{all} cyclotomic supergraphs of $G^\prime$ by considering all possible additions of a new vertex to $G$. If an addition yields a connected cyclotomic graph we describe it as a \emph{cyclotomic addition}, otherwise as a \emph{non-cyclotomic addition}: maximal graphs are therefore those which admit no cyclotomic addition.

	For an $n\times n$ matrix representative $A$ of $G$, the addition of an extra vertex is specified by a nonzero column vector $\mathbf{c} \in R^n$ and a charge $x$ from charge set $X=\{0,1,-1\}$, giving a supermatrix
\[\begin{pmatrix}
A_{11} & \cdots & A_{1n} & c_1\\
\vdots & \ddots &\vdots & \vdots\\
A_{n1} & \cdots & A_{nn} & c_n\\
\overline{c_1} & \cdots & \overline{c_n} & x
\end{pmatrix}\]

	Define $C_n(R)$ to be the collection of nonzero vectors from $R^n$.
	We define an equivalence relation on column vectors whereby $\mathbf{c}\sim \mathbf{c}^\prime$ if $c_j=\mu c^\prime_j$ for all $j$, where $\mu \in R$ is a unit. For $\mathbf{c}\sim \mathbf{c}^\prime$, the supermatrix formed from $A$, $\mathbf{c}$, and $x$ is equivalent to the one formed from $A$, $\mathbf{c}^\prime$, and $x$ by $\mu$-switching at the extra vertex, so we restrict our attention to reduced column sets $\mathfrak C_n(R)=C_n(R)/\sim$. Further, we may sometimes restrict our attention to bounded column sets $\mathfrak C_n^b(R)=\{ \mathbf{c} \in \mathfrak C_n(R)\,|\, \sum_1^n \mbox{Norm}(c_i) \leqslant b\}$.
	
	Starting with the set $\Sigma_2$ of $2$-vertex cyclotomic graphs we may therefore iteratively recover supersets $\Sigma_j$ and $T_j$ of cyclotomic and minimal non-cyclotomic supergraphs by rounds of growing followed by reduction modulo equivalence of the $\Sigma_j$. In practice such reduction becomes computationally infeasible very rapidly, so to combat combinatorial explosion we seek to fix features (such as specific subgraphs), grow representatives of all graphs exhibiting those features, then exclude those features in subsequent growings to prevent duplication. 

	\subsection{Graphs with at most ten vertices}

	\subsubsection{$\Z[i]$-graphs with a weight-$2$ edge and at most ten vertices}

	\begin{proposition}\label{nochargeandw2} Let $G$ be a minimal non-cyclotomic $\Z[i]$-graph with a weight-$2$ edge incident at a charged vertex. Then $G$ has Mahler measure at least $1.401\dots$.
	\end{proposition}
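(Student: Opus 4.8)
The plan is to combine the entry-size reductions already established with the growing-algorithm framework to turn the statement into a finite computation. First I would invoke the preceding propositions: by Lemma~\ref{lem:entryGreaterThan2} and Lemma~\ref{lem:irratMustBeRoot2} we may assume every off-diagonal entry of $G$ has norm at most $2$ and, since $G$ is non-cyclotomic with a weight-$2$ edge, no further restriction is needed there; by the proposition of Section~6.2 (or rather its consequence recorded just before this subsection) a minimal non-cyclotomic graph with Mahler measure below $1.3$ has all charges in $\{0,1,-1\}$ and all edge-weights of norm at most $2$. So it suffices to show that a minimal non-cyclotomic $\Z[i]$-graph $G$ carrying a weight-$2$ edge incident at a charged vertex, all of whose charges lie in $\{0,\pm1\}$ and all of whose edges have norm at most $2$, and which has Mahler measure less than $1.401\dots$, does not exist.

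Next I would bound the number of vertices of $G$. Fix the weight-$2$ edge $uv$ with $u$ charged. The $2\times 2$ principal submatrix on $\{u,v\}$ already has $u$ charged and edge-weight $2$; its square has $(u,u)$-entry at least $5$ (charge contributes $\geqslant 1$, edge contributes $4$), so by interlacing $\rho(G)^2\geqslant 5$. Wait — that would make $M(R_G)\geqslant(\sqrt5+1)/2>\tau_0$ immediately, but the claimed bound $1.401\dots$ is smaller than $(\sqrt5+1)/2=1.618\dots$, so the content must be a finite-search statement establishing a genuinely smaller \emph{minimal} Mahler measure among such graphs. Accordingly, I would instead argue that $G$ is \emph{supersporadic}: a weight-$2$ edge at a charged vertex means $X_8$ of Figure~\ref{fig:nonnonsupersporads} (and $X_1$, $X_5$ once one appends a neighbour) obstruct $G$ from being a subgraph of any non-supersporadic graph on $\geqslant 5$ vertices, so by Proposition~\ref{pro:unchargedRed} and the discussion following it, $G$ is either supersporadic or a non-supersporadic graph on at most $10$ vertices. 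Since $X_8$ is not a subgraph of any non-supersporadic graph at all with $\geqslant 5$ vertices, $G$ on $5$ or more vertices is supersporadic, and a supersporadic minimal non-cyclotomic graph has all proper connected subgraphs inside (equivalent to subgraphs of) the finitely many sporadics $S_1,\dots,S_{16}$.

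The third step is the computation itself, carried out via the growing algorithm of the previous subsection. Starting from the $2$-vertex cyclotomic graph $S_2$ (the weight-$2$ edge) together with a charge $x\in\{1,-1\}$ at one endpoint — which is already non-cyclotomic, so one instead starts from a charged weight-$2$ edge's cyclotomic subgraphs and grows — I would enumerate all ways of attaching vertices, using the reduced bounded column sets $\mathfrak C_n^b(\Z[i])$ with norm bound forced by the entry restrictions, keeping only additions that remain within the union of the sporadics $\cup\,S_j$ as subgraphs, and at each stage recording the minimal non-cyclotomic graphs that appear. For each such minimal non-cyclotomic $G$ one computes $M(R_G)=M(\chi_G(z+1/z))$ directly and checks $M(R_G)\geqslant 1.401\dots$; the extremal case presumably achieves exactly $1.401\dots$, pinning the constant. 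For graphs on $3$ and $4$ vertices the supersporadic reduction does not apply, so these small cases (a charged weight-$2$ edge with one or two extra vertices, all entries of norm $\leqslant 2$, charges in $\{0,\pm1\}$) must be checked by brute force as well.

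The main obstacle is controlling the combinatorial explosion in the growing step: even with the norm bound and the column-set reduction modulo unit-switching, the number of candidate extensions of an $n$-vertex graph over $\Z[i]$ grows quickly, so the practical content is the "fix a subgraph, grow, then forbid that subgraph" bookkeeping described above, organised around the forbidden configurations $X_1,\dots,X_8$ of Figure~\ref{fig:nonnonsupersporads} and $Y_1,\dots,Y_{10}$ of Figure~\ref{fig:Ygraphs} to keep each stage's output small enough to reduce modulo equivalence. I do not expect any conceptual difficulty beyond this; the interlacing bound handles every graph large enough to contain a genuine obstruction, and what remains is a bounded verification whose only real cost is implementation care.
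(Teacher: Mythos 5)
Your overall strategy---reduce to a short list of two-vertex seed configurations and run the growing algorithm to obtain a finite, checkable list of minimal non-cyclotomic graphs---is the same as the paper's. The paper's proof simply observes that, up to equivalence, $G$ induces one of three seeds (charges $+1,+1$; $+1,-1$; or $+1$, uncharged, joined by a weight-$2$ edge); the first is already non-cyclotomic with Mahler measure $1.883\dots$ and hence is all of $G$, while growing the other two with column sets $\mathfrak C_n(\mathcal L_1\cup\mathcal L_2\cup\{0\})$ terminates already at $n=4$, because those seeds admit no $5$-vertex cyclotomic or minimal non-cyclotomic supergraphs; the $3$- and $4$-vertex minimal non-cyclotomic graphs recovered have Mahler measure at least $1.401\dots$ and $1.847\dots$ respectively.

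There is, however, a concrete error in your setup: you read ``weight-$2$ edge'' as an edge of \emph{modulus} $2$ (edge-weight $2$), whereas in this paper the weight of an edge is the \emph{norm} of its edge-weight, so a weight-$2$ edge has edge-weight $1+i$ up to units (the two-arrowhead edge of Table~\ref{tab:ziedge}; note that the column sets used for precisely this search draw entries from $\mathcal L_2=\{x:x\bar x=2\}$). This misreading corrupts two steps. First, your interlacing computation ``$(A^2)_{uu}\geqslant 1+4=5$'' is false for the actual configuration: the correct value is $1+\abs{1+i}^2=3$, giving only $\rho\geqslant\sqrt3<2$ and hence no bound at all---that, and not a smaller true minimum, is why the naive interlacing argument yields nothing and a genuine search is required. (If the edge really had modulus $2$, Lemma~\ref{lem:irratMustBeRoot2} would already give $M(R_A)\geqslant(\sqrt5+1)/2$ and the proposition would be redundant.) Second, your proposed seed is wrong: $S_2$ is the modulus-$2$ edge between two uncharged vertices, not the norm-$2$ edge, and in any case the seed here must carry a charge at one endpoint. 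Finally, the detour through supersporadicity and Proposition~\ref{pro:unchargedRed} is unnecessary and leaves the actual extent of the search unjustified in your write-up; the finiteness and smallness of the computation come from the (computationally verified) fact that the two cyclotomic seeds have no $5$-vertex cyclotomic supergraphs, so the growing stops at four vertices.
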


	\begin{proof} Up to equivalence, $G$ induces as subgraph one of the graphs

	\[H_1:=
	\begin{tikzpicture}[scale=1.3, auto]
		\begin{scope}	
			\foreach \pos/\name/\type/\charge in {{(0,0)/a/pc/1}, {(1,0)/b/pc/1}}
				\node[\type] (\name) at \pos {$\charge$}; 
			\foreach \edgetype/\source/ \dest / \weight in {wwedge/a/b/{}}
			\path[\edgetype] (\source) -- node[weight] {$\weight$} (\dest);
		\end{scope}
		\end{tikzpicture}\hspace{2em}
	H_2:=
	\begin{tikzpicture}[scale=1.3, auto]
		\begin{scope}	
			\foreach \pos/\name/\type/\charge in {{(0,0)/a/pc/1}, {(1,0)/b/nc/1}}
				\node[\type] (\name) at \pos {$\charge$}; 
			\foreach \edgetype/\source/ \dest / \weight in {wwedge/a/b/{}}
			\path[\edgetype] (\source) -- node[weight] {$\weight$} (\dest);
		\end{scope}
		\end{tikzpicture}\hspace{2em}
	H_3:=
	\begin{tikzpicture}[scale=1.3, auto]
		\begin{scope}	
			\foreach \pos/\name/\type/\charge in {{(0,0)/a/pc/1}, {(1,0)/b/zc/{}}}
				\node[\type] (\name) at \pos {$\charge$}; 
			\foreach \edgetype/\source/ \dest / \weight in {wwedge/a/b/{}}
			\path[\edgetype] (\source) -- node[weight] {$\weight$} (\dest);
		\end{scope}
		\end{tikzpicture}\hspace{2em}
	\]

	The graph $H_1$ is non-cyclotomic (with Mahler measure $1.883\ldots$) and therefore minimal. Growing the seed graphs $H_2,H_3$ with column sets $\mathfrak C_n(\mathcal{L}_1\cup\mathcal{L}_2\cup\{0\})$, charge set $X=\{0,1,-1\}$ terminates at $n=4$ since they have neither cyclotomic nor minimally non-cyclotomic $5$-vertex supergraphs. The $3$- and $4$-vertex minimal non-cyclotomic supergraphs recovered during this process have Mahler measure at least $1.401\ldots$ and $1.847\ldots$ respectively. 
 \end{proof}
 
 	Thus if $uv$ is an edge of weight-$2$ in a minimal non-cyclotomic graph $G$, we may assume that vertices $u$ and $v$ are uncharged. If $G$ is assumed to have an edge of weight-$2$, then (up to equivalence) it can be grown from the seed graph 
 
	\[H_4:=
	\begin{tikzpicture}[scale=1.3, auto]
		\begin{scope}	
			\foreach \pos/\name/\type/\charge in {{(0,0)/a/zc/{}}, {(1,0)/b/zc/{}}}
				\node[\type] (\name) at \pos {$\charge$}; 
			\foreach \edgetype/\source/ \dest / \weight in {wwedge/a/b/{}}
			\path[\edgetype] (\source) -- node[weight] {$\weight$} (\dest);
		\end{scope}
		\end{tikzpicture};
	\]

	Further, in each growing round we may restrict to columns and charges $\mathbf{c}$ and $x$ such that $x=0$ and $\mathbf{c} \in \mathfrak C_n(\mathcal{L}_1\cup\mathcal{L}_2\cup\{0\})$ or $x=\pm1$ and $\mathbf{c}\in \mathfrak C_n(\mathcal{L}_1\cup\{0\})$.

	From $\Sigma_2=\{H_4\}$ we generate the sets $\Sigma_3,\Sigma_4,\Sigma_5,\Sigma_6$ and $T_3,T_4,T_5,T_6$ in this way, reducing by $U_n(\Z[i])$ for $n=3,4$ and partially reducing by $U_n(\Z)$ for $n=5,6$. 

	For subsequent rounds, we note the following:

	\begin{proposition}\label{wdegmax4} Let $G$ be a minimal non-cyclotomic $R$-graph with all edges of weight at most $2$ and at least $7$ vertices. Then each vertex of $G$ has degree at most $4$.
	\end{proposition}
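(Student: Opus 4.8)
The plan is to deduce the bound from a single spectral fact: \emph{every vertex of a cyclotomic Hermitian matrix has degree at most $4$}. If $B$ is cyclotomic its eigenvalues lie in $[-2,2]$, so the eigenvalues of $B^2$ lie in $[0,4]$; since the degree $d_v$ is precisely the $(v,v)$-entry of $B^2$, the same interlacing argument used in the proof of Lemma~\ref{lem:entryGreaterThan2} (comparing $B^2$ with its $1\times 1$ principal submatrix $(d_v)$, using Theorem~\ref{thm:interlacing}) gives $d_v \leqslant \lambda_{\max}(B^2) \leqslant 4$. So the whole content of the proposition is to transfer this bound from cyclotomic graphs to a minimal non-cyclotomic graph, which is \emph{not} cyclotomic but all of whose proper induced subgraphs are.

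Concretely, I would let $G$ be minimal non-cyclotomic with $n \geqslant 7$ vertices, fix a vertex $v$, and suppose for contradiction that $d_v \geqslant 5$. Since $G$ is minimal non-cyclotomic, deleting any single vertex leaves a proper induced subgraph, hence a cyclotomic graph. If some vertex $w \neq v$ is not adjacent to $v$, then $w$ contributes nothing to the degree sum defining $d_v$, so $v$ still has degree $d_v \geqslant 5$ in the cyclotomic graph $G\setminus\{w\}$, contradicting the fact above. If instead $v$ is adjacent to all of the other $n-1 \geqslant 6$ vertices, choose any $w \neq v$; in $G\setminus\{w\}$ the vertex $v$ retains its other $n-2 \geqslant 5$ neighbours, each contributing at least $1$ to its degree, so again $v$ has degree $\geqslant 5$ in a cyclotomic graph. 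Either way we reach a contradiction, so $d_v \leqslant 4$.

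The argument is short, and the only step needing any care is the second case, where the closed neighbourhood of $v$ exhausts $V(G)$ so that one cannot simply delete a non-neighbour; this is exactly the place where the hypothesis $n \geqslant 7$ is used, as it guarantees that after removing a single vertex the vertex $v$ still has at least five neighbours. (The standing assumption in this subsection that all edge-weights have norm at most $2$ is not actually invoked in this particular argument.)
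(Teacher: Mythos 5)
Your proof is correct, and it reaches the conclusion by a genuinely different route in its two key steps, even though the overall skeleton is the same as the paper's (use minimality to produce a proper induced subgraph, which is therefore cyclotomic, in which some vertex still has degree at least $5$). For producing the proper subgraph, the paper shrinks \emph{down}: it takes $v$ together with at most $5$ of its neighbours, giving an induced subgraph on at most $6$ vertices, which is proper precisely because $n \geqslant 7$. You instead delete a single vertex and verify that $v$ retains degree at least $5$ in what remains, with the hypothesis $n \geqslant 7$ entering in your second case; both uses are sound. For the contradiction, the paper appeals to the classification (Lemma~\ref{lem:cycclass}): by inspection, no connected cyclotomic graph in the list has a vertex of degree exceeding $4$. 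You instead prove the degree bound for cyclotomic matrices directly, via $d_v = (B^2)_{vv} \leqslant \lambda_{\max}(B^2) \leqslant 4$ by Theorem~\ref{thm:interlacing}, exactly as in the proof of Lemma~\ref{lem:entryGreaterThan2}. Your version is more self-contained and elementary, needing only interlacing rather than the full classification; the paper's is shorter given that the classification is already in hand. Your closing remark is also accurate: neither argument uses the upper bound of $2$ on the edge-weights, only that nonzero weights have norm at least $1$.
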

	\begin{proof} Let $v$ be a vertex of $G$ with degree greater than $4$. Then we require at most $5$ neighbours of $v$ such that the induced subgraph $G^\prime$ on $v$ and those neighbours also contains a vertex of degree greater than $4$. As $G$ has at least $7$ vertices, $G^\prime$ is a proper subgraph of a minimally non-cyclotomic graph and thus is cyclotomic. But by Lemma~\ref{lem:cycclass} this is impossible.\end{proof}

	For $n\geqslant6$ we may therefore restrict to columns and charges $\mathbf{c}$ and $x$ such that $x=0$ and $\mathbf{c}\in \mathfrak C_n^4(\mathcal{L}_1\cup\mathcal{L}_2\cup\{0\})$ or $x=\pm1$ and $\mathbf{c}\in \mathfrak C_n^3(\mathcal{L}_1\cup\{0\})$ to ensure the vertex being added has degree at most $4$. Further, for each graph to be grown we may discard from the column set any $c$ which would induce a vertex of degree greater than $4$ in the supergraph. In this way, we generate the sets $\Sigma_7,\Sigma_8,\Sigma_9,\Sigma_{10}$ and $T_7,T_8,T_9,T_{10}$. Brute-force reduction of the $\Sigma_j$ is not possible, but the resulting sets $T_j$ are small enough to be reduced by hand. 

	This search takes $25$ cpudays to complete up to ten vertices; Table~\ref{searchresults1} summarises the results.

	\begin{table}[H]
	\begin{center}
	\begin{tabular}{|c|c|c|l|}
	\hline
	$j$ & $|T_j|$ & $\displaystyle\min_{A\in T_j}{M(A)}$ \\\hline
	$3$ & $5$ & $1.582\ldots$\\
	$4$ & $50$ & $1.401\ldots$\\
	$5$ & $\leqslant 23$ & $1.351\ldots$\\
	$6$ & $\leqslant 39$ & $1.401\ldots$\\
	$7$ & $1$ & $1.506\ldots$\\
	$8$ & $1$ & $1.457\ldots$\\
	$9$ & $1$ & $1.425\ldots$\\
	$10$ & $1$ & $1.401\ldots$\\\hline
	\end{tabular}
	\end{center}
	\caption{Least Mahler measures of small $\Z[i]$-graphs with at least one weight-$2$ edge.}
	\label{searchresults1}
	\end{table}

	\begin{corollary} Let $G$ be a minimal noncyclotomic $\Z[i]$-graph with at most ten vertices and a weight-$2$ edge. Then $G$ has Mahler measure greater than $\tau_0$.
	\end{corollary}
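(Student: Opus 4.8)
The plan is to combine the preceding propositions and the tabulated search data. First I would invoke the earlier structural reductions: by the proposition on matrices with large-norm entries, any minimal non-cyclotomic $\Z[i]$-graph with Mahler measure less than $1.3$ has all charges in $\{0,1,-1\}$ and all edge-weights of norm at most $2$. Since $\tau_0 < 1.3$, it suffices to assume $G$ has this form, and to show $M(G) > \tau_0$ under the hypothesis that $G$ has at most ten vertices and carries a weight-$2$ edge. By Proposition~\ref{nochargeandw2} (and the remark following it), if the weight-$2$ edge were incident at a charged vertex then $M(G) \geqslant 1.401\ldots > \tau_0$ and we are done; so I would assume the weight-$2$ edge joins two uncharged vertices, i.e.\ $G$ can be grown from the seed graph $H_4$.

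Next I would appeal to the growing algorithm and the resulting sets $T_3, \dots, T_{10}$ of minimal non-cyclotomic $\Z[i]$-graphs (with a weight-$2$ edge) on up to ten vertices, as produced by the search and summarised in Table~\ref{searchresults1}. The key observation is that $G$, being minimal non-cyclotomic with a weight-$2$ edge and at most ten vertices, must be equivalent to some member of $T_j$ for $3 \leqslant j \leqslant 10$ — this is exactly what the iterative growing from $\Sigma_2 = \{H_4\}$, together with Proposition~\ref{wdegmax4} controlling the column sets for $n \geqslant 7$, guarantees. Then I would simply read off the minimum from the table: $\min_j \min_{A \in T_j} M(A) = 1.351\ldots$, attained on $T_5$, and all other rows give values $\geqslant 1.401\ldots$. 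In every case the value exceeds Lehmer's number $\tau_0 = 1.17628\ldots$, so $M(R_G) = M(\chi_G(z+1/z)) > \tau_0$ as claimed. (One should note that $M(R_A)$ and the quantity $M(A)$ recorded in the table coincide for these purposes, since $R_A$ is the reciprocal polynomial associated to $\chi_A$ and the search records precisely the Mahler measures of the $R_A$.)

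The main obstacle is not in the logical structure — which is a short deduction — but in justifying the completeness of the finite search that underlies Table~\ref{searchresults1}: namely that the growing procedure starting from $H_4$, with the stated restrictions on column sets $\mathfrak C_n(\mathcal{L}_1 \cup \mathcal{L}_2 \cup \{0\})$ for uncharged additions and $\mathfrak C_n(\mathcal{L}_1 \cup \{0\})$ for charged ones, and with the degree bound of Proposition~\ref{wdegmax4} applied for $n \geqslant 6$, really does capture (up to equivalence) every minimal non-cyclotomic $\Z[i]$-graph with a weight-$2$ edge on at most ten vertices. This rests on the classification of cyclotomic matrices (Lemma~\ref{lem:cycclass}) ensuring that the growing terminates correctly, on the reduction of column vectors modulo units so that $\mathfrak C_n(R) = C_n(R)/\!\sim$ loses nothing, and on the fact that each $T_j$ that cannot be fully reduced by brute force is nonetheless small enough to reduce by hand. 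Granting the search, the corollary is immediate; so the write-up should make clear that the content has already been discharged in the construction of Table~\ref{searchresults1}, and the corollary merely records its consequence for Lehmer's number.
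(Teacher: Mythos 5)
Your proposal is correct and follows essentially the same two-case argument as the paper: dispose of the case where the weight-$2$ edge meets a charged vertex via Proposition~\ref{nochargeandw2}, and otherwise grow from $H_4$ and read off the minimum $1.351\ldots > \tau_0$ from Table~\ref{searchresults1}. Your additional remarks on the completeness of the underlying search are a fair elaboration but do not change the route.
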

	
	\begin{proof} If $G$ has a weight-$2$ edge incident at a charged vertex, then it has Mahler measure at least $1.401\ldots$ by Proposition \ref{nochargeandw2}. Otherwise $G$ can be grown from  a graph equivalent to $H_4$, so there is a representative of $G$ in one of the $T_j$ computed above. But then $G$ has Mahler measure at least $1.351$.\end{proof}

	We may thus restrict to graphs with all edges of weight $1$.
	
	\subsubsection{Graphs containing triangles}

	\begin{proposition} Let $G$ be a minimal noncyclotomic graph inducing as subgraph a triangle with all vertices charged. Then $G$ has Mahler measure greater than $\tau_0$.
	\end{proposition}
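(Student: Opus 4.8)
The plan is to prove that $G$ has at most five vertices and then to finish by a routine finite computation. By the reductions established above we may assume that every charge of $G$ lies in $\{0,1,-1\}$ and that every edge-weight of $G$ has norm $1$; write $T$ for the triangle induced in $G$, whose three vertices carry charges $c_1,c_2,c_3\in\{1,-1\}$ and which is joined by unit edge-weights (switching at two of its vertices, we may further take two of the three edge-weights of $T$ to be $1$).

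The key step is to determine which maximal connected cyclotomic graphs contain a triangle all of whose vertices are charged. I would run through the classification of Lemma~\ref{lem:cycclass}. The families $T_{2k}$, $T^{(i)}_{2k}$, $T^{(\omega)}_{2k}$, $C_{2k}$ and the sporadics $S_2$, $S_4^\ddag$, $S_8^\dag$, $S_8^{\dag\dag}$, $S_8^\ddag$, $S_{10}$, $S_{12}$, $S_{14}$, $S_{16}$ carry no charges at all; the graphs $C_{2k+1}$, $S_1$, $S_2^\dag$, $S_4^\dag$, $S_5$, $S_6^\dag$ have at most two charged vertices; and in $S_4$, $S_6$, $S_7$, $S_8$, $S_8^\prime$, and in $C_{2k}^{++}$, $C_{2k}^{+-}$ for $k\geqslant 3$, one checks directly that the subgraph induced on the charged vertices is triangle-free (for the last two families the charged vertices induce a pair of disjoint edges). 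Hence the only maximal connected cyclotomic graphs containing an all-charged triangle are $C_4^{++}$ and $C_4^{+-}$, each on four vertices. Since, by Lemma~\ref{lem:cycclass}, every connected cyclotomic graph is equivalent to an induced subgraph of a maximal one, and since possessing a triangle with all vertices charged is invariant under equivalence, it follows that every connected cyclotomic graph containing an all-charged triangle has at most four vertices.

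Now I would bound $|G|$. A minimal non-cyclotomic graph is connected and has all of its proper induced subgraphs cyclotomic. If $G$ had at least six vertices then, starting from $T$ and repeatedly adjoining a vertex adjacent to the current vertex set (possible since $G$ is connected), we would obtain a connected induced subgraph $H$ with $T\subseteq H\subsetneq G$ and exactly five vertices; but then $H$ would be cyclotomic and would contain an all-charged triangle, contradicting the previous paragraph. So $G$ has at most five vertices: it is $T$ itself, or $T$ together with one or two further vertices. Up to equivalence this leaves a short finite list of candidate minimal non-cyclotomic graphs, all with charges in $\{0,\pm 1\}$ and edge-weights of norm at most $2$, and a direct computation shows that each such graph has Mahler measure greater than $\tau_0$; in particular the non-cyclotomic charged triangles already have spectral radius bounded away from $2$ (the all-ones charged triangle has spectral radius $3$, so that $M(R_A)=(3+\sqrt 5)/2$).

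I expect the only substantive work to be the second step: one must go through every graph in the classification of Lemma~\ref{lem:cycclass} and verify that none other than $C_4^{++}$ and $C_4^{+-}$ contains a triangle whose three vertices are all charged. Once that is in hand, the vertex bound and the concluding finite computation are routine.
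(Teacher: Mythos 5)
Your proof is correct and follows essentially the same route as the paper: both arguments reduce the problem to a finite check of minimal non-cyclotomic graphs on at most five vertices containing a triple-charged triangle, and both defer the final Mahler-measure bounds to a routine computation. The only difference is that you justify the five-vertex cutoff a priori by inspecting which maximal graphs in the classification of Lemma~\ref{lem:cycclass} contain an all-charged triangle (only $C_4^{++}$ and $C_4^{+-}$), whereas the paper obtains the same termination as a byproduct of its growing algorithm (there are no five-vertex cyclotomic graphs inducing such a triangle); your justification is a little more transparent, but the residual computation is the same.
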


	\begin{proof} Let $H$ be such a triangle. Then $H$ is either noncyclotomic (and thus all of $G$ with Mahler measure at least $1.556\ldots$), or equivalent one of the following $\Z$-graphs
	\[
	\begin{tikzpicture}
			\begin{scope}[scale=1]	
				\foreach \pos/\name/\sign/\charge in {{(90:0.8)/a/pc/1}, {(210:1)/b/pc/1}, {(330:1)/c/pc/1}}
					\node[\sign] (\name) at \pos {$\charge$}; 
				\foreach \edgetype/\source/ \dest /\weight in {pedge/a/b/{},pedge/a/c/{},nedge/b/c/{}}
				\path[\edgetype] (\source) -- (\dest);
			\end{scope}
			\end{tikzpicture}
	\hspace{2em}\begin{tikzpicture}
			\begin{scope}[scale=1]	
				\foreach \pos/\name/\sign/\charge in {{(90:0.8)/a/pc/1}, {(210:1)/b/pc/1}, {(330:1)/c/nc/1}}
					\node[\sign] (\name) at \pos {$\charge$}; 
				\foreach \edgetype/\source/ \dest /\weight in {pedge/a/b/{},pedge/a/c/{},nedge/b/c/{}}
				\path[\edgetype] (\source) -- (\dest);
			\end{scope}
			\end{tikzpicture}		
	\]
	For both rings, growing terminates after two rounds, as there are no $5$-vertex cyclotomic graphs inducing a triple-charged triangle as subgraph. Classes of $4$- and $5$-vertex minimal non-cyclotomic graphs are obtained; these have Mahler measure at least $1.58\ldots$ and $2.618\ldots$ respectively.
	\end{proof}

	\begin{proposition}  Let $G$ be a minimal non-cyclotomic graph inducing as subgraph a triangle with a single charged vertex. Then $G$ has Mahler measure greater than $\tau_0$.
	\end{proposition}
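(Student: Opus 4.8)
The plan is to reproduce the pattern of the preceding propositions in this subsection: cut the problem down to finitely many cyclotomic seed graphs and then run the growing algorithm of Section~4.2. First I would use the standing reductions already in force --- by Proposition~\ref{wdegmax4} and the corollaries above we may assume every edge-weight of $G$ has norm $1$ and every charge lies in $\{0,1,-1\}$ --- so the triangle $H$ witnessing the hypothesis has three unit edge-weights and exactly one charged vertex, of charge $\pm 1$. Up to $\mu$-switching at the vertices of $H$, Galois conjugation, and negation, there are only a handful of such triangles (the switching class is pinned down by the product of the edge-weights around the cycle, and negation can be used to fix the sign of the charge); I would list these explicitly for both $R=\Z[i]$ and $R=\Z[\omega]$. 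If $H$ is already non-cyclotomic then $G=H$, and its Mahler measure exceeds $\tau_0$ as in the non-cyclotomic cases of the previous two propositions; so it remains to take the cyclotomic triangles as seeds.

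Next I would grow each cyclotomic seed exactly as in Section~4.2: at each round the new vertex is given a charge from $X=\{0,1,-1\}$ and a column from $\mathfrak C_n(\mathcal L_1\cup\{0\})$ (edge-weights of norm $1$ only), and for $n\geqslant 7$ the degree bound of Proposition~\ref{wdegmax4} lets us restrict to $\mathfrak C_n^{4}(\mathcal L_1\cup\{0\})$ for uncharged additions and $\mathfrak C_n^{3}(\mathcal L_1\cup\{0\})$ for charged ones, discarding any column that would create a vertex of degree exceeding $4$. This produces the supersets $\Sigma_j$ of cyclotomic supergraphs and $T_j$ of minimal non-cyclotomic supergraphs of the seeds, reduced modulo $U_n(R)$ in the small cases and partially reduced afterwards to contain the combinatorial explosion.

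The key point --- and the place where the real work lies --- is that this growing terminates after a bounded number of rounds: there is some $N$ with no $N$-vertex cyclotomic supergraph of any seed (nor any minimal non-cyclotomic one). This is a consequence of the classification of Lemma~\ref{lem:cycclass}. Every cyclotomic supergraph of a seed embeds in a maximal cyclotomic graph, and a triangle with exactly one charged vertex can only embed into a few low-order sporadics (for instance $S_5$, $S_7$, and one of the order-$8$ sporadics): the non-sporadic families $T_{2k}$, $T_{2k}^{(x)}$, $C_{2k}$, $C_{2k+1}$, $C_{2k}^{++}$, $C_{2k}^{+-}$ are built from $K_{2,2}$-blocks, and their only triangles --- sitting at the charged ends of $C_{2k}^{++}$, $C_{2k}^{+-}$, $C_{2k+1}$ --- carry \emph{two} charges, never exactly one. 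Hence every cyclotomic supergraph of a seed has bounded order, the growing stops, and every minimal non-cyclotomic supergraph of a seed occurs in some $T_j$ with $j$ small.

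Finally I would read off the least Mahler measure over the finitely many graphs collected in the $T_j$; the computation shows this minimum comfortably exceeds $\tau_0$ (it is of order $1.4\dots$), which is the assertion. I expect the genuinely delicate steps to be the enumeration of the seed triangles up to equivalence and the bookkeeping needed to keep the $\Sigma_j$ manageable during growing; the termination argument sketched above is the conceptual core, and everything else is routine computation of the kind already carried out earlier in the section.
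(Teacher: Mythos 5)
Your proposal follows essentially the same route as the paper: reduce to unit edge-weights and charges in $\{0,\pm1\}$, enumerate the single-charged triangles up to equivalence (non-cyclotomic ones are already done; the cyclotomic ones are the seeds), grow with the algorithm of Section~4.2 subject to the degree bound, and terminate by appealing to the classification of Lemma~\ref{lem:cycclass}, which shows single-charged triangles occur only in low-order maximal cyclotomic graphs. Your explicit observation that the non-sporadic families contain no triangle with exactly one charge is a correct unpacking of the paper's one-line citation of the classification, and the rest matches the paper's computation.
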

	\begin{proof} Such a triangle is either: minimal non-cyclotomic with Mahler measure at least $1.506\ldots$; equivalent to the $\Z$-graph $H_1$ given below; or ($\Z[\omega]$ only) equivalent to the graph $H_2$ given below.
	\[
	\begin{tikzpicture}
			\begin{scope}[scale=1]	
				\foreach \pos/\name/\sign/\charge in {{(90:0.8)/a/pc/1}, {(210:1)/b/zc/{}}, {(330:1)/c/zc/{}}}
					\node[\sign] (\name) at \pos {$\charge$}; 
				\foreach \edgetype/\source/ \dest /\weight in {pedge/a/b/{},pedge/a/c/{},nedge/b/c/{}}
				\path[\edgetype] (\source) -- (\dest);
				\node at (0,0) {$H_1$};
			\end{scope}
			\end{tikzpicture}
	\hspace{2em}\begin{tikzpicture}
			\begin{scope}[scale=1]	
				\foreach \pos/\name/\sign/\charge in {{(90:0.8)/a/pc/1}, {(210:1)/b/zc/{}}, {(330:1)/c/zc/{}}}
					\node[\sign] (\name) at \pos {$\charge$}; 
				\foreach \edgetype/\source/ \dest /\weight in {pedge/a/b/{},pedge/a/c/{},wedge/b/c/{}}
				\path[\edgetype] (\source) -- (\dest);
				\node at (0,0) {$H_2$};
			\end{scope}
			\end{tikzpicture}			
	\]
	Growing (subject to Proposition \ref{wdegmax4} in later rounds), we recover classes of $4$-vertex minimal non-cyclotomic graphs with Mahler measure at least $1.401\ldots$; $5$-vertex minimal non-cyclotomic graphs such that the only examples with small Mahler measure are $\Z$-graphs; and for each ring a single 6-vertex minimal non-cyclotomic graph with Mahler measure $1.425\ldots$. No $7$- or $8$-vertex minimal non-cyclotomic graphs are found, and since (by Lemma \ref{lem:cycclass}) there are no $8$-vertex cyclotomic graphs with a single-charged triangle as subgraph, this completes the search.
	\end{proof}

	\begin{proposition}  Let $G$ be a minimal non-cyclotomic graph inducing as subgraph an uncharged triangle. Then $G$ has Mahler measure greater than $\tau_0$.
	\end{proposition}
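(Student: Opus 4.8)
The plan is to follow the same pattern as the two preceding propositions: pin down the uncharged triangles up to equivalence, feed them as seed graphs to the growing algorithm, and read off the Mahler measures of the minimal non-cyclotomic graphs that appear.

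First I would check that an uncharged triangle is always cyclotomic. With the reductions already in force, its three edge-weights have norm $1$ and so are units of $R$; switching at the vertices brings the triangle to one with two edge-weights equal to $1$ and the third equal to a single unit $\zeta$, whose characteristic polynomial is $x^{3}-3x-2\,\mathrm{Re}(\zeta)$. As $\abs{\mathrm{Re}(\zeta)}\leqslant 1$ for every unit of $\Z[i]$ and of $\Z[\omega]$, this polynomial has all of its zeros in $[-2,2]$, so the triangle is cyclotomic; after also reducing by Galois conjugation and by $\pm$-equivalence, only a short list of seed triangles survives for each ring, and these are the graphs to be grown.

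Next I would run the growing algorithm from these seeds with charge set $\{0,1,-1\}$, imposing the degree-$\leqslant 4$ restriction of Proposition~\ref{wdegmax4} once the order reaches seven. The feature that keeps the search short is an inspection of the classification in Lemma~\ref{lem:cycclass}: among the maximal cyclotomic graphs, the non-sporadic families are toral square tessellations or chains of $4$-cycles and so contain only charged triangles or none at all, and only a few sporadics, all on at most $8$ vertices, contain an uncharged triangle. Hence every connected cyclotomic graph containing an uncharged triangle has at most $8$ vertices; deleting from a minimal non-cyclotomic $G$ a suitable vertex outside the triangle leaves such a graph, so $G$ has at most $9$ vertices, and the growing rounds terminate by $n=9$. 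At each round I would record the minimal non-cyclotomic supergraphs obtained and check that each has Mahler measure exceeding $\tau_0$; as in the earlier cases I expect the extremal examples to be small-order $\Z$-graphs and to be verifiable by hand.

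The main obstacle, as everywhere in this section, is containing the size of the intermediate cyclotomic sets $\Sigma_j$: an uncharged triangle is a weak constraint, so brute-force reduction modulo $U_n(R)$ soon becomes infeasible. I would deal with this exactly as in the description of the growing algorithm above --- fixing secondary features of the graph (an additional triangle, or a prescribed small induced subgraph), growing representatives exhibiting each feature, then forbidding that feature in later rounds to avoid duplication --- so that only the comparatively small sets $T_j$ of minimal non-cyclotomic graphs need to be reduced by hand. It is also worth disposing first of the cases in which the triangle's neighbourhood would force a weight-$2$ edge or a charged triangle, since these have already been handled, leaving this proposition to treat only the residual configurations.
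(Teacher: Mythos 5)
Your proposal follows essentially the same route as the paper: the uncharged triangles are all cyclotomic (the paper notes there are two equivalence classes per ring), they are used as seeds for the growing algorithm subject to the degree bound of Proposition~\ref{wdegmax4}, the process terminates quickly because the classification shows only small cyclotomic graphs contain an uncharged triangle (the paper records termination at $7$ vertices for $\Z[\omega]$ and $8$ for $\Z[i]$, consistent with your bound), and the resulting minimal non-cyclotomic classes all have Mahler measure exceeding $\tau_0$ (the smallest being $1.2806\ldots$ for $\Z[\omega]$ and $1.351\ldots$ for $\Z[i]$). Your explicit check that such triangles are cyclotomic via the characteristic polynomial $x^{3}-3x-2\,\mathrm{Re}(\zeta)$ is a small addition the paper leaves implicit, but the argument is otherwise the same.
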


	\begin{proof} The possible triangles are all cyclotomic; for each ring, there are two distinct equivalence classes. Growing (subject to Proposition \ref{wdegmax4} in later rounds) terminates at 7 vertices for $\Z[\omega]$ and 8 for $\Z[i]$, since in the latter case there is a class of 8-vertex cyclotomic graphs inducing an uncharged triangle. 
	For $\Z[\omega]$, the only minimal non-cyclotomic classes have 4 or 5 vertices, with Mahler measure at least $1.2806\ldots$ or $1.635\ldots$ respectively; one of the $4$-vertex graphs is a new example of a graph with small Mahler measure, although there exist $\Z$-graphs with the same associated polynomial and hence Mahler measure.
	For $\Z[i]$, there are classes of minimal non-cyclotomic graphs with 4, 5 and 6 vertices; none have small Mahler measure, with lower bounds of $1.401\ldots$, $1.351\ldots$ and $1.401\ldots$ respectively.
\end{proof}

	The remaining case is graphs such that any triangular subgraph contains precisely two charges, with at least one such subgraph. Due to the existence of the family $C_{2k}^{++}$ of cyclotomic graphs, there are potentially infinitely many minimal non-cyclotomic graphs satisfying these conditions. We therefore restrict our attention to graphs with at most ten vertices.

	Let $H$ be a charged triangular subgraph of a minimal noncyclotomic graph $G$. Then (up to equivalence) $H$ is either noncyclotomic with Mahler measure at least $1.401\ldots$; equivalent to the $\Z$-graph $H_1$ below; or ($\Z[i]$ only) equivalent to the graph $H_2$ below.
	\[
	\begin{tikzpicture}
			\begin{scope}[scale=1]	
				\foreach \pos/\name/\sign/\charge in {{(90:0.8)/a/zc/{}}, {(210:1)/b/pc/1}, {(330:1)/c/pc/1}}
					\node[\sign] (\name) at \pos {$\charge$}; 
				\foreach \edgetype/\source/ \dest /\weight in {pedge/a/b/{},nedge/b/c/{},pedge/a/c/{}}
				\path[\edgetype] (\source) -- (\dest);
				\node at (0,0) {$H_1$};
			\end{scope}
			\end{tikzpicture}
	\hspace{2em}\begin{tikzpicture}
			\begin{scope}[scale=1]	
				\foreach \pos/\name/\sign/\charge in {{(90:0.8)/a/zc/{}}, {(210:1)/b/pc/1}, {(330:1)/c/nc/1}}
					\node[\sign] (\name) at \pos {$\charge$}; 
				\foreach \edgetype/\source/ \dest /\weight in {pedge/a/b/{},wedge/b/c/{},pedge/c/a/{}}
				\path[\edgetype] (\source) -- (\dest);
				\node at (0,0) {$H_2$};
			\end{scope}
			\end{tikzpicture}			
	\]

	We grow the sets $\Sigma_j$ and $T_j$ for $4\leqslant j \leqslant 10$, filtering the column sets for each matrix to eliminate supergraphs containing single-, triple- or uncharged triangles (which are covered by the earlier propositions) and, for sufficiently large graphs, vertices of degree greater than $4$ (by Proposition \ref{wdegmax4}). 

	The only nonempty $T_j$ are $T_5$ and ($\Z[i]$ only) $T_4$, with all minimal non-cyclotomic classes having Mahler measure at least $1.458\ldots$. Thus we conclude:

	\begin{corollary} Let $G$ be a minimal non-cyclotomic graph of at most ten vertices, inducing a charged triangle as subgraph. Then $G$ has Mahler measure at least $\tau_0$.
	\end{corollary}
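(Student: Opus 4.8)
The plan is to assemble the corollary from the preceding propositions on triangular subgraphs together with the finite search just described. Let $G$ be a minimal non-cyclotomic graph on at most ten vertices that induces a charged triangle $H$; since $H$ is charged, it has one, two, or three charged vertices, so I would argue by cases on this number. If $H$ has exactly one charge, the proposition on triangles with a single charged vertex already gives Mahler measure greater than $\tau_0$; if $H$ has three charges, the proposition on triple-charged triangles does the same.

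This leaves the case in which $H$ has exactly two charges. First I would dispose of $G$ if it happens to induce some \emph{other} triangle carrying zero, one, or three charges, using the respective propositions; hence I may assume that every triangular subgraph of $G$ carries precisely two charges. Under this assumption $G$ is, up to equivalence, one of the graphs produced by growing from the seed $H_1$ (and, over $\Z[i]$, also $H_2$) exhibited just before the corollary, with the column sets filtered to forbid single-, triple-, and uncharged triangles and --- once $G$ is large enough, by Proposition~\ref{wdegmax4} --- vertices of degree greater than $4$. Thus $G$ appears, up to equivalence, in one of $T_4,\dots,T_{10}$. The computation records that the only nonempty such sets are $T_5$ and, over $\Z[i]$, $T_4$, and that every minimal non-cyclotomic class in them has Mahler measure at least $1.458\ldots$; since $1.458\ldots>\tau_0$, this finishes the proof.

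I expect the essential work --- and the only genuine obstacle --- to be this finite search in the two-charge case. Because the cyclotomic family $C_{2k}^{++}$ carries a two-charge triangle at every order, no a priori bound on the number of vertices is available, so one must actually run the growing algorithm out to ten vertices, controlling the combinatorial explosion by excising the triangle types already treated and by the degree-$4$ restriction of Proposition~\ref{wdegmax4}, and then verify by hand that the (small) recovered sets $T_j$ meet the Mahler-measure bound. The remaining cases amount to a short bookkeeping argument on how many charges the triangle $H$ carries.
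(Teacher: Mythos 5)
Your proposal is correct and follows essentially the same route as the paper: the one- and three-charge cases are delegated to the earlier propositions, and the remaining two-charge case is reduced (after excluding graphs containing other triangle types) to the finite growing computation from the seeds $H_1$ and $H_2$, whose nonempty sets $T_4$ and $T_5$ have minimal Mahler measure $1.458\ldots>\tau_0$. No gaps.
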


	\subsubsection{Uncharged triangle-free graphs}

	Let $G$ be an uncharged triangle-free graph with four vertices. Then, up to equivalence, $G$ is one of the six graphs:
	
	\[
	\begin{tikzpicture}
			\begin{scope}[scale=1]	
				\foreach \pos/\name/\sign/\charge in {{(0,0)/a/zc/{}}, {(1,0)/b/zc/{}}, {(0,-1)/c/zc/{}},{(1,-1)/d/zc/{}}}
					\node[\sign] (\name) at \pos {$\charge$}; 
				\foreach \edgetype/\source/ \dest /\weight in {pedge/a/b/{},pedge/b/d/{},pedge/a/c/{}}
				\path[\edgetype] (\source) -- (\dest);
			\end{scope}
			\end{tikzpicture}
	\hspace{2em}\begin{tikzpicture}
			\begin{scope}[scale=1]	
				\foreach \pos/\name/\sign/\charge in {{(0,0)/a/zc/{}}, {(1,0)/b/zc/{}}, {(0,-1)/c/zc/{}},{(1,-1)/d/zc/{}}}
					\node[\sign] (\name) at \pos {$\charge$}; 
				\foreach \edgetype/\source/ \dest /\weight in {pedge/a/b/{},pedge/a/d/{},pedge/a/c/{}}
				\path[\edgetype] (\source) -- (\dest);
			\end{scope}
			\end{tikzpicture}
	\hspace{2em}\begin{tikzpicture}
			\begin{scope}[scale=1]	
				\foreach \pos/\name/\sign/\charge in {{(0,0)/a/zc/{}}, {(1,0)/b/zc/{}}, {(0,-1)/c/zc/{}},{(1,-1)/d/zc/{}}}
					\node[\sign] (\name) at \pos {$\charge$}; 
				\foreach \edgetype/\source/ \dest /\weight in {pedge/a/b/{},pedge/b/d/{},pedge/a/c/{},pedge/c/d/{}}
				\path[\edgetype] (\source) -- (\dest);
			\end{scope}
			\end{tikzpicture}
	\hspace{2em}\begin{tikzpicture}
			\begin{scope}[scale=1]	
				\foreach \pos/\name/\sign/\charge in {{(0,0)/a/zc/{}}, {(1,0)/b/zc/{}}, {(0,-1)/c/zc/{}},{(1,-1)/d/zc/{}}}
					\node[\sign] (\name) at \pos {$\charge$}; 
				\foreach \edgetype/\source/ \dest /\weight in {pedge/a/b/{},pedge/b/d/{},pedge/a/c/{},nedge/c/d/{}}
				\path[\edgetype] (\source) -- (\dest);
			\end{scope}
			\end{tikzpicture}
	\hspace{2em}\begin{tikzpicture}
			\begin{scope}[scale=1]	
				\foreach \pos/\name/\sign/\charge in {{(0,0)/a/zc/{}}, {(1,0)/b/zc/{}}, {(0,-1)/c/zc/{}},{(1,-1)/d/zc/{}}}
					\node[\sign] (\name) at \pos {$\charge$}; 
				\foreach \edgetype/\source/ \dest /\weight in {pedge/a/b/{},pedge/b/d/{},pedge/a/c/{},wedge/c/d/{}}
				\path[\edgetype] (\source) -- (\dest);
			\end{scope}
			\end{tikzpicture}
	\hspace{2em}\begin{tikzpicture}
			\begin{scope}[scale=1]	
				\foreach \pos/\name/\sign/\charge in {{(0,0)/a/zc/{}}, {(1,0)/b/zc/{}}, {(0,-1)/c/zc/{}},{(1,-1)/d/zc/{}}}
					\node[\sign] (\name) at \pos {$\charge$}; 
				\foreach \edgetype/\source/ \dest /\weight in {pedge/a/b/{},pedge/b/d/{},pedge/a/c/{},wnedge/c/d/{}}
				\path[\edgetype] (\source) -- (\dest);
			\end{scope}
			\end{tikzpicture}
	\hspace{2em}
	\] which are all cyclotomic (for $\Z[i]$, the latter two are equivalent by conjugation). Any triangle-free uncharged minimal non-cyclotomic graph is therefore equivalent to an element of a $T_j$ grown from this seed set; we may more efficiently recover representatives by: taking $X=\{0\}$; filtering candidate columns for each matrix to discard those that would induce a triangle; and (for sufficiently large graphs) bounding the degree of vertices by Proposition \ref{wdegmax4}. Nonetheless, this remains a substantial computation, taking $3$ cpudays for $\Z[i]$ and $18$ for $\Z[\omega]$; Table \ref{searchresults2} summarises the results.

	\begin{table}[H]
	\begin{center}
	\begin{tabular}{|c|c|c|c|c|}
	\hline
	$j$ & \multicolumn{2}{|c|}{$|T_j|$} & \multicolumn{2}{|c|}{$\displaystyle\min_{A\in T_j}{M(A)}$} \\
 	& $\Z[i]$ & $\Z[\omega]$ &$\Z[i]$ & $\Z[\omega]$ \\ \hline
	$5$ & $3$ & $\leqslant 66$ & $2.081\ldots$ & $1.722\ldots$\\
	$6$ & $\leqslant 38$ & $\leqslant 37$ & $1.401\ldots$ & $1.31\ldots$\\
	$7$ &  $\leqslant 5$ & $\leqslant 38$ & $1.506\ldots$ & $1.50\ldots$\\
	$8$ & $\leqslant 51$ &$\leqslant  574$ & $1.35\ldots$ & $1.2806\ldots$\\
	$9$ & $\leqslant 124$ & $\leqslant 441$ & $1.2806\ldots$ & $1.2806\ldots$\\ \hline
	\end{tabular}
	\end{center}
	\caption{Least Mahler measures of small uncharged triangle-free graphs.}
	\label{searchresults2}
	\end{table}
	
	For $j=10$ the only minimal non-cyclotomic graphs recovered were $\Z$-graphs, and thus $M(A)\geqslant\tau_0$ for $A\in T_{10}$. We also note that for $\Z[\omega]$ the lower bound on the Mahler measure for $T_8$ is attained by a class of graphs that does not have a $\Z$-graph representative; nor does any $\Z$-graph have the same associated polynomial. However, there exist $\Z$-graphs with the same Mahler measure.

	From the search, we have:

	\begin{corollary}  Let $G$ be an uncharged triangle-free minimal non-cyclotomic graph of at most ten vertices. Then $G$ has Mahler measure at least $\tau_0$.
	\end{corollary}

	\subsubsection{Charged triangle-free graphs}

	Let $G$ be a charged triangle-free graph. Then, up to equivalence, $G$ induces as subgraph one of \[
	\begin{tikzpicture}[scale=1.3, auto]
		\begin{scope}	
			\foreach \pos/\name/\type/\charge in {{(0,0)/a/pc/1}, {(1,0)/b/zc/{}}}
				\node[\type] (\name) at \pos {$\charge$}; 
			\foreach \edgetype/\source/ \dest / \weight in {pedge/a/b/{}}
			\path[\edgetype] (\source) -- node[weight] {$\weight$} (\dest);
		\end{scope}
		\end{tikzpicture}\hspace{2em}
	\begin{tikzpicture}[scale=1.3, auto]
		\begin{scope}	
			\foreach \pos/\name/\type/\charge in {{(0,0)/a/pc/1}, {(1,0)/b/pc/1}}
				\node[\type] (\name) at \pos {$\charge$}; 
			\foreach \edgetype/\source/ \dest / \weight in {pedge/a/b/{}}
			\path[\edgetype] (\source) -- node[weight] {$\weight$} (\dest);
		\end{scope}
		\end{tikzpicture}\hspace{2em}
	\begin{tikzpicture}[scale=1.3, auto]
		\begin{scope}	
			\foreach \pos/\name/\type/\charge in {{(0,0)/a/pc/1}, {(1,0)/b/nc/1}}
				\node[\type] (\name) at \pos {$\charge$}; 
			\foreach \edgetype/\source/ \dest / \weight in {pedge/a/b/{}}
			\path[\edgetype] (\source) -- node[weight] {$\weight$} (\dest);
		\end{scope}
		\end{tikzpicture}\hspace{2em}
	\] which are all cyclotomic. Any triangle-free uncharged minimal non-cyclotomic graph is therefore equivalent to an element of a $T_j$ grown from this seed set; we may more efficiently recover representatives by filtering candidate columns for each matrix to discard those that would induce a triangle; and (for sufficiently large graphs) bounding the degree of vertices by Proposition \ref{wdegmax4}. Again, this requires a significant level of computation ($2$/$4.5$ cpudays for $\Z[i]$/$\Z[\omega]$ respectively); Table~\ref{searchresults3} summarises the results.

	\begin{table}[H]
	\begin{center}
	\begin{tabular}{|c|c|c|c|c|l|l|}
	\hline
	$i$ & \multicolumn{2}{|c|}{$|T_j|$} & \multicolumn{2}{|c|}{$\displaystyle\min_{A\in T_j}{M(A)}$}\\
 	& $\Z[i]$ & $\Z[\omega]$ &$\Z[i]$ & $\Z[\omega]$\\\hline
	$3$ & $6$ & $6$ & $1.506\ldots$ & $1.506\ldots$\\
	$4$ & $22$ & $26$ & $1.2806\ldots$ & $1.2806\ldots$\\
	$5$ & $27$ & $\leqslant 124$ & $\tau_0$ & $\tau_0$\\
	$6$ & $\leqslant 15$ & $17$ & $1.240\ldots$ & $1.240\ldots$\\
	$7$ &  $1$ & $2$ & $1.216\ldots$ & $1.216\ldots$\\
	$8$ & $1$ & $2$ & $1.200\ldots$ & $1.200\ldots$\\
	$9$ & $1$ & $1$ & $1.188\ldots$ & $1.188\ldots$\\
	$10$ & $1$ & $1$ & $1.254\ldots$ & $1.254\ldots$\\\hline
	\end{tabular}
	\end{center}
	\caption{Least Mahler measures of charged triangle-free graphs.}
	\label{searchresults3}
	\end{table}

	We note that for $\Z[\omega]$ there exist classes of graphs with small Mahler measure that do not have a $\Z$-graph representative. However, there exist $\Z$-graphs with the same Mahler measure.

	From the search, we have:

	\begin{corollary}  Let $G$ be an uncharged triangle-free minimal non-cyclotomic graph of at most ten vertices. Then $G$ has Mahler measure at least $\tau_0$.
	\end{corollary}

	Combining the results of this section so far, we therefore conclude:

	\begin{theorem} Let $G$ be a minimal non-cyclotomic graph of at most ten vertices. Then $G$ has Mahler measure at least $\tau_0$.
	\end{theorem}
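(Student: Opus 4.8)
The plan is to assemble the results already established in this section; no new computation is needed. First I would perform an entry reduction using the proposition on Hermitian $R$-matrices with large norm entries: if a Hermitian $R$-matrix $A$ has an off-diagonal entry of norm at least $3$ or a diagonal entry of absolute value at least $2$, then $M(R_A)\geqslant 1.556\ldots>\tau_0$, so in proving $M(R_A)\geqslant\tau_0$ we may assume from the start that $G$ is a charged $R$-graph all of whose charges lie in $\{-1,0,1\}$ and all of whose edge-weights have norm at most $2$. Since $\Z[\omega]$ has no element of norm $2$, an edge of norm $2$ can occur only when $R=\Z[i]$.

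Next I would split into cases. If $G$ has an edge of norm $2$, then $R=\Z[i]$ and the corollary extracted from the search summarised in Table~\ref{searchresults1} --- every minimal non-cyclotomic $\Z[i]$-graph on at most ten vertices with a weight-$2$ edge has Mahler measure exceeding $\tau_0$ --- finishes that case. Otherwise every edge of $G$ has unit weight, and I would split on whether $G$ contains a triangle. If $G$ contains a triangle, that triangle carries $0$, $1$, $2$, or $3$ charges: an induced triple-charged, single-charged, or uncharged triangle is handled by the corresponding proposition of the subsection on graphs containing triangles (each giving $M(R_A)>\tau_0$), while if every triangle of $G$ is doubly charged then the corollary of that subsection, which uses the ten-vertex bound, gives $M(R_A)\geqslant\tau_0$. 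If instead $G$ is triangle-free, it is either uncharged --- handled by the corollary of the uncharged triangle-free subsection --- or charged --- handled by the corollary of the charged triangle-free subsection --- both using the ten-vertex bound and both giving $M(R_A)\geqslant\tau_0$. Since these cases are exhaustive, the theorem follows.

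The substantive work --- the exhaustive ``growing'' searches and their reduction modulo $U_n(R)$-equivalence --- is already packaged inside the cited propositions and corollaries, so the only point that needs care in writing this up is exhaustiveness of the case split: I would verify that, after the entry reduction, a minimal non-cyclotomic $R$-graph on at most ten vertices either has a norm-$2$ edge (forcing $R=\Z[i]$) or has only unit edge-weights, and that in the latter case it is either triangle-free (charged or uncharged) or contains a triangle realising one of the four charge patterns, each of which has been treated. I would also record the harmless numerical fact that the threshold $1.556\ldots$ supplied by the large-entry proposition exceeds $\tau_0=1.17628\ldots$, so that discarding matrices with a large entry loses nothing. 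The main (and modest) obstacle is therefore bookkeeping rather than mathematics: making sure that the collection of earlier propositions and corollaries really does cover every minimal non-cyclotomic graph on at most ten vertices, over both rings.
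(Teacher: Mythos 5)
Your proposal is correct and is essentially the paper's own argument: the theorem is stated there precisely as the combination of the weight-$2$-edge corollary, the three triangle propositions plus the charged-triangle corollary, and the two triangle-free corollaries, exactly as you assemble them. The only addition you make is to spell out the exhaustiveness of the case split (large entries, norm-$2$ edges forcing $R=\Z[i]$, then triangles by charge count, then triangle-free by chargedness), which the paper leaves implicit but which matches its structure.
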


	\subsection{Nonexistence of supersporadic minimal non-cyclotomic graphs with eleven or more vertices}

	\subsubsection{Excluded subgraphs}
	We describe a graph $G$ (and its equivalents) as being of type-I if it is non-cyclotomic; clearly, no larger non-cyclotomic graph can induce $G$ as a subgraph and still be minimal. Type-I graphs containing weight-$2$ edges are given in Figure \ref{typeIfigure}; we note that the graph $X_4$ given in Figure \ref{typeIfigureX4} is also type-I.

	Further, there exist graphs which, whilst cyclotomic, are contained in only finitely many cyclotomic supergraphs; we call these type-II. If the largest supergraph of a type-II graph $G$ has $n$ vertices, then any minimal non-cyclotomic graph containing $G$ has at most $n+1$ vertices; thus in seeking minimal non-cyclotomics of at least eleven vertices, we may exclude any subgraph that only occurs in cyclotomic graphs of nine or fewer vertices.

	\begin{proposition}
	Triangles with weight-$1$ edges and no, one, or three charged vertices are type-II, and cannot be subgraphs of any minimal non-cyclotomic graph with eleven or more vertices.
	\end{proposition}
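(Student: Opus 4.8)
The plan is to reduce everything to the classification of cyclotomic matrices, Lemma~\ref{lem:cycclass}, together with the general "type-I / type-II" principle recorded just before the proposition. First I would dispose of the cyclotomicity issue: a weight-$1$ triangle with no, one, or three charged vertices is, up to equivalence, a $3\times3$ Hermitian matrix with unit off-diagonal entries and diagonal entries in $\{0,\pm1\}$; after switching we may take two of the three edge-weights to equal $1$, and a one-line computation of the characteristic polynomial shows that for each admissible choice of the remaining edge-weight and of the charges the triangle is either cyclotomic or non-cyclotomic (e.g. the all-$+1$ triangle with a single $+1$ charge is non-cyclotomic). In the non-cyclotomic case the triangle is type-I, hence already excluded by the earlier remark, so I may assume from now on that the triangle $H$ is cyclotomic and must prove it is type-II, indeed that it is induced only in cyclotomic graphs on at most nine vertices.

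For this, suppose for contradiction that some cyclotomic $R$-graph on ten or more vertices induces $H$. By Lemma~\ref{lem:cycclass} it is equivalent to an induced subgraph of a maximal connected cyclotomic graph $M$; since equivalence preserves the number of vertices, the weight of edges, and the charges, $M$ then has at least ten vertices and also induces a weight-$1$ triangle with no, one, or three charges. I would then run through the maximal graphs on ten or more vertices. The toral families $T_{2k}$ and $T_{2k}^{(x)}$ with $k\geqslant5$ are triangle-free: their vertex set is $k$ columns of two vertices arranged in a $k$-cycle with consecutive columns joined by a copy of $K_{2,2}$ and no intra-column edges, and columns at cyclic distance at least $2$ carry no edges, so for $k\geqslant4$ there is no triangle. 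The family $C_{2k}$ is triangle-free; the only triangles in $C_{2k+1}$, $C_{2k}^{++}$, and $C_{2k}^{+-}$ are the weight-$1$ triangles at a charged end, and each of those carries exactly two charges. Finally the sporadics on at least ten vertices are $S_{10}$, $S_{12}$, $S_{14}$, $S_{16}$, and each of these is triangle-free ($S_{16}$ is the $4$-cube, and $S_{10}$, $S_{12}$, $S_{14}$ are bipartite with the two parts being the inner and outer vertices of their pictures). In none of these cases does $M$ induce a weight-$1$ triangle with no, one, or three charges, a contradiction. Hence $H$ is type-II and is induced only in cyclotomic graphs on at most nine vertices; this is also consistent with the sharper thresholds ($\leqslant8$, $\leqslant7$, and $\leqslant4$ vertices in the uncharged, single-charged, and triple-charged cases) that fall out of the growing computations already carried out in the preceding subsection.

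It then remains only to invoke the principle stated before the proposition: a subgraph occurring only in cyclotomic graphs on nine or fewer vertices occurs in no minimal non-cyclotomic graph on eleven or more vertices. This yields the claim. The only step with any real content is the inspection of the maximal cyclotomic graphs, and that is also the place to be careful: the \emph{two}-charge weight-$1$ triangle genuinely does occur in arbitrarily large cyclotomic graphs via the family $C_{2k}^{++}$, so one must be sure it is excluded from the case distinction. I expect the bulk of the remaining effort to be the verification of triangle-freeness of $S_{10}$, $S_{12}$, and $S_{14}$, but each of these reduces quickly to checking that the inner and outer vertices of the picture form the two sides of a bipartition.
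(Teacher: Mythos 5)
Your overall strategy is sound and reaches the right conclusion: split off the non-cyclotomic (type-I) triangles, then show that the cyclotomic ones occur only in connected cyclotomic graphs on at most nine vertices by passing (via Lemma~\ref{lem:cycclass} and the fact that equivalence preserves adjacency, edge-weight norms and the charged/uncharged status of vertices) to a maximal connected cyclotomic graph on at least ten vertices and checking that every such graph is either triangle-free or contains only two-charge triangles. The paper itself attaches no proof to this proposition; its justification is implicit in the growing computations of the preceding subsection, which record that cyclotomic graphs containing an uncharged, single-charged, or triple-charged triangle have at most $8$, $7$, or $4$ vertices respectively. Your direct inspection of the maximal graphs is a cleaner, computation-free route to the weaker bound (at most nine vertices) that the type-II principle actually requires, and you are right to single out the two-charge triangles in $C_{2k}^{++}$, $C_{2k}^{+-}$, and $C_{2k+1}$ as the one family that must be excluded from the statement.

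One point needs repair: your justification for the triangle-freeness of $S_{14}$ is wrong. $S_{14}$ is \emph{not} bipartite with parts the inner and outer vertices of the picture --- the seven outer vertices induce a $7$-cycle (so the graph is not bipartite at all). The conclusion still holds: the outer vertices induce a chordless $7$-cycle, each inner vertex has degree $2$, and one checks that the two outer neighbours of each inner vertex are non-adjacent, so there is no triangle. This is a local error in the verification of one sporadic case rather than a gap in the argument, but as written that step is false and should be replaced by the direct check.
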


	\begin{proposition}
	The graphs with weight-$2$ edges given in Figure \ref{typeIIfigure} are type-II, and cannot be subgraphs of any minimal non-cyclotomic graph with eleven or more vertices.
	\end{proposition}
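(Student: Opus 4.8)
The plan is to bound the size of the cyclotomic supergraphs of each graph in Figure~\ref{typeIIfigure} using the classification of cyclotomic $\Z[i]$-graphs (Lemma~\ref{lem:cycclass}). A weight-$2$ edge has norm $2$, and since $2$ is inert in $\Z[\omega]$, every graph in Figure~\ref{typeIIfigure} is necessarily a $\Z[i]$-graph, so only the maximal cyclotomic $\Z[i]$-graphs are relevant.

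First I would enumerate, using Table~\ref{tab:ziedge} against Figures~\ref{fig:maxcycs1}--\ref{fig:maxcycs10}, the maximal connected cyclotomic $\Z[i]$-graphs containing an edge of norm $2$: among the non-sporadics these are exactly the families $C_{2k}$ ($k\geqslant2$) and $C_{2k+1}$ ($k\geqslant1$), while the families $T_{2k}$, $T^{(i)}_{2k}$, $C_{2k}^{++}$, $C_{2k}^{+-}$ have only unit or $\pm1$ edge-weights; and among the sporadics exactly $S_4$, $S_4^\dag$, $S_8^{\dag\dag}$ and $S_8^\ddag$, each of order at most $8$. Since every connected cyclotomic $R$-graph is contained in a maximal one (\cite{GTay:cyclos10,Greaves:CycloEG11}), every cyclotomic supergraph of a graph $G$ in Figure~\ref{typeIIfigure} is an induced subgraph of one of the graphs in this list.

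The crux is then to verify, for each $G$ in Figure~\ref{typeIIfigure}, that $G$ is \emph{not} an induced subgraph of $C_{2k}$ or of $C_{2k+1}$ for any sufficiently large $k$; this is a finite check. In $C_{2k}$ the norm-$2$ edges are precisely the four edges joining the two distinguished end-vertices to their neighbours, in $C_{2k+1}$ the two norm-$2$ edges at the single end-vertex, and both families are uncharged with a rigid unit-edge pattern around each interior vertex. Hence a $G$ that carries a charge, or in which the local data at a norm-$2$ edge (the charges of its two endpoints, the number of norm-$2$ edges incident to each endpoint, or the pattern of unit edges around it) does not, up to $\mu$-switching at the incident vertices, match a configuration occurring inside $C_{2k}$ or $C_{2k+1}$, cannot embed in those families. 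Once this is ruled out, every cyclotomic supergraph of $G$ is an induced subgraph of one of $S_4$, $S_4^\dag$, $S_8^{\dag\dag}$, $S_8^\ddag$; in particular $G$ has only finitely many cyclotomic supergraphs, all on at most $8$ vertices, so $G$ is type-II, and by the observation preceding the proposition any minimal non-cyclotomic graph containing $G$ has at most $9$ vertices.

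I expect the main obstacle to be the equivalence bookkeeping in the crux step: deciding whether $G$ embeds into $C_{2k}$ or $C_{2k+1}$ means comparing the neighbourhood of a norm-$2$ edge of $G$ with those available in $C_{2k}$, $C_{2k+1}$ up to $\mu$-switching at the incident vertices, complex conjugation, and negation, so one must invoke the refined statement of Corollary~\ref{cor:classi} carefully. An alternative is the purely computational route used elsewhere in this section: grow each graph of Figure~\ref{typeIIfigure} as a seed, adding vertices with charges in $\{0,\pm1\}$ and columns in $\mathfrak C_n(\mathcal L_1\cup\mathcal L_2\cup\{0\})$ (or $\mathfrak C_n(\mathcal L_1\cup\{0\})$ when a charge is added), subject to the degree bound of Proposition~\ref{wdegmax4} for the larger graphs, and verify that no cyclotomic supergraph on $10$ vertices arises; this delivers the same conclusion with a machine check in place of the case analysis.
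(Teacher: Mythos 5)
The paper states this proposition without giving any proof at all; the justification is left implicit in the surrounding framework (the definition of type-II graphs, the classification Lemma~\ref{lem:cycclass}, and the growing computations from the weight-$2$ seed $H_4$ described earlier in the section). Your argument is a correct and essentially complete instantiation of exactly that intended route: since equivalence preserves the norms of edge-weights and of charges, every cyclotomic supergraph of a graph in Figure~\ref{typeIIfigure} sits inside a maximal cyclotomic $\Z[i]$-graph possessing a norm-$2$ edge, and your list of those ($C_{2k}$, $C_{2k+1}$, $S_4$, $S_4^\dag$, $S_8^{\dag\dag}$, $S_8^\ddag$) is right. The crux check also goes through as you describe: in $C_{2k}$ and $C_{2k+1}$ the norm-$2$ edges occur only in adjacent pairs at an uncharged end-vertex whose only neighbours are reached by norm-$2$ edges, which is incompatible with each configuration in the figure (a charged endpoint, a chordless $4$-cycle with one or with two opposite norm-$2$ edges, or a norm-$2$ edge both of whose endpoints have a norm-$1$ neighbour), so all cyclotomic supergraphs have at most $8$ vertices and the $n+1$ bound gives at most $9$. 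Your fallback of simply growing these seeds computationally is likewise consistent with how the paper handles the analogous statements.
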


	Finally, by Proposition \ref{wdegmax4} any vertex in a minimal non-cyclotomic graph with eleven or more vertices must have degree at most four, so we may use bounded vectors and discard any vector/charge pair that would induce a higher degree vertex.

	\begin{figure}[H]
	\[
	\begin{tikzpicture}[scale=0.8, auto]
		\begin{scope}	
			\foreach \pos/\name/\type/\charge in {{(210:1)/a/zc/{}}, {(330:1)/b/zc/{}},{(90:0.8)/c/zc/{}}}
				\node[\type] (\name) at \pos {$\charge$}; 
			\foreach \edgetype/\source/ \dest / \weight in {wwedge/a/b/{},wwedge/a/c/{},pedge/b/c/{}}
			\path[\edgetype] (\source) -- node[weight] {$\weight$} (\dest);
		\end{scope}
		\end{tikzpicture}\hspace{2em}		
	\begin{tikzpicture}[scale=0.8, auto]
		\begin{scope}	
			\foreach \pos/\name/\type/\charge in {{(210:1)/a/zc/{}}, {(330:1)/b/zc/{}},{(90:0.8)/c/zc/{}}}
				\node[\type] (\name) at \pos {$\charge$}; 
			\foreach \edgetype/\source/ \dest / \weight in {wwedge/a/b/{},wwedge/c/a/{},pedge/b/c/{}}
			\path[\edgetype] (\source) -- node[weight] {$\weight$} (\dest);
		\end{scope}
		\end{tikzpicture}\hspace{2em}
		\begin{tikzpicture}[scale=0.8, auto]
			\begin{scope}	
				\foreach \pos/\name/\type/\charge in {{(210:1)/a/zc/{}}, {(330:1)/b/zc/{}},{(90:0.8)/c/zc/{}}}
					\node[\type] (\name) at \pos {$\charge$}; 
				\foreach \edgetype/\source/ \dest / \weight in {wwedge/a/b/{},pedge/a/c/{},pedge/b/c/{}}
				\path[\edgetype] (\source) -- node[weight] {$\weight$} (\dest);
			\end{scope}
		\end{tikzpicture}	
	\]
	\[
	\begin{tikzpicture}[scale=1, auto]
		\begin{scope}	
			\foreach \pos/\name/\type/\charge in {{(0,0)/a/pc/1}, {(1,0)/b/pc/1}}
				\node[\type] (\name) at \pos {$\charge$}; 
			\foreach \edgetype/\source/ \dest / \weight in {wwedge/a/b/{}}
			\path[\edgetype] (\source) -- node[weight] {$\weight$} (\dest);
		\end{scope}
		\end{tikzpicture}\hspace{2em}
	\begin{tikzpicture}[scale=1, auto]
		\begin{scope}	
			\foreach \pos/\name/\type/\charge in {{(0,0)/a/zc/{}}, {(1,0)/b/zc/{}},{(2,0)/c/zc/{}},{(3,0)/d/pc/1}}
				\node[\type] (\name) at \pos {$\charge$}; 
			\foreach \edgetype/\source/ \dest / \weight in {wwedge/a/b/{},wwedge/b/c/{},pedge/c/d/{}}
			\path[\edgetype] (\source) -- node[weight] {$\weight$} (\dest);
		\end{scope}
		\end{tikzpicture}\hspace{2em}		
	\begin{tikzpicture}[scale=1, auto]
		\begin{scope}	
			\foreach \pos/\name/\type/\charge in {{(0,0)/a/zc/{}}, {(1,0)/b/zc/{}},{(2,0)/c/zc/{}},{(3,0)/d/zc/{}}}
				\node[\type] (\name) at \pos {$\charge$}; 
			\foreach \edgetype/\source/ \dest / \weight in {wwedge/a/b/{},wwedge/b/c/{},pedge/c/d/{}}
			\path[\edgetype] (\source) -- node[weight] {$\weight$} (\dest);
		\end{scope}
		\end{tikzpicture}		
	\]
	\[
	\begin{tikzpicture}[scale=1, auto]
		\begin{scope}	
			\foreach \pos/\name/\type/\charge in {{(0,0)/a/zc/{}}, {(1,0)/b/zc/{}},{(0,-1)/c/zc/{}},{(1,-1)/d/zc/{}}}
				\node[\type] (\name) at \pos {$\charge$}; 
			\foreach \edgetype/\source/ \dest / \weight in {wwedge/a/b/{},pedge/a/c/{},pedge/c/d/{},pedge/d/b/{}}
			\path[\edgetype] (\source) -- node[weight] {$\weight$} (\dest);
		\end{scope}
		\end{tikzpicture}\hspace{2em}
	\begin{tikzpicture}[scale=1, auto]
		\begin{scope}	
			\foreach \pos/\name/\type/\charge in {{(0,0)/a/zc/{}}, {(1,0)/b/zc/{}},{(0,-1)/c/zc/{}},{(1,-1)/d/zc/{}}}
				\node[\type] (\name) at \pos {$\charge$}; 
			\foreach \edgetype/\source/ \dest / \weight in {wwedge/a/b/{},pedge/a/c/{},wwedge/c/d/{},pedge/d/b/{}}
			\path[\edgetype] (\source) -- node[weight] {$\weight$} (\dest);
		\end{scope}
		\end{tikzpicture} \hspace{2em}
		\begin{tikzpicture}[scale=1, auto]
			\begin{scope}	
				\foreach \pos/\name/\type/\charge in {{(0,0)/a/zc/{}}, {(1,0)/b/zc/{}},{(0,-1)/c/zc/{}},{(1,-1)/d/zc/{}}}
					\node[\type] (\name) at \pos {$\charge$}; 
				\foreach \edgetype/\source/ \dest / \weight in {wwedge/a/b/{},pedge/a/c/{},wwedge/d/c/{},pedge/d/b/{}}
				\path[\edgetype] (\source) -- (\dest);
			\end{scope}
			\end{tikzpicture}		
	\]
	\caption{Type-I subgraphs with weight-$2$ edges.}
	\label{typeIfigure}
	\end{figure}
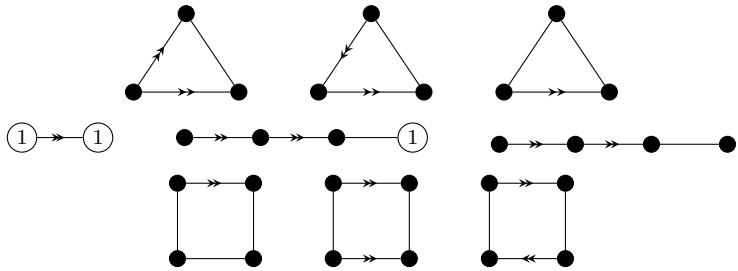

	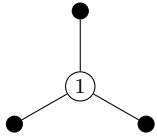
\begin{figure}	
	\[
	\begin{tikzpicture}[scale=1, auto]
		\begin{scope}	
			\foreach \pos/\name/\type/\charge in {{(210:1)/a/zc/{}}, {(330:1)/b/zc/{}},{(90:1)/c/zc/{}},{(0,0)/d/pc/1}}
				\node[\type] (\name) at \pos {$\charge$}; 
			\foreach \edgetype/\source/ \dest / \weight in {pedge/a/d/{},pedge/b/d/{},pedge/c/d/{}}
			\path[\edgetype] (\source) -- node[weight] {$\weight$} (\dest);
		\end{scope}
		\end{tikzpicture}
	\]
	\caption{The type-I subgraph $X_4$.}
	\label{typeIfigureX4}
	\end{figure}

	\begin{figure}[H]
	\[
	\begin{tikzpicture}[yscale=0.8, auto]
		\begin{scope}	
			\foreach \pos/\name/\type/\charge in {{(0,0)/a/pc/1}, {(1,0)/b/nc/1},{(2,0)/a2/pc/1},{(3,0)/b2/zc/{}},{(0,1)/a3/zc/{}},{(1,1)/b3/zc/{}},{(2,1)/c3/zc/{}},{(3,1)/d3/zc/{}}}
				\node[\type] (\name) at \pos {$\charge$}; 
			\foreach \edgetype/\source/ \dest / \weight in {wwedge/a/b/{},wwedge/a2/b2/{},wwedge/b3/c3/{},pedge/a3/b3/{},pedge/c3/d3/{}}
			\path[\edgetype] (\source) -- node[weight] {$\weight$} (\dest);
		\end{scope}
	\end{tikzpicture}\hspace{2em}		
	\begin{tikzpicture}[scale=1, auto]
		\begin{scope}	
			\foreach \pos/\name/\type/\charge in {{(0,0)/a/zc/{}}, {(1,0)/b/zc/{}},{(0,-1)/c/zc/{}},{(1,-1)/d/zc/{}}}
				\node[\type] (\name) at \pos {$\charge$}; 
			\foreach \edgetype/\source/ \dest / \weight in {wwedge/a/b/{},pedge/a/c/{},wwnedge/c/d/{},pedge/d/b/{}}
			\path[\edgetype] (\source) -- node[weight] {$\weight$} (\dest);
		\end{scope}
	\end{tikzpicture}\hspace{2em}	
	\begin{tikzpicture}[scale=1, auto]
		\begin{scope}	
			\foreach \pos/\name/\type/\charge in {{(0,0)/a/zc/{}}, {(1,0)/b/zc/{}},{(0,-1)/c/zc/{}},{(1,-1)/d/zc/{}}}
				\node[\type] (\name) at \pos {$\charge$}; 
			\foreach \edgetype/\source/ \dest / \weight in {wwedge/a/b/{},pedge/a/c/{},nedge/c/d/{},pedge/d/b/{}}
			\path[\edgetype] (\source) -- node[weight] {$\weight$} (\dest);
		\end{scope}
	\end{tikzpicture}
	\]
	\caption{Type-II subgraphs with weight-$2$ edges.}
	\label{typeIIfigure}
	\end{figure}
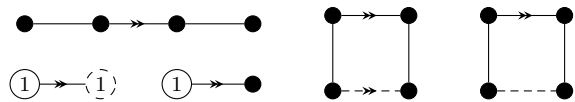
	
	\subsubsection{Included subgraphs}
	By Proposition \ref{pro:unchargedRed}, If $G$ has eleven or more vertices it must be supersporadic, and must therefore induce a subgraph equivalent to $YA_1$, $YA_2$ or $YA_3$ (see Figure \ref{YA_ifigure}). Growing from these seeds gives a smaller set of ten-vertex candidates than considering all possible subgraphs of $S_{14}$ or $S_{16}$. Due to duplication of classes and the cost of equivalence testing, it is not worth growing beyond this point, whilst testing for a $YA_j$ subgraph is also prohibitively expensive.
	
	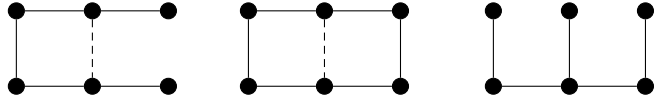
\begin{figure}[H]
	\[
	\begin{tikzpicture}[scale=1, auto]
		\begin{scope}	
			\foreach \pos/\name/\type/\charge in {{(0,0)/a/zc/{}}, {(1,0)/b/zc/{}},{(0,-1)/c/zc/{}},{(1,-1)/d/zc/{}},{(2,0)/e/zc/{}},{(2,-1)/f/zc/{}}}
				\node[\type] (\name) at \pos {$\charge$}; 
			\foreach \edgetype/\source/ \dest / \weight in {pedge/a/b/{},pedge/b/e/{},pedge/a/c/{},pedge/c/d/{},pedge/d/f/{},nedge/b/d/{}}
			\path[\edgetype] (\source) -- node[weight] {$\weight$} (\dest);
		\end{scope}
	\end{tikzpicture}\hspace{2em}
	\begin{tikzpicture}[scale=1, auto]
		\begin{scope}	
			\foreach \pos/\name/\type/\charge in {{(0,0)/a/zc/{}}, {(1,0)/b/zc/{}},{(0,-1)/c/zc/{}},{(1,-1)/d/zc/{}},{(2,0)/e/zc/{}},{(2,-1)/f/zc/{}}}
				\node[\type] (\name) at \pos {$\charge$}; 
			\foreach \edgetype/\source/ \dest / \weight in {pedge/a/b/{},pedge/b/e/{},pedge/a/c/{},pedge/c/d/{},pedge/d/f/{},nedge/b/d/{},pedge/e/f/{}}
			\path[\edgetype] (\source) -- node[weight] {$\weight$} (\dest);
		\end{scope}
	\end{tikzpicture}\hspace{2em}
	\begin{tikzpicture}[scale=1, auto]
		\begin{scope}	
			\foreach \pos/\name/\type/\charge in {{(0,0)/a/zc/{}}, {(1,0)/b/zc/{}},{(0,-1)/c/zc/{}},{(1,-1)/d/zc/{}},{(2,0)/e/zc/{}},{(2,-1)/f/zc/{}}}
				\node[\type] (\name) at \pos {$\charge$}; 
			\foreach \edgetype/\source/ \dest / \weight in {pedge/e/f/{},pedge/a/c/{},pedge/c/d/{},pedge/d/f/{},pedge/b/d/{}}
			\path[\edgetype] (\source) -- node[weight] {$\weight$} (\dest);
		\end{scope}
	\end{tikzpicture}
	\]
	\caption{The subgraphs $YA_1$, $YA_2$, $YA_3$.}
	\label{YA_ifigure}
	\end{figure}
	
	\subsubsection{Supergraphs of $S_{10}$ or $S_{12}$ subgraphs}
	Due to the bound on the degree of each vertex, we need only test the eleven- and ten-vertex subgraphs of $S_{12}$. Testing by $\mathfrak C_n^4(\mathcal{L})$ and $\mathfrak C_n^3(\mathcal{L})$ for uncharged and charged extensions respectively could be optimised by excluded subgraph arguments, but the search space is small enough for this to not matter in practice; the test can be completed in $16.5$ cpuhours.

	\subsubsection{Charged supergraphs of $S_{14}$ or $S_{16}$ subgraphs}
	Let $H$ be an induced subgraph of $S_{14}$ or $S_{16}$, and $G$ the supergraph formed by adding a charged vertex $x$ with addition vector $\mathbf{c}$. For any chance of $G$ to be minimally non-cyclotomic, we may force $\mathbf{c}\in \mathfrak C_n^2(\mathcal{L}_1\cup\{0\})$ with at least one complex edge, as follows:
	\begin{itemize}
	\item as $x$ is charged, we force $c\in \mathfrak C_n^3(\mathcal{L})$;
	\item to exclude $\begin{tikzpicture}[scale=1, auto]
		\begin{scope}	
			\foreach \pos/\name/\type/\charge in {{(0,0)/a/pc/1}, {(1,0)/b/pc/1}}
				\node[\type] (\name) at \pos {$\charge$}; 
			\foreach \edgetype/\source/ \dest / \weight in {wwedge/a/b/{}}
			\path[\edgetype] (\source) -- node[weight] {$\weight$} (\dest);
		\end{scope}
		\end{tikzpicture}$, $\begin{tikzpicture}[scale=1, auto]
		\begin{scope}	
			\foreach \pos/\name/\type/\charge in {{(0,0)/a/pc/1}, {(1,0)/b/nc/1}}
				\node[\type] (\name) at \pos {$\charge$}; 
			\foreach \edgetype/\source/ \dest / \weight in {wwedge/a/b/{}}
			\path[\edgetype] (\source) -- node[weight] {$\weight$} (\dest);
		\end{scope}
		\end{tikzpicture}$ and $\begin{tikzpicture}[scale=1, auto]
		\begin{scope}	
			\foreach \pos/\name/\type/\charge in {{(0,0)/a/pc/1}, {(1,0)/b/zc/{}}}
				\node[\type] (\name) at \pos {$\charge$}; 
			\foreach \edgetype/\source/ \dest / \weight in {wwedge/a/b/{}}
			\path[\edgetype] (\source) -- node[weight] {$\weight$} (\dest);
		\end{scope}
		\end{tikzpicture}$ , we restrict to $\mathfrak C_n^3(\mathcal{L}_1\cup\{0\})$;
	\item if $x$ has degree $4$, then it either induces $X_{4}$ or a triangle with a single charge. So we restrict to  $\mathfrak C_n^2(\mathcal{L}_1\cup\{0\})$.
	\end{itemize}

	Then, on a per-graph basis, we discard any columns which would exceed the degree bound at any vertex, or induce a triangle with $x$ as its only charged vertex. Total compute time is then $28$ cpuhours for $\Z[i]$, $48$ cpuhours for $\Z[\omega]$.		

	\subsubsection{Supergraphs with weight-$2$ edges}
	Let $H$ be an induced subgraph of $S_{14}$ or $S_{16}$, and $G$ the supergraph formed by adding a vertex $x$ with addition vector $\mathbf{c}$, where at least one entry of $\mathbf{c}$ has weight-$2$. We note that $x$ must be uncharged. 

	If $H$ is connected, then if $c$ had both weight-$1$ and weight-$2$ entries, $G$ would induce a subgraph with an isolated weight-$2$ edge, which is either type I or II. So for such $H$, we need only consider $\mathfrak C_n^4(\mathcal{L}_2\cup\{0\})$; for the few disconnected examples, we consider all $\mathbf{c}\in \mathfrak C_n^4(\mathcal{L})$ with a weight-$2$ edge, subject to local weight bounds, and check $G$ for connectedness. This requires $8.5$ cpuhours in total.

	\subsubsection{Remaining cases}
	This leaves only uncharged supergraphs with all edges of weight $1$; by the weight bound, $\mathbf{c}\in \mathfrak C_n^4(\mathcal{L}_1\cup\{0\})$. We may assume $\mathbf{c}$ has at least one complex entry, else we have a $\Z$-graph which is necessarily not minimally non-cyclotomic. Further, for each $H$ we check weight bounds at each vertex, and discard any vector which would induce an uncharged triangle. This represents the bulk of the finite search, requiring $19$ cpudays for $\Z[i]$ and $47$ cpudays for $\Z[\omega]$.


\bibliographystyle{amsalpha}
\bibliography{bib}
\end{document}